\newtheorem{theo}{Theorem}[section]
\newtheorem{theorem}[theo]{Theorem}
\newtheorem{lemma}[theo]{Lemma}
\newtheorem{prop}[theo]{Proposition}
\newtheorem{corollary}[theo]{Corollary}
\newtheorem{fact}[theo]{Fact}
\newtheorem{claim}[theo]{Claim}
\newtheorem{obs}{Observation}
\newtheorem*{Inv}{Invariants}
\newtheorem{dfn}[theo]{Definition}
\newtheorem{rem}[theo]{Remark}
\newcommand{\N}{\ensuremath{\mathbb{N}}} 
\newcommand{\Z}{\ensuremath{\mathbb{Z}}}
\newcommand{\model}{\ensuremath{\mathcal{M}}}
\newcommand{\nodel}{\ensuremath{\mathcal{N}}}
\newcommand{\lang}{\ensuremath{\mathcal{L}}}
\newcommand{\set}[1]{\{#1 \}}
\newcommand{\tp}{\textnormal{tp}}
\newcommand{\qftp}{\textnormal{qftp}}
\newcommand{\res}{\textnormal{res}}
\newcommand{\U}{\ensuremath{\mathcal{U}}}
\newcommand{\clos}{\rangle_{\{+,-,\frac{\cdot}{n}\}}}
\newenvironment{claimproof}[1]{\par\noindent\emph{Proof:}\space#1}{\hfill $\square_{Claim}$ \bigskip}
\let\strokeL\L
\DeclareRobustCommand{\L}{\ifmmode\mathbf{L}\else\strokeL\fi}
\title{Limit Semigroups with 2 Generators}
\author{Estrada, Felipe \and Onshuus, Alf \and Rincón, David}
\begin{document}

\nocite{*}
\maketitle

The object of this paper is to study numerical semigroups, by which we mean cofinite submonoids of $(\N, +, <, 0)$. In particular, we will study numerical semigroups with two generators. Each of these is quite well understood, with known and simple formulas for the Frobenius number and the genus. The initial idea of this project was proposed by Tristram Bogart at a joint Combinatorics-Model Theory seminar at Universidad de los Andes: he suggested that if one looked at the limit theory of randomly chosen semigroups, it was possible that some of the ``noise'' present in small finite cases would be removed and one would have an idea of a common theory for randomly selected numerical semigroups. 

This paper is about the study of limit semigroups with 2 generators. As we will show in this paper, there is no unique limit theory, since instead of ``removing the noise'', in order to understand the limit theory of semigroups one needs to find proofs that work uniformly in all (or most) of the semigroups one is studying. So our study points out invariants that govern the structure of numerical semigroups generated by two elements. As such, it may be useful in order to pinpoint the different cases one would need to consider when studying general numerical semigroups. 

\medskip

There are various ways to suggest what ``random'' would mean. First, notice that given two elements $a, b\in \mathbb N$, the monoid generated by $(a/d, b/d)$, where $d=(a,b)$ is the greatest common divisor of $a$ and $b$, is a numerical semigroup.  Since there is no way to endow $\mathbb N$ with a uniform non zero measure, what one can do is for each $N\in \mathbb N$ take random (with the uniform measure) elements $a_N,b_N\leq N$, consider the semigroup generated by $\frac{a_N}{(a_N, b_N)}$ and $ \frac{b_N}{(a_N, b_N)}$ and understand what the ``limit semigroup'' would look like as $N$ goes to infinity.

The idea of ``limit semigroup'' (or limit theory) is where first order logic (and model theory) comes into play, as it has in various applications in limit combinatorics: Ultralimits. The advantage of ultralimits is that once we fix an ultraproduct $\mathcal U$ over $\N$, ultralimits allow us to define, given any sequence $S_i$ of ordered monoids, an ordered monoid which will be the limit of the sequence $S_i$.

As ordered monoids, any semigroup is naturally a structure with a binary function $+$ and a binary relation $<$. An \emph{ultrafilter} over $\N$ is a set $\mathcal U$ of subsets of $\N$ closed under supersets and intersections, with the property that for any subset $X\subseteq \N$ we have $X\in\mathcal U$ if and only if $\N\setminus X\not\in \mathcal U$. Another way of understanding $\mathcal U$ is that $\mathbb N$ admits $0-1$ additive measures; an ultrafilter is just the sets of measure $1$. 

Once an ultrafilter $\mathcal U$ is fixed, for any given monoids $S_i$, one can define a monoid $S_{\mathcal U}:=\Pi_{i\in N} S_i/\mathcal U$ with the property (\L o\'s' Theorem) that any first-order sentence $\phi$ will be true in $S_{\mathcal U}$ if and only if
\[
\{i\mid \phi \text{ is true in }S_i\}\in \mathcal U.
\]
As we shall see, in the first section, if every $S_i$ is a semigroup generated by two elements, then $S_{\mathcal U}$ behaves very much as what one would expect an ``infinite semigroup with two generators'' to behave. Furthermore, if we choose  $S_i$ to be a random semigroup which results after choosing random elements $a,b$ in the interval $[2,i]$ then  $S_{\mathcal U}$ will no longer be a numerical semigroup (it will not be a subset of $\N$ nor will it be cofinite) but it will have all the properties shared by the majority (in term of $\U$) of the semigroups $S_i$. Studying these objects (which obviously depend on the semigroups $S_i$ and on the ultrafilter $\mathcal U$) will shed light on what the structure of a random semigroup with two generators might look like. We will call any such object a \emph{limit semigroup}. This paper studies the properties of limit semigroups, with questions such as what is the structure of limit semigroups, what are the first order properties shared by all numerical semigroups which are inherited by the limit semigroups, whether the theory is definable, and what the axioms and invariants might be.

\bigskip

From now on, we will refer to semigroups generated by two elements as 2-semigroups. We will abuse notation and sometimes use this term to refer to both the combinatorial (standard) semigroups generated by two coprime elements $a,b\in \mathbb N$, and to the ultralimits of these. When confusion might arise, we will refer to the ones generated by finite elements as standard semigroups, and to the ultralimits as limit 2-semigroups. 

\bigskip

The paper is divided as follows: In Section \ref{Section 1} we will recall some of the known properties of standard 2-semigroups and prove that they can be stated by first order sentences, which implies that they will hold in any limit 2-semigroup. We will then give an axiom sketch of the common (first order) theory of all 2-semigroups (both standard and by \L o\'s' Theorem of all limit 2-semigroups as well). 

This common theory is not complete: To start with, given a prime number $p\in \N$, both the congruence modulo $p$ of the generators $a$ and $b$ will be coded by first order theories, which already gives continuum many possibilities for the possible theories of limit 2-semigroups. In Section \ref{Section invariants} we find invariants which we know are needed to classify all possible theories of all limit 2-semigroups. 

We then start studying the theory of the limit semigroups, and consider the first steps towards the main model theoretic tool we will use to prove completeness in some cases: quantifier elimination. We will do this in Section \ref{Section: model theoretic}. We will also point out why, even though (standard) 2-semigroups are well understood and in general the first order theory of numerical semigroups is complete (the first order theory of any finite structure is always complete), understanding the first order theory of any limit 2-semigroup entails model theoretic questions that are still not known.

We will then understand when these invariants are enough to get a full classification of the theories. It will depend on whether or not once we fix these invariants and the axioms they entail, we get a complete theory (one where all first order sentences true in the limit 2-semigroups are proved by the axioms). In Section \ref{Section quantifier elimination} we give a necessary condition for this to hold. We will then exhibit examples of classes of theories of limit 2-semigroups where the resulting theory is indeed complete. 

Finally, in Section \ref{Section: conclusions} we will exhibit conditions under which the limit theory of certain numerical 2-semigroups exists and is well understood.

\section{(Standard) Numerical Semigroups}\label{Section 1}

In this section, we exhibit the common theory of standard and (by \L o\'s' Theorem) limit 2-semigroups.

Let $\lang = \set{+, <, 0}$ be the language of ordered monoids. From now on, we will only work in the first order theory with the fixed language $\mathcal L$.

\subsection{Known results in standard 2-semigroups and the corresponding first order sentences}

\begin{dfn}
    Let $S$ be a standard numerical semigroup. The smallest nonzero element of $S$ is called the \emph{multiplicity} of the semigroup.  The greatest natural number not in $S$ is called the \emph{Frobenius number} of $S$. The least number $c$ in $S$ such that every natural number greater than or equal to $c$ belongs to $S$ is called the \emph{conductor} of $S$.
    
    A set of natural numbers is called a \emph{set of generators} for the semigroup if every element of the semigroup can be written as a finite $\N$-linear combination of generators.
\end{dfn}

\begin{fact}
    Every standard numerical semigroup has a minimal finite set of generators, which is unique. In a standard semigroup with 2 generators, the conductor $c$ of the semigroup relates to the generators $a,b$ by $c=ab-a-b+1$ (\cite{Gar1}).
\end{fact}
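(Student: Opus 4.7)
The plan is to prove the two assertions separately, both being classical. For uniqueness and finiteness of the minimal generating set, I would define an element $s\in S\setminus\{0\}$ to be \emph{irreducible} if it cannot be written as $s_1+s_2$ with $s_1,s_2\in S\setminus\{0\}$, and let $I(S)$ denote the set of irreducible elements. First I would show $I(S)$ is contained in every generating set: if $G$ generates $S$ and $s\in I(S)$, then $s=\sum n_i g_i$ with $g_i\in G$ and $n_i\in\N$, and irreducibility forces exactly one $n_i$ to equal $1$ with the rest zero, so $s\in G$. Next I would show $I(S)$ generates $S$ by strong induction on elements of $S\setminus\{0\}$: an element is either irreducible or a sum of two strictly smaller nonzero elements of $S$. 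Combined, this shows the minimal generating set exists, equals $I(S)$, and is unique. Finiteness follows by noting that if $m$ is the multiplicity, then each residue class modulo $m$ contains at most one irreducible element (the smallest element of $S$ in that class), so $|I(S)|\leq m$.

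For the conductor formula in a 2-semigroup generated by coprime $a,b$, I would prove the classical Sylvester--Frobenius result that $F(a,b)=ab-a-b$, from which $c=F+1=ab-a-b+1$ is immediate. The first half is to show $ab-a-b\notin \langle a,b\rangle$: assuming $ab-a-b=na+mb$ with $n,m\in\N$ rearranges to $ab=(n+1)a+(m+1)b$, so $b\mid(n+1)a$, and coprimality forces $b\mid n+1$, hence $(n+1)a\geq ab$, making $(m+1)b\leq 0$, a contradiction.

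The second half is to show every $N>ab-a-b$ lies in $\langle a,b\rangle$. Since $\gcd(a,b)=1$, the residues $0,a,2a,\dots,(b-1)a$ are distinct modulo $b$, so I can write $N=ka+mb$ with $0\leq k\leq b-1$ and $m\in\Z$. Then
\[
mb=N-ka>(ab-a-b)-(b-1)a=-b,
\]
forcing $m>-1$, i.e.\ $m\geq 0$, so $N\in\langle a,b\rangle$.

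I do not expect any real obstacle: both statements are standard and the argument is elementary. The only care required is in the ``uniqueness'' direction of the first part, where one must use irreducibility to rule out nontrivial decompositions, and in the second part, where the bound $N>ab-a-b$ must be matched exactly against the range $0\leq k\leq b-1$ to conclude $m\geq 0$. Since the paper cites \cite{Gar1} and labels this as a \emph{fact}, a short proof along these lines (or simply an appeal to the reference) is all that is needed before moving on to the first-order axiomatization that follows.
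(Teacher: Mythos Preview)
Your proof is correct and follows the standard classical arguments. The paper itself gives no proof of this statement at all: it is labeled a \emph{Fact} and simply cited to \cite{Gar1}, so there is nothing to compare against. Your write-up therefore supplies more than the paper does; for the paper's purposes a bare citation suffices, but your argument is sound and would serve as a self-contained justification.
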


The coprimality of the generators on a 2-semigroup allows us to uniformly define some \emph{notable elements and predicates} using only the language $\lang$, so that they will also be definable in any ultraproduct of 2-semigroups.

\begin{prop}\label{basic facts}
Let $S$ be a standard ordered 2-semigroup with generators $0<a<b$.
    \begin{enumerate}

        \item The generators $a,b$ of $S$, the conductor $c$ of $S$, and $ab$ are uniformly definable constants. The conductor $c$ is the least element such that whenever $x, y$ are such that $x+c \leq y$, then there is a $z$ such that $x+z = y$.
        
        \item There are definable predicates $M_a(x), M_b(x)$ which hold if and only if $x=ab$, or $x < ab$ and $x$ is a multiple of $a$ or $b$, respectively. $M_a(x)\wedge M_b(x)$ implies $x=0\vee x=ab$.

        \item (Unique Decomposition) Every element $s$ of $S$ can be uniquely written as $s=m(ab)+ m_aa + m_bb$ with $m, m_a, m_b\in \mathbb N$ and $m_a <b, m_b < a$.
        
        On the set $[0, ab)$ the functions $\pi_a, \pi_b$ mapping $s\in S$ to its respective $m_aa, m_bb$ are uniformly definable.

        \item The set of all $x$ such that $\exists y(x = ny)$ is definable and is the domain of the ``division by $n$'' function. 

        \item The set of all pairs $(x,y)$ such that $\exists z (y + z = x)$ is definable and is the domain of subtraction. 
        
        \item The functions $\alpha$ and $\beta$ that map an element $s$ in $[0, ab]$ to the greatest element less than or equal to $s$ which satisfies $M_a$, and to the smallest element greater than or equal to $s$ which satisfies $M_b$, respectively, are definable. In particular, $x-\alpha(x)$ is the residue of $x$ modulo $a$, and $\alpha(ab)=\beta(ab)=ab$. 

        \item For all natural numbers $n,r$, with $r<n$, there exists a definable predicate $R_{n,r}$ such that for any $s\in S$, $s\equiv r~(\text{mod n})$ if and only if $R_{n,r}(s)$ holds.

        \item There are definable constants $\alpha_1,\beta_1$ such that $\alpha_1$ is the least (and unique) element in $S$ such that $M_a(\alpha_1),M_b(\beta_1)$ holds, and $\beta_1=\alpha_1+1$. In particular, the residue of $\beta_1$ modulo $a$ is 1.

        \item For any two elements $x,y$, and any natural number $r$ such that $r|(x-y)$ and $\frac{x-y}{r}<a$, there exist $\beta$ a multiple of $b$ such that the residue of $\beta$ modulo $a$ is $\frac{x-y}{r}$. In particular, $\frac{x-y}{r}=\beta-\alpha(\beta)$, where $\alpha(\beta)$ is the greatest multiple of $a$ less than $\beta$.
    \end{enumerate}
\end{prop}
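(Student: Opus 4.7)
The plan is to verify the nine clauses in sequence, each by exhibiting a first-order $\mathcal{L}$-formula, leaning on coprimality of $a, b$ and the explicit formula $c = ab - a - b + 1$. For clause~(1) I would define $a$ as the least positive element, and $b$ as the least positive element not of the form $y + a$ for $y \in S$: coprimality forces $b - a \notin S$, while every multiple of $a$ strictly between $a$ and $b$ is reachable from a smaller such multiple by adding $a$. The conductor $c$ is given its stated universal characterisation verbatim, and then $ab$ is pinned down as the unique $S$-predecessor of $c + a + b$, using the identity $ab + 1 = c + a + b$ and the fact that both elements lie above $c$ where $S$ agrees with $\mathbb{N}$.

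For clause~(2) I set $M_a(x) := x = ab \vee (x < ab \wedge \neg\exists y(y+b=x))$ and symmetrically $M_b$; correctness follows from the unique decomposition $x = \alpha a + \beta b$, since $\beta \geq 1$ lets one subtract $b$ while remaining in $S$, whereas $\beta = 0$ with $\alpha < b$ forces $\alpha a - b \notin S$ by a short CRT argument. The implication $M_a(x) \wedge M_b(x) \Rightarrow x \in \{0, ab\}$ is then immediate because $ab$ is the least positive common multiple of $a$ and $b$. Clause~(3) defines $\pi_a(s)$ as the unique $y$ with $M_a(y) \wedge \exists z(M_b(z) \wedge y + z = s)$, uniqueness coming from unique decomposition. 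Clauses~(4) and~(5) are already in first-order form, and clause~(6) realises $\alpha, \beta$ as argmax and argmin over the $M_a$- and $M_b$-definable sets; the identification $x - \alpha(x) = x \bmod a$ holds because $\alpha(x)$ is by construction the greatest multiple of $a$ in $[0, ab]$ not exceeding $x$.

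The main obstacle is clause~(7). The formula $\exists y(x = ny)$ of clause~(4) only detects multiples of $n$ whose quotient already lies in $S$, missing the small multiples. I plan a two-step shift: first, $n \mid s$ holds iff $\exists y(s + nc = ny)$, since $s + nc \geq nc$ forces the quotient into the cofinite tail $[c, \infty) \subseteq S$. Next, on that tail the $S$-successor $\sigma_S(x) := \min\{y \in S : y > x\}$, definable by $y > x \wedge \neg\exists z(x < z < y)$, coincides with the $\mathbb{N}$-successor, so ``$\beta = \alpha + k$'' in $\mathbb{N}$ (for each fixed $k$) becomes first-order whenever $\alpha, \beta \geq c$, via a chain of $k$ applications of $\sigma_S$. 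Then $R_{n,r}(s)$ is defined by requiring that $s + nc$ be reached from some multiple of $n$ by exactly $r$ applications of $\sigma_S$; adding $nc$ preserves the residue mod $n$, so this characterises $s \equiv r \pmod n$ uniformly on all of $S$.

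Clause~(8) reuses the successor trick: ``$\beta_1 = \alpha_1 + 1$'' is encoded as $\sigma_S(\alpha_1 + c) = \beta_1 + c$, with uniqueness of $\alpha_1$ following from the Chinese Remainder Theorem for the coprime pair $(a, b)$, since the congruences $\alpha \equiv 0 \pmod a$ and $\alpha + 1 \equiv 0 \pmod b$ admit a unique solution in $[0, ab)$. Clause~(9) is then purely combinatorial: the $a$ multiples $\{0, b, 2b, \ldots, (a-1)b\}$ of $b$ have residues modulo $a$ that by coprimality exhaust $\{0, 1, \ldots, a-1\}$, so for any target residue $(x - y)/r < a$ exactly one of these multiples realises it, and clause~(6) identifies that residue with $\beta - \alpha(\beta)$.
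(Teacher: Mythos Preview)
Your proposal is correct and follows essentially the same route as the paper. The only cosmetic differences are: for $ab$ you use the identity $ab = c+a+b-1$ and take a predecessor, whereas the paper defines $ab$ as the least element simultaneously of the form $y+a$ and $z+b$; and for clause~(8) you shift by $c$ before applying the successor, while the paper observes directly that $\alpha_1,\beta_1$ are already $S$-consecutive (being $\mathbb N$-adjacent) and characterises $\alpha_1$ as the least multiple of $a$ whose $S$-successor lies in $M_b$. Your treatment of clause~(7) via the $nc$-shift and iterated successor is exactly the paper's argument.
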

\begin{proof}
Some of these are immediate. We outline why the others hold.
    \begin{enumerate}
        \item In any semigroup $S$, the smallest generator $a$ of a semigroup is its least nonzero element. The next generator, $b$, is the least element which is not a multiple of $a$, that is to say the least nonzero element that cannot be written as $x + a$ for some $x$. These properties can be expressed as first-order formulas.

        For defining the conductor $c$, note that if two elements $x, y$ are such that $x+c \leq y$, then there must be a $z$ such that $x+z = y$ (i.e., the element $y-x$ belongs to the semigroup because $y-x \geq c$). Further, the formula $$\phi(c') := \forall x,y ((y \geq c' + x) \Rightarrow \exists z (y = z + x ))$$
        is satisfied if and only if $c' \geq c$. so that $c$ may be defined as the least element which satisfies $\phi$.

        The constant $ab$ is the least element that can be written both as $y+a$ and $z+b$ for some elements $y, z$ in $S$.

        \item Let $M_a(x):= x = ab \lor \forall y \ \neg (y+b=x)$ and let $M_b(x):= x = ab \lor \forall y \ \neg (y+a=x)$.

        \item By definition any $s \in S$ may be written as $k_a a + k_b b$ for some $k_a, k_b \in \N$. Write $k_a$ as $n_a b + m_a$ with $m_a$ the residue of $k_a$ modulo $b$ (so $m_a<b$) and similarly let $n_b$ and $m_b$ be such that $k_b = n_b a + m_b$ with $m_b < a$. Now, $s = (n_a + n_b)ab + m_aa + m_bb$ which proves the existence of the decomposition.

        For uniqueness, suppose there is a second representation $s = m'ab + m_a'a + m_b'b$. Then $m_aa = m_a'a$ (mod $b$) and since $a,b$ are coprime, $m_a = m_a'$ (mod $b$). Both $m_a$ and $m_a'$ are smaller than $b$ so they must be equal. By an analogous argument $m_b = m_b'$, which implies $m = m'$.

        \item [7.] For any element $s\in S$, we have $s\equiv r~(\text{mod $n$})$ if and only if $nc+s\equiv r~(\text{mod $n$})$. Such congruence holds if and only if there is $t\in S$ such that $nt=nc+s+(n-r)$ because any natural number greater than or equal to $c$ is in $S$. For the same reason, the element $nc+s+(n-r)$ is the $(n-r)$-th successor of the element $nc+s$ in $S$ so the existence of $t$ can be expressed as a first-order formula depending on $r$ and $n$.

        \item [8.] The set $\{0,b,2b,\dots,(a-1)b\}$ is a complete system of residues modulo $a$, so one and only one of them must be congruent to $1$ modulo $a$. Say $\beta_1=nb\equiv 1~(\text{mod $a$})$ with $n<a$. The element $nb$ is the minimal element of $S$ in its residue class modulo $a$, and since $a|nb-1$, we have that $\alpha_1=nb-1\in S$. Hence, $\alpha_1$ is the least multiple of $a$ such that its successor $\beta_1$ is a multiple of $b$ and the distance between any elements $x,y\in S$ is greater than or equal to the distance between $\alpha_1$ and $\beta_1$. All this can be expressed in first order. 

        \item [9.] This is immediate from 8 by considering the elements $\frac{x-y}{r}\alpha_1$ and $\frac{x-y}{r}\beta_1$.
    \end{enumerate}
\end{proof}

\begin{rem}\label{alphas and betas functions}
 For any $x\in M_b$ we have   $\beta(\alpha(x))=x$. It is not true, however, that $\alpha(\beta(x))=x$ for any $x\in M_a$.
\end{rem}

\begin{rem}\label{alphas and betas}
    Analogous to (8) of the above proposition, for every natural number $k<b$ there are minimal elements $\alpha_k,\beta_k$ in $S$, multiples of $a$ and $b$, respectively, such that $\beta_k-k=\alpha_k$. Moreover, since $k\cdot \alpha_1 \equiv k~(\text{mod $b$})$, it follows that $\alpha_k=k\cdot \alpha_1-nab$ and $\beta_k=k\cdot \beta_1-nab$, for some $n\in\N$ with $n<k$.
\end{rem}

\begin{dfn}
    Let $S$ be a 2-semigroup. By definition (Proposition \ref{basic facts}(2)), $M_a(S)$, and $M_b(S)$ are the sets of all multiples of $a$ less than or equal to $ab$, and all multiples of $b$ less than or equal to $ab$, respectively (equivalently, all realizations of the formulas $M_a(x)$ and $M_b(x)$ in $S$). 
    
    We extend this definition and define $M_a^*(S)$ to be the set (not definable in first order) of all elements of $S$ with unique decomposition whose $b$-component is $0$, and $M_b^*(S)$ analogously.
\end{dfn}


\begin{rem}\label{alphafunction}
    It follows from the definition of $\alpha$ (Proposition \ref{basic facts}(6)) that for any $x$ with unique decomposition $x=nab+x_a+x_b$, the greatest element $y$ of $M_a^*(\model)$ such that $y\leq x$ is $y=nab+x_a+\alpha(x_b)$. Analogously for the $\beta$ function. It's worth noting that in general it is not true that $\alpha(x_1+x_2)=\alpha(x_1)+\alpha(x_2)$, but we do have $\alpha(x_1+x_2)=\alpha(x_1)+\alpha(x_2) +\epsilon a$, where $\epsilon$ is either $0$ or $1$, depending on how close $x_1$ and $x_2$ are to the previous multiple of $a$. A similar result applies for the $\beta$ function as well.
\end{rem}





\subsection{First Order Common Theory of 2-semigroups}

We will refer to the theory $Pr=Th(\N,+,<,0,1)$ as Presburger arithmetic. $Pr$ is the theory of ordered semigroups with one generator, and so it is to be expected that 2-semigroups share some of its structure with $Pr$ models. We recall the axiomatization of Presburger arithmetic.

\begin{dfn}
    \fussy Let $\lang=\{+,<,0,1,(D_n)_{n\in\omega}\}$ be the language consisting of a binary operation $+$, a binary relation $<$, constants $0,1$, and unary predicates $(D_n)_{n\in\omega}$. Presburger arithmetic is the theory defined by the following axioms:
    \begin{enumerate}
        \item Ordered abelian semigroup axioms.
        \item $0<1$
        \item $\forall x[x=0\lor x\geq 1]$
        \item $\forall x,y \left[ y\geq x\Rightarrow \exists z (y=z+x)\right]$. This $z$ is unique and we shall refer to it as $y-x$.
        \item $$\left\{\forall x \left[D_n(x)\iff \exists y \left(x=\underbrace{y+\dots+y}_{\textrm{$n$ times}}\right)\right]\right\}_{n\in \mathbb N}$$
        \item $$\left\{\forall x\left[\bigvee_{i=0}^{n-1}\left(D_n(x+\underbrace{1+\dots+1}_{\textrm{$i$ times}})\land\bigwedge_{j\neq i}\neg D_n(x+\underbrace{1+\dots+1}_{\textrm{$j$ times}})\right)\right]\right\}_{n\in \mathbb N}$$
    \end{enumerate}  
\end{dfn}

\begin{fact}\label{Presburger}
    Presburger arithmetic is a complete, decidable theory with quantifier elimination. See \cite{Mar}.
\end{fact}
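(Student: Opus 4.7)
The plan is to establish quantifier elimination first, from which completeness and decidability will follow in a routine way. For QE, I would use the standard reduction: it suffices to show that for any conjunction $\phi(x, \bar{y})$ of literals, the formula $\exists x\, \phi(x, \bar{y})$ is equivalent to a quantifier-free one. The literals take the shapes $nx + s = t$, $nx + s < t$, $t < nx + s$, $D_m(nx + s)$, and their negations; axiom (6) lets me rewrite $\neg D_m(u)$ as a disjunction of positive divisibility predicates, and axiom (4) allows negated (in)equalities to be expressed positively, so the real task is to eliminate $x$ from a conjunction of positive atomic literals.

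The heart of the argument is a normalization step. I would multiply through so that every occurrence of $x$ has the same coefficient $N$, introduce a fresh variable $y = Nx$ together with the constraint $D_N(y)$, and replace $\exists x$ with $\exists y$. The problem becomes the elimination of $y$ from a conjunction of strict bounds $s_j(\bar y) < y < t_k(\bar y)$ and divisibility conditions $D_{m_\ell}(y + u_\ell(\bar y))$. The crucial observation is that the conjunction of divisibility conditions is periodic in $y$ with period $M := \operatorname{lcm}(m_1, \dots, m_k, N)$, so it pins down the residue class of $y$ modulo $M$. Hence $\exists y\,\phi$ reduces to a finite disjunction indexed by residues modulo $M$: either some bound is absent on one side (every residue class is hit), or the interval $(s_j, t_k)$ has length exceeding $M$ (again, every residue class is hit), or the interval is short and a bounded residue check applies, expressible quantifier-free using the predicates $D_M$ and numeral shifts $\underline{0}, \underline{1}, \dots, \underline{M-1}$.

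Once QE is in hand, completeness and decidability come together. Every sentence is equivalent to a quantifier-free sentence built from the numerals $\underline{0}, \underline{1}, \underline{2}, \dots$; the truth in $\N$ of any such sentence is decidable by direct computation, and the axioms force every model of Presburger arithmetic to agree on these values. Hence any two models satisfy the same sentences, and the procedure that reduces an input sentence to a quantifier-free one and then evaluates it gives a decision algorithm.

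The main obstacle is purely the bookkeeping in the elimination step: carefully tracking the divisibility residues, the common multiple $N$, and the bounded/unbounded cases, while ensuring the output remains quantifier-free in the chosen language. The technique is entirely classical, and a detailed execution is given in Marker's text \cite{Mar} as cited in the statement of the fact.
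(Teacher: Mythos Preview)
The paper does not give its own proof of this statement: it records the result as a \emph{Fact} and simply refers the reader to \cite{Mar}. Your sketch is a correct outline of the classical Presburger elimination argument (normalize coefficients, replace $Nx$ by a fresh variable constrained by $D_N$, exploit periodicity modulo the lcm of the moduli, and case-split on whether the interval of bounds is long enough), and it is exactly the line of argument developed in the reference the paper cites; completeness and decidability then follow from QE together with the computable prime substructure, as you say.
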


\begin{dfn}
Consider a non standard model $\mathcal Z$ of Presburger arithmetic, and let $d\in \mathcal Z$ be an infinite element (meaning larger than any finite addition of 1's). Consider the substructure $\mathcal Z_d$ of $\mathcal Z$ with universe $[0,d]:=\{x\in \mathcal Z \mid 0\leq x\leq d\}$, together with the order, the predicates $D_n$ and the restricted addition, so that addition is only defined on the set $\{(x,y)\mid x\leq d-y\}$. We define the resulting theory as \emph{bounded Presburger}.

Its axioms are the same as those of Presburger except for the restriction of addition to the set described above, the fact that $d$ is the maximum element, and that $d$ is larger than any finite addition of 1's. 

We also need to replace Axiom $6_n$ of Presburger with two different axioms:

$$\left\{\forall x\left[\bigvee_{i=0}^{n-1}\left(D_n(x+\underbrace{1+\dots+1}_{\textrm{$i$ times}})\land\bigwedge_{j\neq i}\neg D_n(x+\underbrace{1+\dots+1}_{\textrm{$j$ times}})\right)\right]\right\}_{n\in \mathbb N}$$ for all $x$ such that  $x+\underbrace{1+\dots+1}_{\textrm{$n$ times}})\leq d$ and 
$$\left\{\forall x\left[\bigvee_{i=0}^{n-1}\left(D_n(x\underbrace{-1-\dots-1}_{\textrm{$i$ times}})\land\bigwedge_{j\neq i}\neg D_n(x\underbrace{-1-\dots-1}_{\textrm{$j$ times}})\right)\right]\right\}_{n\in \mathbb N}$$ for all $x$ such that  $x\underbrace{-1-\dots-1}_{\textrm{$n$ times}})\geq 0$.
\end{dfn}

The following is a slight variation of Proposition 4.8 in \cite{Dario}:

\begin{fact}\label{Garcia}
   Let $\mathcal Z$ be a model of bounded Presburger. Then for any $A\subset \mathcal Z_a$ the type $\tp^{\mathcal Z_a}(A)$ is implied by $\qftp(A,a)$. 
\end{fact}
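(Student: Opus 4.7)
The plan is to reduce the statement to the well-known quantifier elimination of full Presburger arithmetic (Fact~\ref{Presburger}). The key idea is to translate formulas of bounded Presburger, whose quantifiers are automatically restricted to the interval $[0,a]$, into formulas of full Presburger in which $a$ appears as an explicit parameter and all quantifiers are explicitly relativized to $[0,a]$.

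More precisely, I would first observe that $\mathcal Z_a$ sits inside an ambient model $\mathcal Z^+$ of Presburger arithmetic; in the setup of the definition immediately preceding the fact, $\mathcal Z^+$ is already provided as the full Presburger model from which $\mathcal Z_a$ was cut out. Then, by induction on the complexity of a formula $\phi(\bar x)$ in the language of bounded Presburger, I would construct a translation $\phi(\bar x) \mapsto \phi^*(\bar x, y)$ into full Presburger with the property that
$$
\mathcal Z_a \models \phi(\bar A) \iff \mathcal Z^+ \models \phi^*(\bar A, a)
$$
for every tuple $\bar A \subseteq [0, a]$. Atomic formulas are translated essentially verbatim, but each compound arithmetic subterm is unravelled into a sequence of binary sums with side-conditions $\leq y$ conjoined at every step, so that the partiality of addition in bounded Presburger is captured faithfully; Boolean connectives pass through unchanged; existential quantifiers are relativized via $\exists z\,\phi \mapsto \exists z\,(0 \leq z \leq y \wedge \phi^*)$.

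Once this translation is in place, Fact~\ref{Presburger} applied inside $\mathcal Z^+$ yields a quantifier-free formula $\psi(\bar x, y)$ of Presburger equivalent to $\phi^*(\bar x, y)$. Hence $\phi(\bar A)$ holds in $\mathcal Z_a$ iff $\psi(\bar A, a)$ holds in $\mathcal Z^+$, and the latter is determined by the quantifier-free Presburger type of $(A, a)$. Since all elements involved lie in $[0, a]$ and the two languages agree on atomic information about such tuples once the explicit bound-checks built into the translation are taken into account, this Presburger quantifier-free type is in fact determined by $\qftp(A, a)$ as computed in $\mathcal Z_a$. Combining, $\qftp(A, a)$ implies $\tp^{\mathcal Z_a}(A)$, as desired.

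The main obstacle is the careful bookkeeping required in the translation step: because addition in $\mathcal Z_a$ is only partial, a bounded-Presburger term like $x_1 + x_2 + x_3$ must be rewritten using auxiliary existential variables asserting that each intermediate binary sum is defined (i.e.\ lies in $[0, a]$), and these side-conditions have to be threaded through the entire formula for the translation to be faithful. Once this is handled, the rest follows routinely from Presburger's quantifier elimination and a standard comparison of quantifier-free types across the two languages.
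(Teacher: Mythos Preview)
Your proposal is correct and is precisely the ``alternative'' route the paper itself indicates: the paper's primary argument simply cites Proposition~4.8 of \cite{Dario} for the quantifier-free-equivalent structure $([0,a),+_a,<)$, and then remarks that one may instead ``repeat the proof in \cite{Dario} reducing the problem to elimination of quantifiers in $\mathcal Z$''---which is exactly your relativization-and-translation argument. One small point worth tightening: you assume an ambient full Presburger model $\mathcal Z^+$ is already given, which is literally true in the paper's definitional setup but, for an arbitrary model of the bounded-Presburger axioms, requires a short compactness argument to produce; once that is noted, your translation plus Fact~\ref{Presburger} finishes the job as you describe.
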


\begin{proof}
 In \cite{Dario} this was proved for the structure $([0,a), +_a, <)$ where $+_a$ is addition modulo $a$. Clearly both structures are quantifier free equivalent. Alternatively, one can repeat the proof in \cite{Dario} reducing the problem to elimination of quantifiers in $\mathcal Z$. 
\end{proof}

\begin{rem}\label{presburger}
    In a standard 2-semigroup, the structures $(M_b(S),+,<,0,b)$ and $([0,a],+,<,0,1)$ are isomorphic (via multiplication by $b$). Hence, we can extend the language to include divisibility predicates in $M_b(S)$, so that the resulting structure is a segment of a model of $Pr$. Such divisibility predicates allow one to consider natural residue predicates $R_{n,r}^b$ in $M_b(S)$. An analogous analysis works for $(M_a(S),+,<,0,a)$.
        
    Formally, for $x\in M_a(S)$, $R_{n,r}^a(x)$ holds if and only if $x=na'+ra$ for some $a'\in M_a(S)$, and similarly,  
    $R_{n,r}^b(x)$ holds for $x\in M_b(S)$ if and only if $x=nb'+rb$ for some $b'\in M_b(S)$.
    
   (Notice also that the structures $(M_a^*(S),+,<,0,a,(R_{n,0}^a)_{n\in\omega})$ and \\ $(M_b^*(S),+,<,0,b, (R_{n,0}^b)_{n\in\omega})$ are models of Presburger arithmetic.)
\end{rem}

In light of the remark above, we shall make the following definition.

\begin{dfn}
    Let $\model$ be a 2-semigroup generated by $0<a<b$. For $x\in M_a^*(\model)$, we define the $M_a$-residues of $x$ to be $\tp^\model(x)\vert_{\{R_{n,r}^a\}}$. For $x\in M_b^*(\model)$, we define the $M_b$-residues of $x$ to be $\tp^\model(x)\vert_{\{R_{n,r}^b\}}$. For $x\in\model$, we define the $R$-residue type of $x$ to be $\tp^\model(x)\vert_{\{R_{n,r}\}}$.
\end{dfn}

\medskip

When it helps clean up the notation, we will work with the residue modulo $n$ as a function and use $\res_n(x)=r$ instead of $R_{n,r}(x)$. Similarly for $\res_n^a(x)$ and $\res_n^b(x)$.

\medskip

We will refer to all such predicates satisfied by an element as the residue type of the element, and allow the distinction between the $M_a$-residue, $M_b$-residue or $M$-residue, referring to the predicates $R_{n,r}^a, R_{n,r}^b$, and $R_{n,r}$, respectively. For an element $x\in M_a^*(\model)$, we will refer as its $M_a$-residue modulo $n$ to the number $i$ such that $R_{n,i}^a(x)$ holds in $\model$. Similarly for the $M_b$-residue modulo $n$ of an element $x\in M_b^*(\model)$.

\begin{dfn}
    For every $n\in\omega$, we define the partial function $\pi_a^n$ on every element $x\in[nab,(n+1)ab)$ as the function that retrieves the $a$-component of $x$. That is, if $x=m(ab)+x_a+x_b$ is the unique decomposition of $x$, $\pi_a^n(x):=x_a$. Similarly, we define $\pi_b^n(x):=x_b$. Such definitions are clearly expressible in first order.
\end{dfn}

For convenience, we will extend the language $\lang$ to include all previous predicates and functions:

\begin{dfn}
    The augmented language $\hat{\lang}\supseteq \lang$ denotes the language of ordered monoids with constant symbols $\{a,b,\alpha_1, \beta_1, ab\}$, unary function symbols $\alpha, \beta, \pi_a, \pi_b$, unary predicates $(R_{n,r})_{n,r\in \omega}, (R_{n,r}^a)_{n,r\in \omega}$ and $(R_{n,r}^b)_{n,r\in \omega}$. We refer to the predicates $R_{n,r}$, $R_{n,r}^a$, $R_{n,r}^b$ as \emph{residue predicates}. 
    
    It will also be convenient to add symbols for division by $n$ and a difference operator restricted to $R_{n,0}^a(M_a)$ and $R_{n,0}^b(M_b)$. So we will add unary functions $(\frac{\cdot}{n})_{n \in \omega}$ and a binary function $-$ whose domains are $R_{n,0}^a(M_a)\cup R_{n,0}^b(M_b)$ and 
    $\{(x,y)\in M_a \mid x<y\}\cup \{(x,y)\in M_b \mid x<y\}$, respectively.
 \end{dfn}

In the following statements, as in the rest of the paper, $n(x)$ and $r(x)$ are just abbreviations for adding $x$ to itself $n$ and $r$ times, respectively.

\begin{dfn}\label{dfn axioms}
    Let $\hat{\lang}$ be the extended language as defined above. For convenience, for $i\in\Z$, let $s^i(x)$ denote the $i$-th successor (or predecessor if $i$ is negative) of $x$ (axioms will be included for the existence of such elements). The $\hat{\lang}$-theory $T_{ons}$ of 2-semigroups with generators $0<a<b$ is axiomatized by the following axioms:
\begin{enumerate}
    \item Axioms of discretely ordered abelian semigroup.
    \item Definition of $a,b,\alpha_1,\beta_1$, and $ab$ constants (as per Proposition \ref{basic facts}).
    \item Properties of the conductor $c$: $ab+1=c+a+b$ and any two numbers whos distance is greater than $c$ are subtractable (where adding $1$ just denotes applying to successor function once).
    \item Existence of $a$-predecessor or $b$-predecessor for any positive element:
    $$\forall x>0 \exists y[y+a=x \lor y+b=x].$$

    \item Axioms of bounded Presburger arithmetic on the sets $M_a$, $M_b$ of multiples of $a$ and $b$,  replacing $D_n$ for $R_{n,0}^a$ and $R_{n,0}^b$, respectively. Additionally, the behavior of the auxiliary symbols $\frac{\cdot}{n}, R_{n,r}^a$ and $R_{n,r}^b$. This is:

 For every $n,i,j\in \N$, we have   \begin{itemize} 
    \item The domain of  $\frac{\cdot}{n}$ is $R_{n,0}^a(M_a)\cup R_{n,0}^b(M_b)$.  
    \item $n(\frac{x}{n})=x$ for any $x$ in $R_{n,0}^a(M_a)\cup R_{n,0}^b(M_b)$.
  \item $\forall x [R_{n,r}^a(x)\iff (M_a(x)\land \exists y(M_a(y)\land x=n(y)+r(a)))].$
    \item $\forall x [R_{n,r}^b(x)\iff (M_b(x)\land \exists y(M_b(y)\land x=n(y)+r(b)))].$
\end{itemize}
    
    \item Definition of $-$, including its domain.
    \item Any interval of the form $[x, y)$, where $y \geq x+a$, contains a multiple of $a$ and any interval of the form $[x, y)$, where $y \geq x+b$, contains a multiple of $b$:
    $$\forall x \exists y [x+a<ab\Rightarrow (M_a(y)\land x\leq y< x+a]$$
    $$\forall x \exists y [x+b<ab\Rightarrow (M_b(y)\land x\leq y< x+b]$$
    
    \item Unique decomposition of any element $x$ such that $x-ab$ is not defined, into a sum of a multiple of $a$ and a multiple of $b$.
    $$\forall x\exists y \exists z[\neg \exists w (w+ab=x) \Rightarrow (M_a(y)\land M_b(z)\land x=y+z)].$$
     \item Definition of $\alpha,\beta,\pi_a$, and $\pi_b$ partial functions (as per Proposition \ref{basic facts}).

    \item Behavior of $R_{n,r}$ predicates on the constants. 
 \begin{itemize}
\item $(R_{n,0}(a) \Rightarrow \neg R_{n,0}(b))\land (R_{n,0}(b)\Rightarrow \neg R_{n,0}(a))$ for all $n\in\N$.
 \item $(R_{n,i}(a)\land R_{n,j}(b))\Rightarrow R_{n,ij}(ab)$ for all $n\in\N$.
   \item $\forall x[ x\geq c \Rightarrow s^1(x)=x+\beta_1-\alpha_1]$.
 \item $R_{n, i}^a(ab)\iff R_{n, i}(b)$.
  \item $R_{n, i}^b(ab)\iff R_{n, i}(a)$.
 \end{itemize}

    \item For each $n,r\in \N$ we add 
    $\forall x [R_{n,r}(x)\iff \exists y(n(ab)+x=n(y)+r)]$ where adding $r$ just means iterating the successor function $r$ times. 
     
\end{enumerate}

    \end{dfn}

\begin{rem}\label{implication of residues}
    Notice that \[\forall x [M_a(x) \wedge R_{n,i}^a(x)\wedge R_{n,j}(a)\Rightarrow R_{n,ij}(x)]\] and 
    \[\forall x [M_b(x) \wedge R_{n,i}^b(x)\wedge R_{n,j}(b)\Rightarrow R_{n,ij}(x)]\] both follow from the divisibility (by $n$) conditions implied by  $R_{n,i}, R_{n,i}^a$ and $R_{n,i}^b$.
\end{rem}

\begin{prop}\label{Predicates Behavior}
  Addition, uniqueness of the residue, and the Chinese Remainder Theorem all hold for $R_{n,r}^a, R_{n,r}^b$ and $R_{n,r}$. That is to say, the following hold,   

\begin{enumerate}
    \item  For all $n\in \mathbb N$ we have $R_{n, i}(x)\wedge R_{n, j}(y)\Rightarrow R_{n, i+j} (x+y)$ for any $x,y$.
\item     $\forall x\left[ x\geq c \Rightarrow \bigvee_{i=0}^{n-1}\left(R_{n,0}(s^i(x))\land \bigwedge_{j\neq i}\neg R_{n,0}(s^j(x))\right)\right]$
 \item    $\forall x[(R_{n,i}(x)\land R_{m,j}(x))\Rightarrow R_{nm,k}(x)]$
    for $n,m$ relatively prime, and where $k$ is the unique solution modulo $nm$ to the congruences $x\equiv_n i,x\equiv_m j$
\end{enumerate}
as well as the corresponding statements for the $M_a$- and $M_b$-residues.

\end{prop}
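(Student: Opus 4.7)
All three assertions derive by unpacking the defining axiom
\[ R_{n,r}(x) \iff \exists y\,(n(ab)+x = n(y)+r) \]
(item 11 of Definition \ref{dfn axioms}) together with its analogues for $R^a_{n,r}$ and $R^b_{n,r}$, and then manipulating the resulting equations in the natural-number-like region above the conductor.

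For additivity (1), I would extract witnesses $y_1,y_2\geq ab$ from the two hypotheses, add the defining equations to get $2n(ab)+(x+y)=n(y_1+y_2)+(i+j)$, and then split into cases: if $i+j<n$, subtract $n(ab)$ from both sides (valid since $y_1+y_2\geq 2ab$) and use the distributive identity $n(y_1+y_2)-n(ab)=n(y_1+y_2-ab)$ to exhibit $y_1+y_2-ab$ as the witness for $R_{n,i+j}(x+y)$; if $i+j=n+k$ with $0\leq k<n$, absorb the overflow of $n$ successors into the scaled term via $n(y_1+y_2)+n(1)=n(y_1+y_2+1)$ to obtain $R_{n,k}(x+y)$.

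For part (2), uniqueness is a one-line cancellation: if both $R_{n,0}(s^{i_1}(x))$ and $R_{n,0}(s^{i_2}(x))$ hold with $0\leq i_1<i_2<n$, the corresponding witnesses $y_1,y_2$ force $n(y_2-y_1)=i_2-i_1<n$, so $y_2-y_1=0$, contradicting $i_1\neq i_2$. Existence (the main obstacle) is more delicate: given $x\geq c$, I would use axiom 7 to place a multiple of $a$, say $M$, inside $[n(ab)+x,\,n(ab)+x+a)$, then apply the bounded Presburger uniqueness axiom on $M_a^*$ (Remark \ref{presburger}) to locate the unique multiple of $na$ in $\{M, M+a,\ldots, M+(n-1)a\}$; any such multiple of $na$ automatically realizes $R_{n,0}$ in $S$, producing some $i'\in[0,na)$ with $R_{n,0}(s^{i'}(x))$. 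Finally, since $n$-multiples above the conductor form an arithmetic progression of common difference $n$ (via axiom 10 and because the relevant witnesses $z$ satisfy $z\geq ab>c$), reducing $i'$ modulo $n$ by successively subtracting $n$ from the witness yields the sought $i\in[0,n)$. The subtle point is reconciling the two notions of $n$-divisibility, especially when $\gcd(n,a)>1$.

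For CRT (3), set $x':=x+nm(ab)\geq c$; part (1), combined with $R_{n,0}(nm(ab))$ and $R_{m,0}(nm(ab))$ (both immediate from the defining axiom, with witnesses $(m+1)ab$ and $(n+1)ab$), upgrades the hypotheses to $R_{n,i}(x')$ and $R_{m,j}(x')$. Part (2) then yields a unique $k'\in[0,nm)$ with $R_{nm,k'}(x')$. Writing $k'=nq+r$ with $r\in[0,n)$ and rewriting $nm(ab)+x'=nm(z)+k'$ as $n(ab)+x'=n(mz+q-(m-1)ab)+r$ (valid because $z\geq 2ab$ forces $mz+q\geq(m-1)ab$) gives $R_{n,r}(x')$; uniqueness then forces $r=i$, so $k'\equiv i\pmod n$, and symmetrically $k'\equiv j\pmod m$. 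Classical CRT on integers pins down $k'=k$; pulling back from $x'$ to $x$ via the same pattern of (1) gives $R_{nm,k}(x)$. The $M_a$- and $M_b$-residue analogues are immediate from Remark \ref{presburger}: $(M_a^*,+,<,0,a,(R^a_{n,0})_n)$ and its $b$-counterpart are Presburger models, in which additivity, uniqueness, and CRT are classical theorems.
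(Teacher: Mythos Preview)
Your argument is essentially correct, and the $M_a$/$M_b$ cases are handled exactly as in the paper (reduction to Presburger).  For the $R_{n,r}$ predicates, however, your route is considerably more laborious than the paper's.  The paper isolates a single lemma: $R_{n,i}(x)$ holds if and only if $R_{n,i}(n(ab)+x)$ holds.  Once this is established, every element may be replaced by one from which $n(ab)$ can be subtracted, and for such elements $R_{n,i}(x)$ coincides with the ordinary statement ``$x=n(w)+i$ for some $w\in S$''.  All three items then follow verbatim from the classical proofs for Presburger arithmetic, with no case analysis on overflow, no detour through $M_a^*$, and no need to reduce an index in $[0,na)$ down to $[0,n)$.

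Your approach works, but the ``subtle point'' you flag in part (2) deserves a word: it is not in fact an obstruction.  A multiple of $na$ in the $M_a^*$-sense is literally of the form $n(ak)$, hence $n(ab)+n(ak)=n(ab+ak)$ exhibits the required witness for $R_{n,0}$ regardless of $\gcd(n,a)$; and the reduction of $i'\in[0,na)$ to $i'\bmod n$ succeeds because all elements in sight lie above the conductor, where the successor has step $1$ and subtracting $n$ from the witness is legitimate.  So your worry dissolves, but only after checking these details --- whereas the paper's shift-by-$n(ab)$ lemma sidesteps the issue entirely and makes the whole proposition a two-line reduction.
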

\begin{proof}
The proofs for the $M_a$- and $M_b$-residues follow from the definitions of the predicates given in axiom $5.$  of $T_{ons}$, following the standard proofs (as found for example in \cite{NiZu}). 

Since the definition of $R_{n,i}$ is not standard (since we had to move past the conductor to ensure divisibility) the standard proofs have to be modified as follows:

For all $x$, $R_{n,i}(x)$ holds if and only if $R_{n,i}(nab+x)$ holds: Assume $nab+(nab+x)=ny+i$ for some $y$. We may assume $x>i$, so that in particular $y>2ab$, which implies that $ab+w=y$ for some $w$ (this is the defining property of the conductor). It follows that 
    $nab+x=nw+i$. For the converse, notice that $nab+x=nw+i$ always implies $nab+nab+x=n(w+ab)+i$.

It follows that we can prove any of the statements replacing $x$ by $nab+x$. However, the statements above show that for any element $x$, if $nab$ can be subtracted from $x$, then $R_{n,i}(x)$ holds if and only if $x=nw+i$ for some $w\in S$. So for these elements, $R_{n,i}$ implies the standard definition of residues and everything can be proved as in Chapter 3 Section 1 in \cite{Mar}.  
\end{proof}

\begin{rem}\label{incompleteness}
    Any standard 2-semigroup is a model of $T_{ons}$ with the canonical interpretations of the constants, predicates and functions, as per Proposition \ref{basic facts} and Remark \ref{presburger}. Hence, $T_{ons}$ is not a complete theory, as different generators may have different residue predicates.
\end{rem}

\begin{rem}\label{r-residue determined}
    In a 2-semigroup $S$, any element $x\in S$ with unique decomposition has its $R$-residue type completely determined by the $M_a$-residue type of $\pi_a(x)$, the $M_b$-residue type of $\pi_b(x)$, and the $R$-residue types of $a$ and $b$. This is a direct consequence of Remark \ref{implication of residues} and Proposition \ref{Predicates Behavior}.
\end{rem}

The following theorem will be quite useful.

\begin{theo}\label{Axiom 12}
    Let $M$ be a $\hat{\mathcal L}$-structure satisfying conditions 1 through 10 in Definition \ref{dfn axioms} and let $n\in \N$. Then for any $\beta\in M_b$, and $\alpha\in M_a$  such that $res_n(\beta)=\res_n(\alpha)$, there is some $\beta'\in M_b$ and $\alpha'\in M_a$ such that 
       \[
       \beta-\alpha=n(\beta'-\alpha').
       \]

       In particular, if we assume additionally that $\beta-\alpha<na$ then $\beta-\alpha=n(\beta'-\alpha(\beta'))$ for some $\beta'$.
\end{theo}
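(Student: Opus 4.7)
My plan is to prove the ``In particular'' statement first and then deduce the general case from it. The key construction, in both cases, is to find, for a prescribed target residue modulo $a$, a multiple $\beta'$ of $b$ in $M_b$ realizing that residue; the companion $\alpha'\in M_a$ is then determined by $\beta'$. Throughout, the informal equation $\beta-\alpha=n(\beta'-\alpha')$ is to be read as the semigroup identity $\beta+n\alpha'=\alpha+n\beta'$, since subtraction across $M_a$ and $M_b$ is not a primitive of $\hat{\lang}$.

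For the ``In particular'' case $\beta-\alpha<na$, the sub-claim I would establish is that the ``residue map'' $g\colon M_b\setminus\{ab\}\to [0,a)$, $g(\beta^*):=\beta^*-\alpha(\beta^*)$ (where $\alpha$ is the function of axiom 9 and the value is understood as an abstract residue witnessed by the semigroup identity $\alpha(\beta^*)+g(\beta^*)=\beta^*$), is a bijection. Injectivity: if $g(\beta_1^*)=g(\beta_2^*)$ with $\beta_1^*<\beta_2^*$, then $\beta_2^*-\beta_1^*$ is simultaneously a positive multiple of $a$ (difference of $\alpha$-values) and a positive multiple of $b$; by the defining property of $ab$ as the least common positive multiple of $a$ and $b$ (axiom 2, via Proposition \ref{basic facts}(1)), this forces $\beta_2^*-\beta_1^*\geq ab$, contradicting that both lie in $[0,ab)$. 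Surjectivity follows from cardinality: both $M_b\setminus\{ab\}$ and $[0,a)$ have size $a$ in the bounded Presburger structure of axiom 5, so an injection is a bijection. The target for the ``In particular'' case is the quotient $k:=(\beta-\alpha)/n$, which by the hypothesis and size bound sits in $[0,a)$; the inverse of $g$ applied to $k$ delivers $\beta'$, and $\alpha':=\alpha(\beta')$ yields the required identity.

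For the general statement, let $k:=(\beta-\alpha)/n$ and let $r\in[0,a)$ be its residue modulo $a$. If $r=0$, set $\beta':=ab$ and $\alpha':=ab-k$, which is a non-negative multiple of $a$ since $k\leq ab$. If $r>0$, apply the sub-claim to obtain $\beta'\in M_b\setminus\{0,ab\}$ with $g(\beta')=r$, and define $\alpha':=\beta'-k$. Then $\alpha'$ is a multiple of $a$ (from $\beta'\equiv r\equiv k\pmod{a}$), and its non-negativity $\beta'\geq k$ follows from an index-level analysis: writing $\beta=ib$ and $\beta'=jb$, the residue hypothesis combined with axiom 10 gives $i\equiv nj\pmod{a}$, and the constraints $i\leq a$, $\alpha\geq 0$ together force $\beta'\geq k$, with the only edge case $i>nj$ requiring $j=0$ and reducing to the trivial subcase $\beta=\alpha$.

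The hardest step is to carry out the injectivity-plus-cardinality argument for $g$ entirely within the abstract axiomatic framework. In a standard 2-semigroup it is immediate from $\gcd(a,b)=1$; in a general $\hat{\lang}$-model of axioms 1--10, one has only standard-modulus coprimality from axiom 10, so injectivity must instead be derived from the LCM-characterization of $ab$ in axiom 2, and the cardinality comparison must be executed in the bounded Presburger structure of $M_b$ (axiom 5), assisted by the interval-filling property of axiom 7 and the auxiliary constants $\alpha_1,\beta_1$ from axiom 2. I expect this to go through, but the argument requires care regarding possible non-standard divisibilities.
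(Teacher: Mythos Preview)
Your approach has a genuine gap at the step you yourself flag as the hardest: the surjectivity of the residue map $g\colon M_b\setminus\{ab\}\to[0,a)$ via a cardinality argument. The pigeonhole principle ``an injection between sets of the same size is a bijection'' is valid only for \emph{finite} sets. The theorem is stated for an arbitrary $\hat{\mathcal L}$-structure satisfying axioms 1--10, and the motivating case is precisely the limit 2-semigroup, where $a$ is non-standard and both $M_b\setminus\{ab\}$ and the set of distances $[0,a)$ are infinite. Bounded Presburger (axiom 5) gives you division and residue predicates internal to $M_b$, but it does not give you any pigeonhole-type principle linking $M_b$ to the external set of distances; indeed definable injections in Presburger-like structures need not be surjections (think of $x\mapsto 2x$). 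So the argument, as sketched, does not go through, and there is no obvious repair from axioms 1--10 alone.

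A second, related issue: you repeatedly treat $k=(\beta-\alpha)/n$ as a well-defined object whose residue modulo $a$ you may compute. But $\beta-\alpha$ is not an element of the semigroup, and the assertion that $(\beta-\alpha)/n$ can be represented as a difference $\beta'-\alpha'$ with $\beta'\in M_b$, $\alpha'\in M_a$ is exactly the conclusion of the theorem. Your argument in the general case (computing $r$, finding $\beta'$ with $g(\beta')=r$, then setting $\alpha':=\beta'-k$) presupposes an ambient arithmetic in which $k$ and its residue live; this is available in an ultraproduct but not in an abstract model of axioms 1--10.

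The paper's proof sidesteps both problems. It never needs surjectivity of $g$. Instead it reduces to $n=p$ prime, uses axiom 10 to assume $p\nmid b$, and then shifts $\alpha$ and $\beta$ by the same multiple $k(ab)$ so that both $\alpha+k(ab)$ and $\beta+k(ab)$ become divisible by $p$ in the $M_a$- and $M_b$-senses respectively; bounded Presburger on $M_a$ and on $M_b$ (axiom 5) then supplies the $p$-th roots $y_\alpha\in M_a$ and $x_\beta\in M_b$, and $\beta-\alpha=p(x_\beta-y_\alpha)$. The ``in particular'' clause is then a one-line consequence of the main statement, not the other way round.
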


\begin{proof}
  To prove the first item we need to show that $\bigcup_{n\in \N} n(ab)+M_a$ is a model of Presburger. More precisely, we need: 
  \begin{claim}
Let $s:=\res_n^a(ab)= \res_n(b)$, let $\alpha\in M_a$ with $\res_n^a(\alpha)=r$ and let $k\in \N$ with $k<n$ be such that $n|r+ks$. Then there is some $y\in M_a$ such that $n(y)=\alpha+k(ab)$.

Similarly, if  $s:= \res_n^b(ab)= \res_n(a)$, $\beta\in M_b$ with $ \res_n^b(\beta)=r$ and $k\in \N, k<n$ is such that $n|r+ks$, then there is some $x\in M_b$ such that $n(x)=\beta+k(ab)$.
  \end{claim}
\begin{claimproof}
    Because $M_a$ is a model of bounded Presburger, we know that there is some $y_\alpha\in M_a$ such that $n(y_\alpha)=\alpha-r(a)$. Similarly, there is some $y_{ab}$ such that $n(y_{ab})=ab-s(a)$. 
So
    \[
    n(y_\alpha+k(y_{ab}))=\alpha-r(a)+k(ab)-ks(a)=\alpha+k(ab)-(r+ks)(a).
    \]

Since $n$ divides $r+ks$ by hypothesis, $\frac{r+ks}{n}\in \N$ and $y:=y_\alpha+k(y_{ab})+\frac{r+ks}{n}(a)$ will satisfy the first statement in the claim.

The second statement is analogous.
\end{claimproof}

We will now prove the first item, so assume that $\alpha\in M_a$, $\beta\in M_b$ are such that $\res_n(\beta)=\res_n(\alpha)$. By induction (or well ordering of $\N$), it is enough to prove the statement for $n=p$ prime.

By condition 10 from Definition \ref{dfn axioms}, $p$ cannot divide both $a$ and $b$. Assume that $p$ does not divide $b$ (the other case is analogous). Since $ \res_p^a(ab)= \res_p(b)\neq 0$, we know that if $r=\res_p^a(\alpha)$ and $s=\res_p^a(ab)$ there is some $k<p$ such that $p| r+ks$. By the claim, there is some $y_\alpha\in M_a$ such that $p(y_\alpha)=\alpha+k(ab)$. 

This implies $R_{p,0}(\alpha+k(ab))$ and by hypothesis $R_{p,0}(\beta+k(ab))$. Since $p\not| b$ we have $\neg R_{p,0}(b)$ which by definition and the claim above (as in Remark \ref{implication of residues}) implies $R_{p,0}^b(\beta+k(ab))$. So there is some $x_\beta\in M_b$ such that $p(x_\beta)=\beta+k(ab)$. By construction, $\beta-\alpha=p(x_\beta-y_\alpha)$, as required.

\medskip

For the ``in particular'' statement, if $\beta-\alpha<na$ and $\beta-\alpha=n(\beta'-\alpha')$ then $\beta'-\alpha'<a$ and by definition $\alpha(\beta')=\alpha'$.
\end{proof}

\begin{rem}
If $v-w=\frac{x-y}{n}$ then $n(v)+y=n(w)+x$ and the $R$-residue modulo $n$ of $x$ and $y$ are the same. So the above theorem is really an equivalence.\end{rem}

\section{Theories of Limit 2-Semigroups and their invariants}\label{Section invariants}

Now we formally define limit 2-semigroups, as we wish to explore their theory.

\begin{dfn}
    Let $\mathcal{U}$ be a non-principal ultrafilter on $\N$ and let $(S_i)_{i \in \N}$ be a sequence of standard 2-semigroups. A limit ordered numerical 2-semigroup (henceforth limit 2-semigroup) is the ultraproduct $S := \prod_{\mathcal{U}} S_i$.
    
    If $S$ is isomorphic to a standard semigroup, we call it trivial. Henceforth we shall assume all limit 2-semigroups are nontrivial.
\end{dfn}
\begin{dfn}\label{limit semigroup}
    If $S = \prod_{\mathcal{U}} S_i$ is a limit 2-semigroup, and each $S_i$ is generated by $(a_i, b_i)$, then we call $a := [(a_i)_{i\in \N}]_\mathcal{U}$ and $b:= [(b_i)_{i\in \N}]_\mathcal{U}$ the \emph{generators} of $S$.
\end{dfn}

\begin{rem}
    The generators of a limit 2-semigroup are not generators in the classical sense: not all elements of $S$ can be written as $\N$-linear combinations of $a$ and $b$.
\end{rem}

We will restrict ourselves to 2-semigroups with non-finite generators. This is of course consistent with the idea mentioned in the introduction of taking the limits of 2-semigroups $S_n$, where $S_n$ is a semigroup obtained by choosing two random integers in $[1, n]$, dividing both by their greatest common divisor, and taking the lesser of the resulting two numbers as the generator $a_n$ of $S_n$ and the greater as $b_n$, and then taking their ultralimit. 

So we will add an axiom implying that $a$ is not finite, meaning that it is greater than any finite multiple of the least possible non-zero distance between elements of the semigroup (since $a<b$, the fact that $b$ is not finite follows). More precisely:

\begin{dfn}
    The $\hat{\lang}$-theory $T_{lons}$ is the extension of $T_{ons}$ that includes the axioms
        $$a+k\alpha_1> k\beta_1~~~\text{(for all natural numbers}~k)$$
     \end{dfn}

\subsection{Ratios and the order type of the constants}\label{Invariants}
Since the language $\hat{\lang}$ includes an order relation, as well as $+$, $-$, and $\frac{\cdot}{n}$, any complete theory will need to decide the order type of the elements generated by the constants.

Let us consider first the closure of the constants $a,b,ab$ under the operations $C_0:=\langle a, b, ab\rangle_{\{+,-,\frac{\cdot}{n}\}}$. Then $M_a(C_0)$ is the union of $\{n(a)\}_{n\in \N^\times}$,  $\left\{\frac{ab-res_n^a(ab)}{n}+k(a)\right\}_{n\in \N^\times, k\in \Z}$ and $\{ab-n(a)\}_{n\in \N}$, and similarly for $M_b(C_0)$. The order within each $M_a(C_0)$ and $M_b(C_0)$ is set by the fact that $ab>n(b)$ for all $n\in \N$ (this follows from the definition of conductor, unique decomposition of elements under $ab$ and the fact that $a$ is infinite). This fact also implies that the order in 
\[C_0=\bigcup_{n\in \N} n(ab)+M_a(C_0)+M_b(C_0)\] will be implied by the order in $M_a(C_0)+M_b(C_0)$ which is determined by understanding when $n(a)<m(b)$ for $m,n\in \N$ or more precisely, by the set
\[
\left\{\frac{m}{n}\mid n(a)<m(b) \right\}.
\]

This prompts the following definition.

\begin{dfn}
    Let $S$ be a limit 2-semigroup, and let $x,x',y,y'$ be $\emptyset$-definable elements of $S$, such that $x\geq x'$ and $y>y'$. We define the ratio between $x-x'$, and $y-y'$, as the real number (or infinity)
    $$r(x-x',y-y'):=\sup\left\{\frac{p}{q}~\bigg|~p,q\in\Z^{\geq 0}, q\neq 0, S\models py+qx'\leq qx+py'\right\}$$
    When $x-x'=x_0\in S$, we will abbreviate such ratio as $r(x_0,y-y')$. Analogously if $y-y'$ exists.
\end{dfn}

The ratio between $x-x'$ and $y-y'$ can be thought of as the supremum of the rational numbers $\frac{p}{q}$ such that $\frac{p}{q}\leq \frac{x-x'}{y-y'}$ (regardless of whether $x-x'$ and $y-y'$ are defined).

Of course, $r(x-x', y-y')$ is not a formula. However, $r_{p/q}(x-x', y-y'):=py+qx'\leq qx+py'$ is. This implies for example that if in some structure $S$ we have $r(b-\alpha(b),a)=1/\pi$, then 
\[
S\models p(b)\leq q(a)+p(\alpha(b))
\]
if and only if $p/q<1/\pi$,
which implies that in any structure $M$ satisfying the same theory as $S$, the generators $a_M$ and $b_M$ in $M$ also satisfy 
\[
M \models p(b_M)\leq q(a_M)+p(\alpha(b_M))
\]
if and only if $p/q<1/\pi$, so that $r(\alpha(b_M)-b_M, a_M)=1/\pi$. In this sense, the ratios of definable elements are invariants of the theory of a limit 2-semigroup. 

\medskip

With this notation, the fact that the order restricted to $C_0$ is determined by $\left\{\frac{m}{n}\mid n(a)<m(b) \right\}$ implies that we need to fix $r(a,b)$ in order to determine the complete theory. Conversely, if $r(a,b)$ is 0, or irrational, then $r(a,b)$ uniquely determines $\left\{\frac{m}{n}\mid n(a)<m(b) \right\}$ which in turn implies the order in $C_0$. We let $q_0:=r(a,b)$.

\bigskip

If $q_0= 0$, then $\alpha(b)\not\in M_a(C_0)$ (in fact, $q_0= 0$ if and only if $\alpha(b)\not\in M_a(C_0)$). In order to close under $\alpha$ we need to add at least $\alpha(b)$. Turns out that $\langle a, b, ab\rangle_{\{+,-,\frac{\cdot}{n}, \alpha\}}=\langle a, b, ab, \alpha(b)\rangle_{\{+,-,\frac{\cdot}{n}\}}$: For any $\beta_1, \beta_2\in M_b(C_0)$ we have that either $\alpha(\beta_1+\beta_2)=\alpha(\beta_1)+\alpha(\beta_2)$ or  $\alpha(\beta_1+\beta_2)=\alpha(\beta_1)+\alpha(\beta_2)+a$ depending only on whether or not 
\[
\beta_1+\beta_2-(\alpha(\beta_1)+\alpha(\beta_2))<a
\]
or not, which in turn is determined uniquely by \[\left\{\frac{p}{q}~\bigg|~p,q\in\N, q\neq 0, \  p(\beta_1)\leq q(a)+p(\alpha(\beta_1))\right\}\] and \[\left\{\frac{p}{q}~\bigg|~p,q\in\N, q\neq 0, \  p(\beta_2)\leq q(a)+p(\alpha(\beta_2))\right\}.\] 

This implies two things. First, that $\langle a, b, ab\rangle_{\{+,-,\frac{\cdot}{n}, \alpha\}}=\langle a, b, ab, \alpha(b)\rangle_{\{+,-,\frac{\cdot}{n}\}}$. And second, that the structure and order in $\langle a, b, ab\rangle_{\{+,-,\frac{\cdot}{n}, \alpha\}}$ is determined uniquely by $\left\{\frac{p}{q}~\bigg|~p,q\in\N, q\neq 0, \  p(b)\leq q(a)+p(\alpha(b))\right\}$ which, will in turn be determined uniquely by $q_1:=r(b-\alpha(b),a)$ if this happens to be irrational or zero.

\bigskip

Finally, we will also need to study the order between definable elements once we include the constant $\beta_1$. If $n(b)-\alpha(n(b))=m(\beta_1-\alpha_1)$ for some $n,m\in \N$, then $m(\beta_1-\alpha_1)\in \langle a, b, ab\rangle_{\{+,-,\frac{\cdot}{n}, \alpha\}}$ and by Theorem \ref{Axiom 12} and unique decomposition, $\beta_1\in \langle a, b, ab\rangle_{\{+,-,\frac{\cdot}{n}, \alpha\}}$. This is in fact a necessary and sufficient condition, so if $n(b)-\alpha(n(b))$ is never equal to $m(\beta_1-\alpha_1)$, we need to add $\beta_1$ in order to understand the theory on the constants.

Notice that since $\beta_1$ and $ab$ are in $M_b$, the fact that $n(\beta_1)$ is subtractable or not from $m(ab)$ depends on whether or not $n(\beta_1)<m(ab)$. So we need to understand $\left\{\frac{p}{q}~\bigg|~p,q\in\N, q\neq 0, \  p(\beta_1)\leq q(ab)\right\}$ which will be completely implied by $r(\beta_1, ab)$ if the later is irrational (see Remark \ref{betas ratios}).

\medskip

To summarize, in order to get a complete theory we need to add to $T_{lons}$ the following invariants. Any combination of these will give rise (if it is consistent) to a different theory of a limit 2-semigroup.

\begin{Inv}\label{all invariants}
Consider the following.
\begin{itemize}
\item All the congruences of $a,b$ modulo any $p^n$ with $p,n \in \N$ and $p$ prime. These congruences must of course be compatible with the assumption that $gcd(a,b)=1$. Similarly, we will need the $b$-congruences of $\beta_1$ and the $a$-congruences of $\alpha(b)$ and $\alpha_1$.

\item Statements of the form $ma<nb$. If $q_0:=r(a,b)$ is irrational or zero, these are implied by the axiom scheme implying $r(a, b)=q_0$.

\item If $q_0\neq 0$ we need to add either $n(b)-\alpha(n(b))=m(\beta_1-\alpha_1)$ if this holds for some $n,m\in \N\setminus\{0\}$, or all the statements $m(b)<n(\alpha(b))+n(a)$ that hold in the structure with $m,n\in \N$. Once again, these are implied by the axiom scheme $r(b-\alpha(b), a)=q_1$ if $q_1:=r(b-\alpha(b), a)$ is irrational.

\item If $n(b)-\alpha(n(b))\neq m(\beta_1-\alpha_1)$ for all $m,n\in \N^\times$, then we need to add all statements $m(\beta_1)<n(ab)$. If $q_2:=r(\beta_1, ab)$ is irrational or zero, this is implied by the axiom scheme $r(\beta_1, ab)=q_2$.
\end{itemize}
\end{Inv}

\begin{rem}\label{betas ratios}
    For $n\in\N$, the element $\beta_n$ is defined as the unique multiple of $b$ that is $n$ units greater than a multiple of $a$, where a unit is the least possible nonzero distance between two elements of the semigroup (Proposition \ref{basic facts} and Remark \ref{alphas and betas}). When $q_2=r(\beta_1,ab)$ takes an irrational value, the ratio $r(\beta_n,ab)$ is the decimal part of the number $nq_2$, as $\beta_n=n(\beta_1)-k_n(ab)$, where $k_n(ab)<n\beta_1<(k_n+1)(ab)$. It's worth noting that for rational values of $q_2$, $r(\beta_n,ab)$ is not necessarily the decimal part of $nq_2$, since for instance $q_2=1/2$ can happen with $2(\beta_1)<ab$ (which results in $r(\beta_2,ab)=1$) or with $2(\beta_1)>ab$ (which results in $r(\beta_2,ab)=0$).
\end{rem}

\subsection{Compatibility of invariants.}
Not all choices of invariants are compatible between them. For example, if $q_0\neq 0$ and $n$ is such that $1/n\leq q_0<1/(n+1)$, either $\alpha(b)=n(a)$ or $b<n(a)$, $\alpha(b)=(n-1)(a)$ and $q_0=1/n$. 

In the former case $q_1=r(b-na, a)=1/q_0-n$ and in the latter $q_1=1$. Either way $q_1\neq 0$ whenever $q_0\neq 0$ is different from $1/n$ for some $n\in \N$ which in particular implies that $b\neq \beta_1$ whenever $q_0$ is irrational. 

Similarly, if $mb=\beta_n$ for some $m,n\in \N$ then either $q_0=0$ or $q_0=1/k$ for some $k$.

\medskip

In this subsection we will address whether there are other restrictions for choices of the invariants to be witnessed by limit $2$-semigroups. For simplicity, we will only focus on the compatibility of congruences and the different ratios $q_0, q_1, q_2$. We will also pay close attention at the case where $mb=\beta_n$ for some $m,n\in \Z$ (which implies $q_2=k/n$ for $k$ as in Theorem \ref{Axiom 12} with $\beta=\beta_n$ and $\alpha=\alpha_n$), since this is the only case where we will be able to prove a complete axiomatization of the theory.

\medskip

\L o\'s' Theorem implies that if we fix some invariants as in Subsection \ref{Invariants}, namely, congruences for $a$ and $b$, $M_a$-congruences for $\alpha_1$ and $\alpha(b)$, the $M_b$-congruence of $\beta_1$, and the ratios $q_0, q_1, q_2$ of $r(a,b), r(b-\alpha(b), a)$ and $r(\beta_1, ab)$, the existence of a limit 2-semigroup with these ratios happens if and only if we can approximate the values by finite 2-semigroups.

Namely, whether for any fixed $n\in \mathbb N$ and any $i_0, i_1, i_2$ less than $n$, any complete set of congruences for $a,b$ and $\alpha_1$ modulo $n$ together with the statements $\frac{i_0}{n}\leq \frac{a}{b}<\frac{i_0+1}{n}$, $\frac{i_1}{n}\leq \frac{b-\alpha(b)}{a}<\frac{i_1+1}{n}$ and $\frac{i_2}{n}\leq \frac{\beta_1}{ab}<\frac{i_2+1}{n}$ can be realized in a \emph{finite} 2-semigroup. This is of course a question about standard semigroups, and we will not be able to answer it in this paper, but we will give some evidence about this.

Fix any semigroup generated by elements $a,b\in \N$ and let $l$ be the residue of $b$ modulo $a$. 

$b-(b-\alpha(b))$ is $\alpha(b)$, which is an actual multiple of $a$. So $b-\alpha(b)=l$ and $q_1=r(b-\alpha(b),a)=l/a$.

By definition, $\beta_1$ is a multiple of $b$, which is equal to $1+\alpha_1$, with $\alpha_1$ a multiple of $a$. This implies that $\beta_1\cong_a 1$. So in any finite 2-semigroup, if $\beta_1=kb$, we have that $k$ is the multiplicative inverse of $l$ modulo $a$ (since $b\cong_a l$), and $q_2=r(\beta_1, ab)=r(kb, ab)=r(k,a)$. 

This implies that in order to approximate any pair of real numbers $q_1$ and $q_2$ as the corresponding ratios, we need to show that the possible pairs $(r, r^{-1})$ of invertible residues modulo $a$ normalized by $a$ is dense in $[0,1]^2$. This was done in \cite{BeKh}: the authors proved that the set of points $(r/a, r^{-1}/a)$ is evenly distributed on the square $[0,1]\times [0,1]$. 

By \L o\'s' Theorem, this even distribution proved in \cite{BeKh} implies:

\begin{prop}
    Given any real numbers $0\leq q_1, q_2\leq 1$, there is a limit 2-semigroup with generators $a,b$ such that $r(b-\alpha(b), a)=q_1$ and $r(\beta_1, ab)=q_2$.
\end{prop}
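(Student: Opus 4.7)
The plan is to apply \L o\'s' theorem to a sequence of standard 2-semigroups whose invariants $(l/a, l^{-1}/a)$ converge to $(q_1, q_2)$; the Beck-Khan equidistribution result cited just above guarantees such a sequence exists. The key input is the observation made in the paragraph preceding the proposition: in a standard 2-semigroup with coprime generators $a<b$, writing $l := b \bmod a$ and letting $k \in \{1,\dots,a-1\}$ be the multiplicative inverse of $l$ modulo $a$ (so that $\beta_1 = kb$), one has $r(b-\alpha(b), a) = l/a$ and $r(\beta_1, ab) = k/a = l^{-1}/a$. Beck-Khan then asserts that as $a$ ranges over the integers and $l$ over units modulo $a$, the pairs $(l/a, l^{-1}/a)$ become equidistributed, and in particular dense, in $[0,1]^2$.

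Given $q_1, q_2 \in [0,1]$, I would use this density to extract a sequence $(a_n, l_n)_{n \in \N}$ with $a_n \to \infty$, $1 \leq l_n < a_n$, $\gcd(l_n, a_n) = 1$, $l_n/a_n \to q_1$, and $l_n^{-1}/a_n \to q_2$ (where $l_n^{-1}$ denotes the inverse of $l_n$ modulo $a_n$ in $\{1,\dots,a_n-1\}$). Setting $b_n := a_n + l_n$, which is coprime to $a_n$, congruent to $l_n$ modulo $a_n$, and larger than $a_n$, I let $S_n$ be the standard 2-semigroup generated by $a_n, b_n$, fix any non-principal ultrafilter $\mathcal{U}$ on $\N$, and form $S := \prod_{\mathcal{U}} S_n$. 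Since $a_n \to \infty$, the generator $a$ of $S$ is non-finite, so $S$ is a genuine limit 2-semigroup with generators $a, b$.

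The last step is to verify the two ratio equalities by checking the defining first-order inequalities one rational at a time. For each $p, q \in \N$ with $q > 0$, \L o\'s' theorem converts $S \models pa \leq q(b - \alpha(b))$ into the condition $\{n : pa_n \leq q(b_n - \alpha(b_n))\} = \{n : p/q \leq l_n/a_n\} \in \mathcal{U}$, which is cofinite when $p/q < q_1$ and has cofinite complement when $p/q > q_1$; this forces the supremum defining $r(b-\alpha(b), a)$ to equal $q_1$. The same argument, now using that $\beta_1$ is represented by $(k_n b_n)_n$ with $k_n = l_n^{-1} \bmod a_n$, yields $r(\beta_1, ab) = q_2$. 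The main obstacle is the simultaneity of the two approximations, which is exactly what the Beck-Khan equidistribution (as opposed to mere density of each coordinate separately) delivers; granted that, everything else is a routine application of \L o\'s'.
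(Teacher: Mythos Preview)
Your proposal is correct and follows precisely the approach the paper intends: the paper's own argument is the single line ``By \L o\'s' Theorem, this even distribution proved in \cite{BeKh} implies'', and you have simply unpacked that line --- choosing an approximating sequence $(a_n,l_n)$ via the Beck--Khan density, building $b_n=a_n+l_n$, and verifying each rational inequality in the definitions of $r(b-\alpha(b),a)$ and $r(\beta_1,ab)$ via \L o\'s. There is no substantive difference between the two.
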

 
\bigskip

\medskip

This of course does not take into account the congruences. We plotted some of the possible values of $q_1$ and $q_2$ fixing the congruences of $a$ and $b$ modulo some fixed $n$. Figures 1, 2 and 3 show the results:

\begin{figure}[h]
\centering
    {\includegraphics[width=10cm]{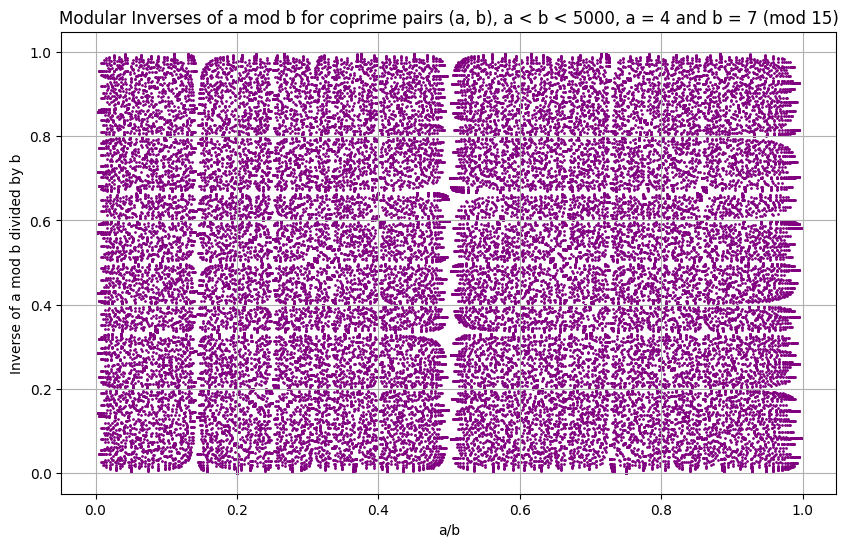} }%
    \caption{Example plot with $N \leq 5000$, $a= 4, b=7$ (mod 15)}
\end{figure}

\begin{figure}[h]
\centering
    \subfloat[\centering $N\leq 200$]{{\includegraphics[width=3.2cm]{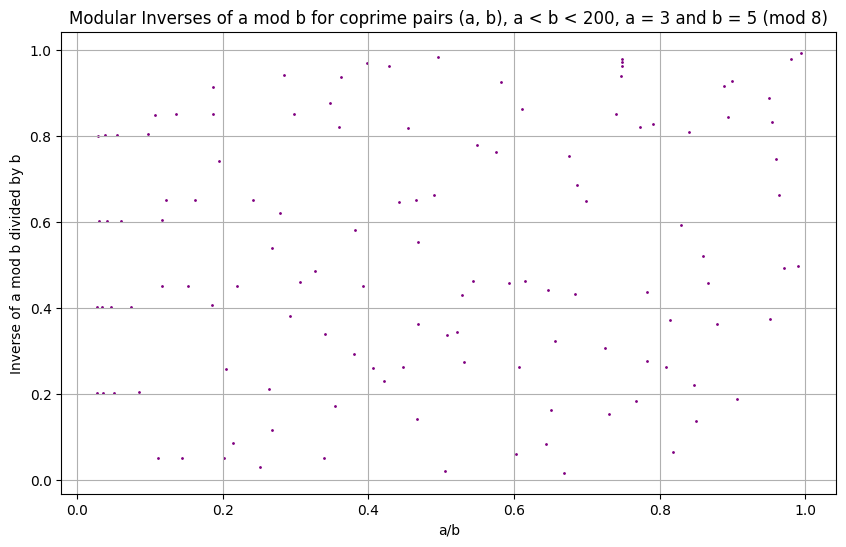} }}%
    \qquad
    \subfloat[\centering $N\leq 1000$]{{\includegraphics[width=3.2cm]{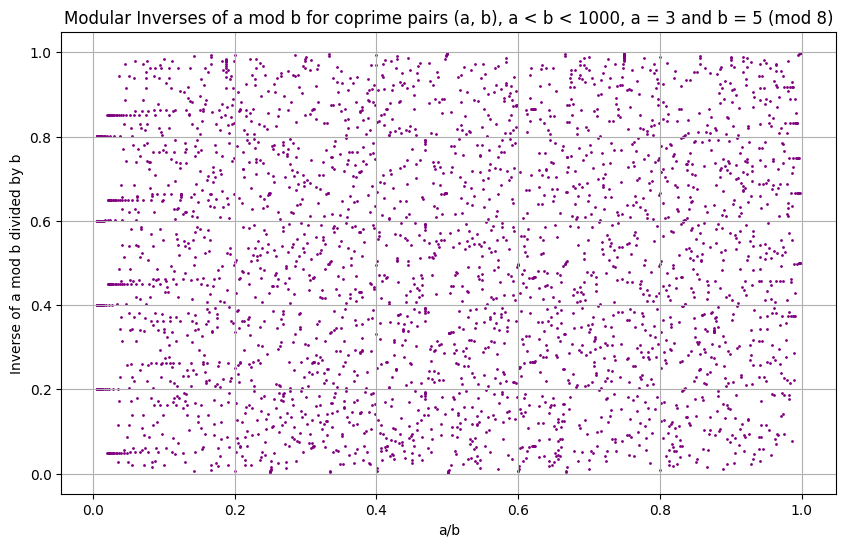} }}%
    \qquad
    \subfloat[\centering $N\leq 5000$]{{\includegraphics[width=3.2cm]{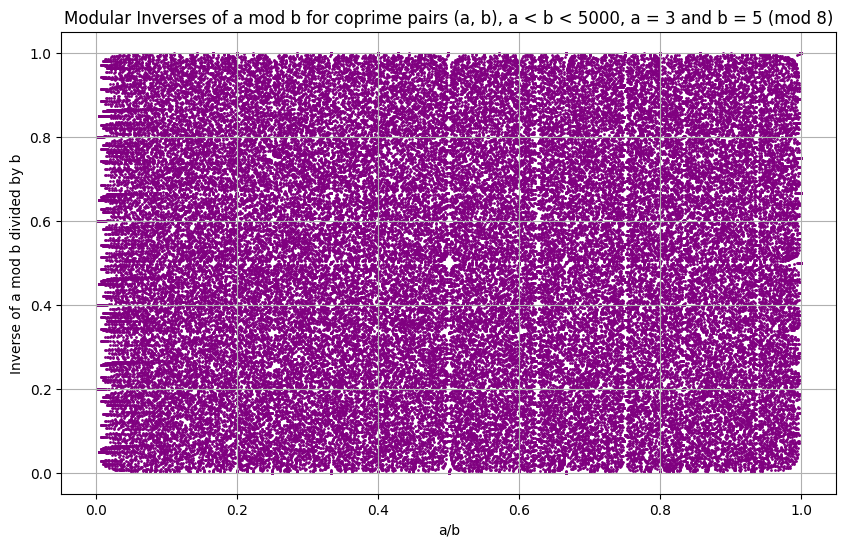} }}%
    \caption{Examples with varying $N$ and $a=3, b= 5$ (mod 8), showing the increase in density as $N$ increases} %
\end{figure}

\begin{figure}[h]
\centering
    \subfloat[\centering $a =4, b =7$ (mod 60)]{{\includegraphics[width=3.2cm]{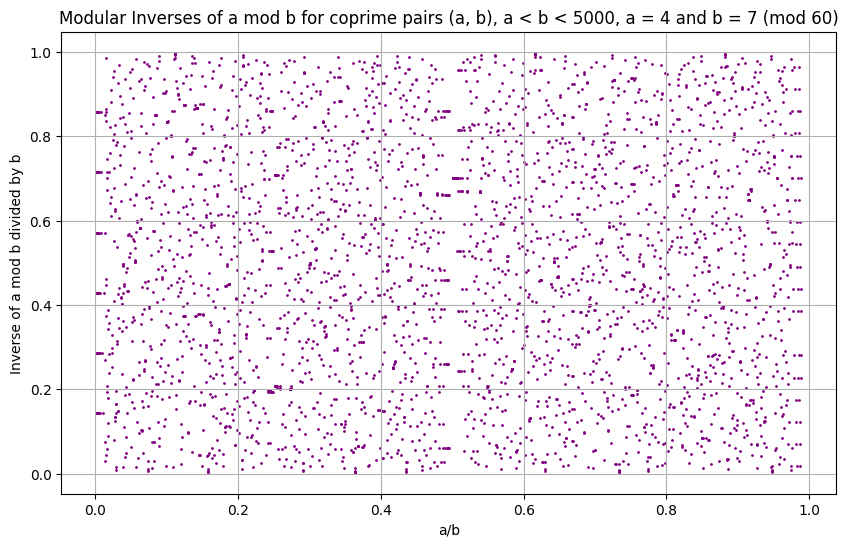} }}%
    \qquad
    \subfloat[\centering $a =17, b =41$ (mod 60)]{{\includegraphics[width=3.2cm]{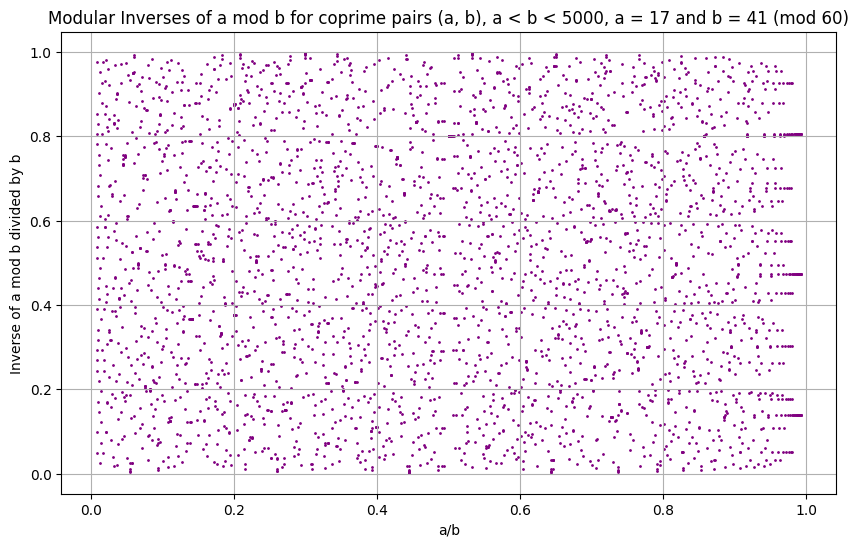} }}%
    \qquad
    \subfloat[\centering $a =20, b =33$ (mod 60)]{{\includegraphics[width=3.2cm]{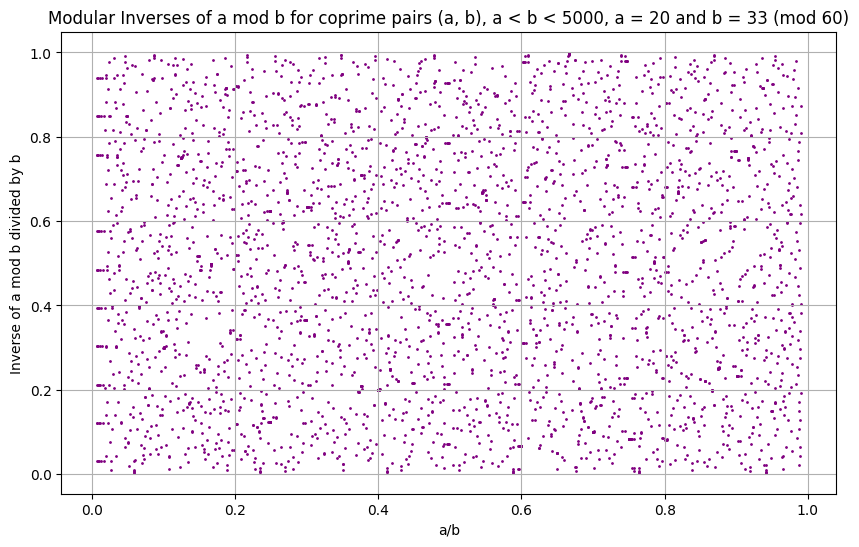} }}%
    \caption{Examples with $N \leq 5000$ and several choices of mod 60 residues for $a$ and $b$, showing no apparent differences in behavior. Note the mod 60 residues must be chosen to be coprime.} %
\end{figure}

It appears that any choice of $q_1$ and $q_2$ is compatible with any congruence choice of $a$ and $b$. But proving this would imply proving even distributions such as in \cite{BeKh}, but restricting the congruences of $a$ and $b$ modulo any number $n$. This is a very interesting question, but beyond what we are doing in this paper.

\section{Model theoretic considerations}\label{Section: model theoretic}

In the following section, we will provide necessary conditions for the completeness of the theory $T_{lons}$ once we fix the ratios $q_0, q_1$, and $q_2$. We will then use these to prove completeness of the theory for a very particular case. 

It is worth giving an intuition as to what we are going to do. We will rely on a strategy to show completeness called \emph{quantifier elimination}. This, essentially, states that if a given set of axioms implies whether or not a system of equations has a solution or not, then one is quite close to having a complete set of axioms. What a ``system of equations'' is depends on the theory. If we have quantifier elimination \emph{and} there is an initial object among the models of $T$, then the theory is complete. More precisely, the following is Theorem 3.1.15 in \cite{Mar}.

\begin{fact}\label{Marker}
If a theory $T$ has elimination of quantifiers and there is a model $M_0\models T$ that embeds into every model of $T$, then $T$ is complete.
\end{fact}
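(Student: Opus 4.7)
The plan is to show that any two models $M_1, M_2 \models T$ satisfy exactly the same $\lang$-sentences. Fix an arbitrary $\lang$-sentence $\phi$. Since $T$ has quantifier elimination, there is a quantifier-free $\lang$-sentence $\psi$ with $T \vdash \phi \leftrightarrow \psi$. Hence it suffices to prove $M_1 \models \psi$ if and only if $M_2 \models \psi$ for every quantifier-free $\lang$-sentence $\psi$.

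The key ingredient is the fact that embeddings preserve quantifier-free formulas: if $f : N \to M$ is an $\lang$-embedding and $\psi(\bar x)$ is quantifier-free, then for every tuple $\bar a$ from $N$, $N \models \psi(\bar a)$ if and only if $M \models \psi(f(\bar a))$. I would prove this by induction on the complexity of $\psi$: the atomic case holds because embeddings preserve the interpretations of function symbols, relation symbols, and equality by definition, and the boolean connectives are then formal. The special case of sentences (tuples of length zero) is exactly what we need.

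Applying this to the two embeddings $f_i : M_0 \hookrightarrow M_i$ provided by the hypothesis, I obtain $M_0 \models \psi \iff M_1 \models \psi$ and $M_0 \models \psi \iff M_2 \models \psi$. Chaining these equivalences yields $M_1 \models \psi \iff M_2 \models \psi$, so $M_1 \equiv M_2$, and hence $T$ is complete.

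There is essentially no main obstacle here; this is a clean, standard argument. The only subtlety worth flagging is that when the language contains function symbols, one must verify that embeddings preserve the values of closed terms, which is immediate from the requirement that $f$ commute with all function symbol interpretations. Note in particular that the embeddings $f_i$ need not be elementary; quantifier elimination reduces every sentence to its quantifier-free fragment, which any embedding automatically preserves, and it is this reduction alone that powers the argument.
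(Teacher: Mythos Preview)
Your argument is correct and is the standard proof of this fact. The paper does not give its own proof; it simply cites the result as Theorem~3.1.15 in Marker's textbook, so there is nothing further to compare.
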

The model $M_0$ above is called \emph{a prime model of $T$}.

In this section we will explore what is needed for quantifier elimination, explain why we will restrict ourselves to the case $m(b)=\beta_n$, and prove that in this case the $\hat\lang$-structure generated by the constants is a prime model of the structure.

\subsection{Systems of equations and the theory of $\mathcal Z_a$}

We will prove (modulo a small caveat, which we will get into in the following section) 
that for our system of axioms, in order to prove completeness, we need to show that given any 2-semigroup $S$, the axioms we set for $S$ (meaning $T_{lons}$ and the invariants satisfied by $S$) are enough to decide, for any $b_1, b_2\in M_b(S)$, any two  definable distances $a_1,a_2$ less than $a$, and any $n\in \N$,  whether or not there is an element $x$ in $M_b(S)$ satisfying: 
\begin{itemize}
\item A (fixed) complete set of $M_b$-residues for $x$ modulo $n$.
\item A complete set of residues modulo $n$ for $\alpha(x)$. Given the previous item, it is equivalent to giving a complete set of residues modulo $n$ for $x-\alpha(x)$.
\item $a_1<x-\alpha(x)<a_2$.
\item $b_1< x<b_2$.
\item A complete set of $M_a$-residues modulo $n$ for $\alpha(x)$.
\end{itemize}

\begin{dfn}
    We will refer to any combination as above as a \emph{system of equations} (or an $n$-\emph{system of equations} if $n$ is important). 
\end{dfn}

We will say that a 2-semigroup (limit or standard) \emph{realizes} a system or equations $\Phi(x)$ if there is an element in the 2-semigroup satisfying the system. Otherwise we will say that the 2-semigroup \emph{omits} $\Phi(x)$. 

\medskip

The $M_a$-residues modulo $n$ of $\alpha(x)$ fully determine its residues modulo $n$, as Remark \ref{r-residue determined} states. However, in certain cases, the $R$-residues of $\alpha(x)$ determine its $M_a$-residues:

\begin{rem}\label{finite primes}
    If $p^n$ is the maximum power of $p$ dividing $a$, then a complete set of residues for $\alpha(x)$ modulo $p^{n+k}$ (which in particular must imply that $\alpha(x)$ is divisible by $p^n$) will determine a complete set of $M_a$-residues modulo $p^k$. This implies that whenever $a$ is such that for every prime $p$ there is some $n$ such that $\neg R_{p^n,0}(a)$, the complete set of $M_a$-residues for $\alpha(x)$ can be determined if we know a complete set of residues modulo $m$ for $\alpha(x)$ for some large enough $m$.
\end{rem}

In light of the remark above, we will define:

\begin{dfn}\label{def: reduced system}
    A \emph{reduced system of equations} is a list of formulas consisting of:
    \begin{itemize}
\item A complete set of $M_b$-residues for $x$ modulo $n$.
\item A complete set of residues modulo $n$ for $\alpha(x)$ (equivalently given the previous item, for $x-\alpha(x)$).
\item $a_1<x-\alpha(x)<a_2$.
\item $b_1< x<b_2$.
\end{itemize}
\end{dfn}

To help with the notation, let $\phi(x)$ be the function assigning to each element $x\in M_b$ the element $x-\alpha(x)$ (which in the standard case is just $\res_a(x)$). This operation is clear for any finite semigroup $S_{fin}$ and as we saw in the previous section, is interpretable in any limit 2-semigroup $S$.

We will fix a limit 2-semigroup generated by elements $a,b$. As mentioned before, understanding whether or not our axioms imply a system of equations, is equivalent to showing that given any system of equations $\Phi(x)$, we can find a finite subset of our axioms $\Sigma$ such that either every (standard) 2-semigroup satisfying $\Sigma$ realizes $\phi(x)$ or every (standard) 2-semigroup satisfying $\Sigma$ omits $\phi(x)$.

\begin{rem}\label{follows discussion}
   A complete set of axioms must decide whether or not a system is realized. What we will prove in the following section is that if the axioms decide whether or not any particular system is realized, then one has quantifier elimination which for certain $q_1, q_2$ will imply completeness.
\end{rem}

For any finite $n\in \mathbb N$, let $(res_n, +_n, <)$ be the structure consisting of the natural numbers less than $n$ together with addition modulo $n$ and the order.

Similarly, we will refer as $(res_a, +_a, <)$ to all the definable distances less than $a$ with addition modulo $a$, and the order between them, and as $(M_b(S), +_{ab}, <)$ to the elements in $M_b(S)$ with addition modulo $ab$.  

For finite $a,b$, the structure $(res_{a}, +_{a}, <)$  is naturally isomorphic to $(M_{b}(S), +_{ab}, <)$ with the isomorphism that sends $w<a$ to $bw$. Notice that the $M_b$-residues of $bw$ modulo $n$ is just the residue of $w$ modulo $n$.

We also have a map from $(M_b(S), +_{ab}, <)$ to $(res_{a}, +_{a}, <)$, that sends $x$ to $\phi(x)$. In the standard case $\phi(x)=\res_a(x)$, so composing with the isomorphism from $(res_{a}, +_{a}, <)$ to $(M_{b}(S), +_{ab}, <)$ we get a map from $(res_{a}, +_{a}, <)$ to itself sending $w$ to $\res_a(bw)=\res_a(b)\odot_a\res_a(w)$. So we are just multiplying $w$ by the residue of $b$ modulo $a$. Let $\lambda$ be such a map.

\medskip

Fixing $x=bw$ with $w<a$, understanding whether or not an $n$-system of equations is realized by $x$ is equivalent to understanding whether or not there is some $w$ such that, for some fixed $n\in \N$ and some $a_1, a_2$ and $d_1, d_2$ (the images of $b_1$ and $b_2$ in $M_b(S)$ under the aforementioned isomorphism) all distances less than $a$,
\begin{itemize}
\item $w$ satisfies a complete set of residues modulo $n$
\item $\lambda(w)$ satisfies a complete set of residues modulo $n$.
\item $a_1< \lambda(w)<a_2$.
\item $d_1< w <d_2$.
\end{itemize}

These are all questions about $(res_{a}, +_{a}, <_{a}, \lambda)$.

\medskip

If $b$ is not finite, we don't have an isomorphism from  $(res_{a}, +_{a}, <)$ to $(M_{b}(S), +_{ab}, <)$ (in the finite case it is just adding everything $b$ times). And multiplication cannot be added to the language with any hope of succeeding since the resulting theory would be undecidable. This implies that the structure $(res_{a}, +_{a}, <_{a}, \lambda)$ is not necessarily definable in the limit 2-semigroup $S$ with generators $a,b$, but it is close enough:  $(res_{a}, +_{a}, <_{a})$ is, and  $\lambda$ as a definable map between the two disjoint copies of $(res_{a}, +_{a}, <_{a})$ definable in $S$, namely, $(M_{b}, +_{ab}, <_{ab})$ and $(res_{a}, +_{a}, <_{a})$.

And an axiomatization for $(res_{a}, +_{a}, <_{a}, \lambda)$ is stronger than what we need. If we understood this structure we would be able to understand whether a reduced system can be solved.

By Fact \ref{Garcia}, without the scalar multiplication $\lambda$  the theory of $(res_{a}, +_{a}, <_{a})$ (as the substructure of $S$ consisting on differences less than $a$) is decidable, axiomatizable, and the theory follows from the axioms $T_{lons}$ together with the invariants. If $r$ is ``rational'' (meaning a finite quotient of a finite sum of the first non-zero element in $(res_{a}, +_{a}, <_{a})$) and $\lambda_r$ is multiplication by $r$, then $(res_{a}, +_{a}, <_{a}, \lambda_r)$ is interpretable in $(res_{a}, +_{a}, <_{a})$, and whether or not a systems can be solved follows from the axioms. 

As we shall see in the next section this is enough to prove that \emph{in this case} the axioms and language we defined in Section \ref{Section 1} provide the complete theory of $S$ and give quantifier elimination.

\bigskip

To summarize, a complete axiomatization of the structure $S$ will involve a complete axiomatization of the structure consisting of two disjoint copies of $\left(res_{a}, +_{a}, <_{a}\right)$ together with the map $\lambda_{r}$ between them, and a complete axiomatization of $\left(res_{a}, +_{a}, <_{a}, \lambda_{r}\right)$ will imply whether a reduced system of equations is realized or not. 

However, as far as we can tell this is beyond what is known at the moment for arbitrary $r$. In the next section we will provide a complete axiomatization of $S$ for the case where we do understand the theory of $\left(res_{a}, +_{a}, <_{a}, \lambda_{r}\right)$ (i.e. $r$ rational), and prove that knowing the theory of $\prod(res_{a_i}, +_{a_i}, <_{a_i}), \lambda_{r_i}/\mathcal U$ is enough to axiomatize $S$ assuming that not prime ``divides'' $a$ ``infinitely many times''.

\subsection{Prime models}\label{Subsection Prime Models}
Let $T$ be the theory $T_{lons}$ together with the sentences describing a complete set of invariants as in Section \ref{Section invariants}. Let $\mathcal M\models T$. We will generate a substructure $M^0$ of $\mathcal M$ that is both a model of $T$ and such that it can be embedded into any other model of $T$. So all the elements we mention are assumed to be elements in $\mathcal M$.

We will define $M^0_a$ as the set of elements less than or equal to $ab$ which can be generated from $\{a, ab, \alpha(b), \alpha_1\}$ with addition, subtraction and division by $n$ restricted to elements in $Res^a_{n,0}(M_0^a)$ for any $n\in \mathbb N$. Recall that because it is a model of bounded Presburger, $y-x$ exists for $x,y\in M_a$ whenever $y>x$ and that $x\in M_a$ is divisible by $n$ whenever $R^a_{n,0}(x)$ holds. 

We define $M^0_b$ similarly, starting with $\{b, ab, \beta_1\}$ as generators. Similarly, $y-x$ exists for $x,y\in M_b$ whenever $y>x$ and $x\in M_b$ is divisible by $n$ whenever $R^b_{n,0}(x)$ holds.

Let $M^0:=\bigcup_{n\in \mathbb N} n(ab)+M^0_a+M^0_b$. 

If $\nodel$ is a different model of $T$, there is a natural injection $f$ from $M^0$ to $\nodel$ which just comes from first sending the constants in $\model$ to the constants in $\nodel$ and extending to injections from both $M^0_a$ and $M_b^0$ by closing under subtraction and division by $n$ and then closing under addition.  

Once we fix all the invariants, the functions $\pi_a, \pi_b, \alpha, \beta$, and the order in $M_0$ will coincide with the one in $f(M^0)$. We will not give all the details in the general case for two reasons: One,  essentially the proofs are a simplified version of the the proof of Lemmas \ref{inequalities} and \ref{localiso}. The other reason is that, because of the discussion in the previous subsection, we will only be able to prove quantifier elimination (and therefore completeness) in the case where $m(b)=\beta_n$ and $q_0=0$. In order to have a complete proof in this case and to give an insight into the more complicated proofs in the following section, let us explain why this holds. 

If $m(b)=\beta_n$, then $\beta_n\in \langle b, ab \clos$, and it follows from Theorem \ref{Axiom 12} that $\beta_1\in \langle b, ab\clos$, and 
\[M_b^0=\langle b, ab \clos=\{nb\}_{n\in \N}\cup \left\{\frac{l(ab)-\left(l\cdot \res_n^b(ab)\right)(b)}{n}+kb\right\}_{n>1, 0\leq l<n,  k\in \Z}\cup  \{ab-nb\}_{n\in \N}.\]
In this case we have an explicit isomorphism with $\mathcal Z_a$ by sending $nb$ to $n$, $\frac{ab-\res_n^b(ab)}{n}+ka$ to $\frac{a-\res_n(a)}{n}+k$ and $ab-na$ to $a-n$. Since $m(b)=\beta_n$, we have that $\phi(m(b))=n$, which under the identification implies that $\phi$ corresponds to the map from $\mathcal Z_a$ to $\mathcal Z_a$ sending $x$ to a $y$ such that $mx-ny=s(a)$ for some $s\in \N$. This map needs to be injective, so $(m, \res_m{a})=1$. This is completely determined by the theory of $([0, a), +_a, <)$, which by Fact \ref{Garcia} is completely determined by the set of residues of $a$ modulo $l$, where $l$ varies in $\N$.

This proves that

\begin{claim}
For any $x,y\in M_b^0$, $f(\alpha(x))=\alpha(f(x))$ and $x-\alpha(x)<y-\alpha(y)$ if and only if $f(x)-\alpha(f(x))<f(y)-\alpha(f(y))$.
\end{claim}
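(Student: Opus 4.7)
The plan is to reduce the two assertions to a single preservation statement for an embedding between bounded Presburger structures and then invoke Fact \ref{Garcia}. First, the ordering half of the claim is a formal consequence of the commutation $f(\alpha(x))=\alpha(f(x))$, because $f$ was defined so as to preserve $+$, $-$ and $<$ on $M_b^0$ by construction: once $\alpha$ is preserved, applying $f$ to $x-\alpha(x)<y-\alpha(y)$ yields $f(x)-\alpha(f(x))<f(y)-\alpha(f(y))$ automatically. The entire content of the claim therefore reduces to showing $f\circ\alpha=\alpha\circ f$ on $M_b^0$.

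To prove this, I would work through the explicit identification $M_b^0\cong\mathcal Z_a$ described in the paragraph preceding the claim. This identification uses only the constants $b,ab,\beta_1$ and the operations $+,-,\cdot/n$, all of which $f$ respects by the very construction of $M^0$ and of $f$. Hence $f\restriction M_b^0$ descends to a quantifier-free embedding of the bounded Presburger substructure $\mathcal Z_a^{\mathcal M}$ into $\mathcal Z_a^{\mathcal N}$. Under the same identification the function $\phi(x):=x-\alpha(x)$ becomes the $\emptyset$-definable map $\lambda$ on $\mathcal Z_a$ sending $x$ to the unique $y<a$ with $mx-ny$ a multiple of $a$, where $m,n$ are the finite natural numbers fixed by the invariant $m(b)=\beta_n$.

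The final step is to apply Fact \ref{Garcia}: the type of any tuple in a bounded Presburger model is determined by its quantifier-free type together with $a$, so any quantifier-free embedding fixing $a$ is elementary on its image and in particular preserves every $\emptyset$-definable function, including $\lambda$. Translating back through the identification, $f$ commutes with $\phi$ on $M_b^0$, hence with $\alpha$, as required. The main obstacle is the verification that $\phi$ really corresponds to $\lambda$ under the identification; this rests on the unique decomposition of Proposition \ref{basic facts}(3), Theorem \ref{Axiom 12}, and the relation $m(b)=\beta_n$, but once this translation is in place everything else is a formal application of the preservation properties of $\mathcal Z_a$.
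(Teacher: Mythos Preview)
Your overall strategy—identifying $M_b^0$ with $\mathcal Z_a$, recognising that $\phi(x)=x-\alpha(x)$ becomes the $\emptyset$-definable map $\lambda$ (multiplication by $n/m$ modulo $a$) under this identification, and then invoking Fact \ref{Garcia}—is exactly the paper's argument. That part is correct and complete.

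There is, however, a circularity in your first reduction. You claim the ordering half follows formally from $f\circ\alpha=\alpha\circ f$ together with ``$f$ preserves $+,-,<$ on $M_b^0$''. But the quantities $x-\alpha(x)$ and $y-\alpha(y)$ are \emph{distances} less than $a$; they are not elements of $M_b^0$ (nor of $M_a^0$, nor even of the semigroup in general). Rewriting the inequality as $x+\alpha(y)<y+\alpha(x)$ lands you in $M_a^0+M_b^0$, and order-preservation on such mixed sums is precisely what the Claim is a lemma \emph{towards}—it is what the Proposition immediately following the Claim establishes, using the Claim. So you cannot appeal to it here.

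Fortunately this gap is harmless, because your subsequent argument already proves both halves of the Claim at once. Once $\tilde f:\mathcal Z_a^{\mathcal M}\to\mathcal Z_a^{\mathcal N}$ is elementary (by Fact \ref{Garcia}), it preserves the definable function $\lambda$ \emph{and} the order, so $\lambda(w)<\lambda(v)\iff\lambda(\tilde f(w))<\lambda(\tilde f(v))$ follows directly, without any prior reduction to $\alpha$-commutation. Just drop the first paragraph and let the $\mathcal Z_a$ argument do all the work; that is also how the paper presents it.
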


\begin{prop}
     $f$ is $\hat{\mathcal L}$-embedding (see Definition \ref{embedding-def}). 
\end{prop}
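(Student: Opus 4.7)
The plan is to verify preservation of each symbol of $\hat{\mathcal L}$ in turn, leaning heavily on the preceding claim (which already handles $\alpha$ and the ordering of $a$-residues on $M_b^0$) and on unique decomposition.

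First I would check that $f$ restricted to $M_b^0$ and to $M_a^0$ are $\{+,-,\frac{\cdot}{n},<,R^b_{n,r},R^a_{n,r}\}$-embeddings. For $M_b^0$ this is essentially already done: under the case assumption $m(b)=\beta_n$, we have the explicit identification of $M_b^0$ with $\mathcal Z_a$ given just above the claim, and the same identification applies verbatim in $\mathcal N$ because the $b$-residue of $ab$ and all congruences of $b$ (hence all relevant $R^b_{n,r}$ predicates) are pinned down by the fixed invariants. The order and operations on $\mathcal Z_a$ are determined by $\qftp(a)$ (Fact \ref{Garcia}), so they are preserved. The analogous argument for $M_a^0$ uses $\{a, ab, \alpha(b), \alpha_1\}$ together with the fixed $M_a$-residues of $\alpha(b)$ and $\alpha_1$; here $q_0 = 0$ ensures $\alpha(b)$ is genuinely a new generator and is needed to recover $\phi$, but the rest of the argument is symmetric.

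Next, I would extend these two partial embeddings to all of $M^0 = \bigcup_{n\in\N} n(ab)+M_a^0+M_b^0$ using the unique decomposition axiom (axiom 8 of Definition \ref{dfn axioms}). Because decomposition is unique, $f$ is well-defined on $M^0$ and the functions $\pi_a,\pi_b$ are preserved automatically. For preservation of $\alpha$ and $\beta$ on arbitrary elements of $M^0$, I would invoke Remark \ref{alphafunction}: the value of $\alpha$ on a sum is determined by the values on the summands together with a single carry bit $\epsilon\in\{0,1\}$ telling whether the $a$-residue parts overflow $a$. That bit depends only on an inequality of the form $(x_1-\alpha(x_1))+(x_2-\alpha(x_2)) < a$, which is preserved by $f$ by the preceding claim (applied to $M_b^0$) and its $M_a^0$-analogue.

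Preservation of $+$ and $<$ on $M^0$ then follows: addition of two decomposed elements is componentwise plus carries into the next copy of $ab$, and these carries are governed precisely by the same overflow data just shown to be preserved; the order is the lexicographic comparison of the $ab$-coefficient, then the $M_a$- and $M_b$-components. Finally, preservation of the global residue predicates $R_{n,r}$ follows from Remark \ref{r-residue determined}, since $R_{n,r}(x)$ is determined by $R^a_{n,r'}(\pi_a(x))$, $R^b_{n,r''}(\pi_b(x))$, and the fixed residues of $a,b$. The main obstacle I anticipate is the bookkeeping of the carry terms $\epsilon a$ and the overflow into higher multiples of $ab$: one must verify these carries can be expressed purely in terms of data that $f$ is already known to preserve, which is exactly why the preceding claim was formulated as a statement about the ordering of $x-\alpha(x)$ rather than a pointwise statement.
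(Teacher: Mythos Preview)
Your overall plan is sound and aligns with the paper's strategy, but there is one point where your description is imprecise enough to be wrong, and it happens to be exactly where the paper concentrates its effort.

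The paper's proof is considerably more economical than yours: it asserts that, by the way $f$ was constructed on $M_a^0$ and $M_b^0$ and then extended by unique decomposition, preservation of the constants, of $+$, $-$, $\frac{\cdot}{n}$, of $\pi_a,\pi_b$, and of all residue predicates is immediate. The \emph{only} thing the paper checks explicitly is preservation of $<$, and it reduces this (using $ab>n(a)$ and $ab>m(b)$) to showing that for $\beta_1,\beta_2\in M_b^0$ and $\alpha_1,\alpha_2\in M_a^0$,
\[
\beta_1+\alpha_1<\beta_2+\alpha_2 \iff f(\beta_1)+f(\alpha_1)<f(\beta_2)+f(\alpha_2).
\]

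This is where your ``lexicographic comparison of the $ab$-coefficient, then the $M_a$- and $M_b$-components'' is not correct. Two sums $\alpha_1+\beta_1$ and $\alpha_2+\beta_2$ with the same $ab$-part are \emph{not} compared by looking at the $M_a$- and $M_b$-components separately; the components interleave. The paper's argument is: first compare $\alpha(\beta_1)+\alpha_1$ with $\alpha(\beta_2)+\alpha_2$ inside $M_a^*$. If these differ, the order is forced (since multiples of $a$ differ by at least $a$, while each $\beta_i-\alpha(\beta_i)<a$). If they are equal, the order between $\beta_1+\alpha_1$ and $\beta_2+\alpha_2$ reduces to comparing $\beta_1-\alpha(\beta_1)$ with $\beta_2-\alpha(\beta_2)$, and \emph{that} is exactly what the preceding claim guarantees is preserved by $f$. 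Your proposal invokes the claim for the carry bits in $\alpha$ and $\beta$, which is fine, but you also need it here, in this second case of the order comparison, and your ``lexicographic'' shortcut obscures that this is the crux.

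So: your systematic check of each $\hat{\mathcal L}$-symbol is a legitimate route and buys you explicit verification of things the paper waves through ``by construction,'' but you should replace the lexicographic description of $<$ with the two-case argument above.
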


\begin{proof}
    We only need to prove that $f$ preserves the ordering. By construction, and since $ab>na$ and $ab>mb$ for all $m,n\in \N$, it is enough to show that if $\beta_1, \beta_2\in M_b^0$ and $\alpha_1, \alpha_2\in M_a^0$ then $\beta_1+\alpha_1<\beta_2+\alpha_2$ if and only if $f(\beta_1+\alpha_1)<f(\beta_2+\alpha_2)$. Assume $\beta_1+\alpha_1<\beta_2+\alpha_2$. There are two cases. If $\alpha(\beta_1)+\alpha_1<\alpha(\beta_2)+\beta_2$ then by definition of $\alpha$ we get $f(\beta_1+\alpha_1)<f(\beta_2+\alpha_2)$. If $\alpha(\beta_1)+\alpha_1=\alpha(\beta_2)+\beta_2$ then by the claim $\alpha(f(\beta_1))+f(\alpha_1)=\alpha(f(\beta_2))+f(\beta_2)$ and  $\beta_1+\alpha_1<\beta_2+\alpha_2$ if and only if 
\[\beta_1-\alpha(\beta_1)+\alpha(\beta_1)+\alpha_1<\beta_2-\alpha(\beta_2)+\alpha(\beta_2)+\alpha_2\] if and only if $\beta_1-\alpha(\beta_1)<\beta_2-\alpha(\beta_2)$. By the claim this implies $f(\beta_1)-\alpha(f(\beta_1))<f(\beta_2)-\alpha(f(\beta_2))$ which similarly implies $f(\beta_1)+f(\alpha_1)<f(\beta_2)+f(\alpha_2)$, as required.
\end{proof}

So in order to prove that $M^0$ is a prime model of $T$ the only thing we have left to prove is that $M^0$ is a model of $T$. By construction (and because it is a substructure of $\model$), it satisfies axioms $1$ through $10$ in Definition \ref{dfn axioms}. We only need to show $11$ holds as well.

\begin{prop}
    Let $M^0$ be the $\hat{\mathcal L}$-structure generated by the constants of any given model $\model$ as specified above. Let $n\in \N$. Then $R_{n,0}(x)\Rightarrow \exists y \ n(ab)+x=n(y)$.
\end{prop}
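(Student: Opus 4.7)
The strategy is to construct $y$ explicitly inside $M^0$ by decomposing $x$ and applying Theorem \ref{Axiom 12} to each component. Write $x = k(ab) + x_a + x_b$ by unique decomposition, with $k \in \N$, $x_a \in M^0_a$ and $x_b \in M^0_b$ each less than $ab$, and look for $y$ of the matching form $y = m(ab) + y_a + y_b$ with $y_a \in M^0_a$ and $y_b \in M^0_b$ both less than $ab$. Expanding $n(y)$ and tracking the ``carries'' $p, q \in \{0, \ldots, n-1\}$ that arise because $n(y_a)$ and $n(y_b)$ may exceed $ab$, the equation $n(y) = n(ab) + x$ becomes the system $n(y_a) = p(ab) + x_a$, $n(y_b) = q(ab) + x_b$, and $nm + p + q = n + k$.

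For any fixed $p$, Theorem \ref{Axiom 12} provides $y_a \in M_a$ with $n(y_a) = x_a + p(ab)$ precisely when $n$ divides $\res_n^a(x_a) + p\cdot\res_n(b)$ (using $\res_n^a(ab) = \res_n(b)$); moreover, inspecting the explicit formula in the proof of Theorem \ref{Axiom 12} shows that $y_a$ is built from $x_a$, $a$, and $ab$ using only $+$, $-$, and $\div n$, and that the intermediate sums stay below $ab$, so $y_a$ in fact lies in $M^0_a$. Symmetrically, $y_b \in M^0_b$ is available whenever $n \mid \res_n^b(x_b) + q\cdot\res_n(a)$. The proof thus reduces to exhibiting $p, q \in \{0, \ldots, n-1\}$ satisfying these two divisibility conditions simultaneously and with $p + q \equiv k \pmod n$; the congruence then forces $m := (n+k-p-q)/n$ to be a nonnegative integer.

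To produce such $p, q$ I would use the Chinese Remainder Theorem (Proposition \ref{Predicates Behavior}(3)) to reduce to prime-power moduli $n = \ell^e$. Coprimality $\gcd(a, b) = 1$ guarantees $\ell$ divides at most one of $a, b$, so at least one of $\res_n(a), \res_n(b)$ is a unit modulo $n$ and the corresponding congruence has a unique solution. The hypothesis $R_{n,0}(x)$, combined with Remark \ref{implication of residues} and the additivity of residues from Proposition \ref{Predicates Behavior}, yields the compatibility $k\cdot\res_n(ab) + \res_n(x_a) + \res_n(x_b) \equiv 0 \pmod n$, which is exactly the identity required to make the chosen $p, q$ satisfy $p + q \equiv k \pmod n$. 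Reassembling the prime-power solutions via CRT then provides $p, q$ for general $n$.

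The main obstacle is this existence step: verifying that the two divisibility conditions from Theorem \ref{Axiom 12} for $y_a$ and $y_b$ can be met simultaneously with the carry sum $p + q$ in the correct residue class modulo $n$, which requires the case analysis above on whether a prime divides $a$ or $b$. Once $p, q, m$ are chosen, $n(y) = (nm + p + q)(ab) + x_a + x_b = n(ab) + x$ follows by direct calculation, and $y = m(ab) + y_a + y_b$ lies in $M^0$ by the definition $M^0 = \bigcup_{n \in \N} n(ab) + M^0_a + M^0_b$.
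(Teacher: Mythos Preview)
Your approach is correct, and the main ideas line up with the paper's, but the execution is organised differently.

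The paper proceeds asymmetrically. It absorbs the extra $n(ab)$ into the $ab$-coefficient, writes $x = m(ab) + \alpha + \beta$ with $m = nl + k$, and then handles the two components in different ways. For the $M_a$-part it does essentially what you do: pick $s<n$ so that $s(a)+k(ab)+\alpha$ has $M_a$-residue $0$ modulo $n$ and use the Claim inside Theorem~\ref{Axiom 12} to produce $y_\alpha\in M^0_a$ with $n(y_\alpha)=s(a)+k(ab)+\alpha$. For the $M_b$-part, however, it does \emph{not} invoke the Claim a second time. Instead it uses $R_{n,0}(x)$ to deduce $\res_n(\beta)=\res_n(s(a))$ and applies the \emph{main} statement of Theorem~\ref{Axiom 12} to the pair $(\beta,s(a))$, obtaining $\beta-s(a)=n(\beta'-\alpha')$ with $\beta'\in M^0_b$, $\alpha'\in M^0_a$. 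The desired $y$ is then assembled as $(l-1)(ab)+(ab-\alpha')+y_\alpha+\beta'$.

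The point of this asymmetry is that the prime-by-prime case analysis (``$\ell\mid a$ versus $\ell\mid b$'') and the CRT reassembly you carry out explicitly are already packaged inside the proof of Theorem~\ref{Axiom 12}. By calling the full theorem rather than only its Claim, the paper avoids repeating that work. Your symmetric version, solving $n(y_a)=x_a+p(ab)$ and $n(y_b)=x_b+q(ab)$ separately and then matching the carries $p+q\equiv k\pmod n$, is equally valid; it just unpacks the same argument one level deeper and makes the role of coprimality of $a,b$ visible at this stage rather than delegating it. Both routes yield an explicit $y\in M^0$ built from the generators by $+,-,\frac{\cdot}{n}$, which is what is needed.
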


\begin{proof}
 By Theorem \ref{Axiom 12}, for any $\beta\in M_b^0$ and $\alpha\in M_a^0$, if $\res_n(\beta)=\res_n(\alpha)$ then there are $\beta'\in M_b^0$ and $\alpha'\in M_a^0$ wuch that $\beta-\alpha=n(\beta'-\alpha')$.

\bigskip

Now, let $x\in M^0$ be such that $R_{n,0}(x)$ holds. Let $x=m(ab)+\alpha+\beta$ for some $m$. It is clearly enough to prove that if 
$m>1$ there is some $y$ such that $n(y)=x$. 

Let $m=nl+k$ with $l,k\in \N, l\geq 1$ and $k$ the residue of $m$ modulo $n$. 

Let $s<n$ be such that $n$ divides $s+res^a_n(k(ab))+res^a_n(\alpha)$. By the claim in Theorem \ref{Axiom 12} there is some $y_\alpha\in M_a^0$ such that $n(y_\alpha)=s(a)+(k)ab+\alpha$ so in particular $R_{n,0}(s(a)+(k)ab+\alpha)$ and since  
\[0=res_n\left(m(ab)+\alpha+\beta\right)=res_n\left(nl(ab)+k(ab)+\alpha+s(a)+\beta-s(a)\right)\]
we have $res_n\left(k(ab)+\alpha+s(a))+\res_n(\beta)-\res_n(s(a))\right)=0$ and $\res_n(\beta)=\res_n(s(a))$. So there are $\beta'\in M_b^0$ and $\alpha'\in M_a^0$ such that $n(\beta'-\alpha')=\beta-s(a)$. 

But this implies that 
\[
n\left(\left(l-1\right)\left(ab\right)+\left(ab-\alpha'\right)+y_\alpha+\beta'\right)=nl(ab)-n(\alpha')+n(y_\alpha)+n(\beta')=x.
\]
Since $\left(l-1\right)\left(ab\right), \left(ab-\alpha'\right), y_\alpha$ and $\beta'$ are all in $M^0$, this completes the proof of the proposition.
  \end{proof}

\section{Quantifier Elimination}\label{Section quantifier elimination}
In this next section we will give necessary conditions for the invariants we found in Section \ref{Section invariants} to fully characterize the theory of the limit 2-semigroup, namely, the residue types of elements $\hat{\lang}$-definable without quantifiers, and the three ratios $r(a,b), r(b-\alpha(b),a)$, and $r(\alpha_1,ab)$. This result will follow from such theories admitting quantifier elimination, meaning the following:

\begin{dfn}
    A theory $T$ in a language $\mathcal L$ has \emph{quantifier elimination} if for every formula $\phi(\bar x)$ there is a quantifier free formula $\psi(\bar x)$ such that 
    \[
    T\models \forall \bar x\ [ \phi(\bar x)\Leftrightarrow \psi(\bar x)].
    \]
\end{dfn}

We will then show examples of choices of such invariants that completely characterize the theory of the limit 2-semigroup.

\medskip

We begin with some standard model theoretic results.

The following is Corollary 3.1.6 in \cite{Mar}. 

\begin{fact}
    Let $T$ be an $\mathcal L$-theory. Suppose that for all quantifier free formulas $\phi(\bar x, y)$ and all $\mathcal M, \mathcal N$ models of $T$, if $A$ is a common substructure of $\mathcal M$ and $\mathcal N$ and $\bar a\in A$ and $b\in \mathcal M$ are such that $\mathcal M\models \phi(\bar a, b)$, then there is some $c\in \mathcal N$ such that $\mathcal N\models \phi(\bar a, c)$.

    Then $T$ has quantifier elimination.
\end{fact}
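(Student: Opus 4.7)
The plan is to invoke the standard compactness-with-diagrams argument. First I would reduce, by induction on formula complexity, to showing that every formula of the shape $\exists y\, \phi(\bar x, y)$ with $\phi$ quantifier-free is equivalent modulo $T$ to some quantifier-free $\psi(\bar x)$: negations preserve quantifier-freeness, $\forall$ is dual to $\exists$, and Boolean combinations of quantifier-free formulas remain quantifier-free, so single-quantifier eliminations compose to full quantifier elimination.

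For a fixed $\exists y\, \phi(\bar x, y)$ I would introduce fresh constants $\bar a$ and let $\Gamma(\bar a)$ denote the set of quantifier-free $\mathcal L(\bar a)$-consequences of $T \cup \{\exists y\, \phi(\bar a, y)\}$. The key claim is that $T \cup \Gamma(\bar a) \models \exists y\, \phi(\bar a, y)$. Once this claim is established, compactness produces a finite subconjunction of $\Gamma$ that is equivalent to $\exists y\, \phi(\bar x, y)$ modulo $T$ and that, by design, is quantifier-free.

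To prove the claim, I would fix any $\mathcal N \models T \cup \Gamma(\bar a)$ and produce a witness inside $\mathcal N$ by a detour through an auxiliary model. Specifically, the theory $T$ together with the quantifier-free diagram of the substructure $A \subseteq \mathcal N$ generated by $\bar a$ and the sentence $\exists y\, \phi(\bar a, y)$ must be consistent: any finite inconsistency would yield a quantifier-free formula over $\bar a$ that is forced to fail in $\mathcal N$ yet is implied by $T \cup \{\exists y\, \phi(\bar a, y)\}$, contradicting $\mathcal N \models \Gamma(\bar a)$. Any model $\mathcal M$ of this combined theory contains $A$ as a substructure shared with $\mathcal N$, and contains a witness $b \in \mathcal M$ with $\mathcal M \models \phi(\bar a, b)$. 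Applying the hypothesis of the fact to $\mathcal M$, $\mathcal N$, $A$, $\bar a$, $b$ then yields some $c \in \mathcal N$ with $\mathcal N \models \phi(\bar a, c)$, which is exactly what we needed.

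The main obstacle, and really the only step that is not routine bookkeeping, is the consistency verification for the auxiliary theory producing $\mathcal M$: one has to trace through a would-be finite inconsistency, identify exactly which quantifier-free formula gets negated in the process, and verify that its negation is indeed an element of $\Gamma(\bar a)$. With that verification in hand, the hypothesis of the fact acts as a black box that immediately closes the argument.
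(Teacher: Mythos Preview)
Your argument is correct and is the standard compactness-with-diagrams proof of this criterion. The paper does not actually prove this statement; it simply cites it as Corollary~3.1.6 in Marker's textbook, and your write-up is essentially the proof one finds there.
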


The need for $A$ to be a substructure (this is, containing all the constants and closed under functions) is clear, since if it is not closed under functions it is quite easy to find counterexamples. Let $\langle A\rangle^{\mathcal M}$ be the smallest subset of $\mathcal M$ containing $A$ and the constants, that is closed under functions. 

\begin{dfn}\label{embedding-def}
    Let $\mathcal M$ and $\mathcal N$ be $\mathcal L$-structures. We will say that a map $f$ from $\mathcal M$ to $\mathcal N$ is a \emph{embedding} if for every quantifier free formula $\theta(\bar x)$ and any $\bar a\in \mathcal M$ we have 
    \[
    \mathcal M\models \theta(\bar a)\Leftrightarrow \mathcal N\models \theta(f(\bar a))
    \]
\end{dfn}

The above criterion is easily seen to be equivalent to the following.

\begin{fact}
    Let $T$ be an $\mathcal L$-theory. Suppose that for all quantifier free formulas $\phi(\bar x, y)$ and all $\mathcal M, \mathcal N$ models of $T$, if $\bar a$ is a \emph{finite tuple} in $\mathcal M$ and $f:\langle \bar a\rangle^{\mathcal M}\rightarrow \mathcal N$ is an embedding, and $b\in \mathcal M$ is such that $\mathcal M\models \phi(\bar a, b)$, then there is some $c\in \mathcal N$ such that $\mathcal N\models \phi(f(\bar a), c)$.

    Then $T$ has quantifier elimination.
\end{fact}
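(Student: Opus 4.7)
The plan is to deduce this Fact directly from the preceding one (Marker's Corollary 3.1.6, stated just above it). The strategy is to verify that the embedding hypothesis stated here implies the common-substructure hypothesis of that earlier Fact; quantifier elimination then follows immediately by applying it.

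So I assume the embedding hypothesis and consider any common substructure $A$ of two models $\mathcal M, \mathcal N \models T$, together with a tuple $\bar a \in A$ and an element $b \in \mathcal M$ satisfying $\mathcal M \models \phi(\bar a, b)$; the goal is to produce $c \in \mathcal N$ with $\mathcal N \models \phi(\bar a, c)$. Since $A$ is a substructure of $\mathcal M$ containing the constants and the tuple $\bar a$, and since by definition $\langle \bar a\rangle^{\mathcal M}$ is the smallest such substructure, we have $\langle \bar a\rangle^{\mathcal M} \subseteq A$. The inclusion map $A \hookrightarrow \mathcal N$ preserves atomic formulas and their negations (this is what it means for $A$ to be a common substructure), hence preserves every quantifier-free formula. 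Restricting this inclusion to $\langle \bar a\rangle^{\mathcal M}$ therefore produces an $\mathcal L$-embedding $f: \langle \bar a\rangle^{\mathcal M} \to \mathcal N$ in the sense of Definition \ref{embedding-def}, and moreover $f(\bar a) = \bar a$. Applying the embedding hypothesis to this $f$ and to $b$ yields $c \in \mathcal N$ with $\mathcal N \models \phi(f(\bar a), c)$, which is the same as $\mathcal N \models \phi(\bar a, c)$ under the identification. This verifies the hypothesis of the previous Fact, and quantifier elimination follows.

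The only subtle point is ensuring that the inclusion of a common substructure really does preserve \emph{all} quantifier-free formulas rather than merely atomic ones, but this is standard: quantifier-free formulas are Boolean combinations of atomic formulas, and preservation of atomic formulas and their negations extends by induction to arbitrary Boolean combinations. So no real computation is involved; the argument is just a translation between two equivalent formulations of the same abstract criterion. For the converse direction asserted in the text (not needed to prove QE but used to justify calling the two criteria equivalent), one identifies $\langle \bar a\rangle^{\mathcal M}$ with its image $f(\langle \bar a\rangle^{\mathcal M})$ inside $\mathcal N$: this copy is a substructure of both $\mathcal M$ and $\mathcal N$ and thus a common substructure to which the earlier criterion applies, yielding the required $c$.
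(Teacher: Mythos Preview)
Your argument is correct and matches the paper's approach: the paper does not give a proof at all, merely asserting that this criterion ``is easily seen to be equivalent'' to the preceding Fact (Marker's Corollary~3.1.6), and you have supplied exactly the details of that equivalence. Your verification is clean and complete.
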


This can be made into a more algebraic notion using the concept of $\omega$-saturation.

\begin{dfn}
    Let $T$ be a theory. We will say that $\mathcal M\models T$ is \emph{$\omega$-saturated} if whenever we have a countable set of formulas $\{\phi_i(x,\bar a_i)\}_{i\in \omega}$ such that for every \emph{finite} choice $i_1, \dots , i_n$ we have 
\[
\mathcal M\models \exists x \bigwedge_{j=1}^n \phi_{i_j}(x,\bar a_{i_j}),
\]    
then there is some $b\in \mathcal M$ such that $\mathcal M\models \phi_{i}(b,\bar a_{i})$, for all $i$.
\end{dfn}

Intuitively, an $\omega$-saturated model is such that whenever realizing a countable set of formulas is finitely consistent (and therefore realized in the model), then there is an element of the model realizing them all. It is shown (in \cite{Mar} for example) that any theory has an $\omega$-saturated model. From this fact, using the definition of $\omega$-saturation and the concept of types, we get another criterion for quantifier elimination.

\begin{dfn}
    Let $\model$ be an $\lang$-structure, and let $A\subseteq \model$. Let $\lang_A$ be the language $\lang$ extended with constant symbols for each element $a\in A$. Denote by $Th_A(\model)$ the set of all $\lang_A$-formulas (without free variables) that are true in $\model$ (interpreting each constant symbol $a\in A$ naturally as the element $a$ in $\model$). A set $p$ of $\lang_A$-formulas in the free variables $x_1,\ldots,x_n$ is called an \emph{$n$-type} if $p\cup Th_A(\model)$ is consistent. For any tuple $\bar{m}\in \model$, we define its type in $\model$ over $A$, denoted $\tp^\model(\bar{m}/A)$ (simply written as $\tp^\model(m)$ if $A=\emptyset$), to be the set of all $\lang_A$-formulas $\phi(\bar{x})$ such that $\model\models \phi(\bar{m})$, and we define its \emph{quantifier free type}, denoted $\qftp^\model(\bar{m}/A)$, to be the subset of $\tp^\model(\bar{m}/A)$ that consists of the formulas with no quantifiers.
\end{dfn}

\begin{fact}\label{qefact}
    Let $T$ be an $\mathcal L$-theory.  Let $\mathcal M, \mathcal N$ be models of $T$ and $\bar a$ a \emph{finite tuple} in $\mathcal M$. Assume that $f:\langle \bar a\rangle^{\mathcal M}\rightarrow \mathcal N$ is an embedding, and that for every $b\in \mathcal M$ there is some $c\in \mathcal N$ such that the quantifier free type $\qftp^{\mathcal M}(\bar a, b)$ is equal to 
    $\qftp^{\mathcal N}(f(\bar a), c)$. 

    (This is, any quantifier free formula realized in $\mathcal M$ by $\bar a, b$ is realized in $\mathcal N$ by $f(\bar a), c$.)

    Then $T$ has quantifier elimination.
\end{fact}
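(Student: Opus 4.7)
The plan is to deduce Fact \ref{qefact} directly from the preceding single-formula criterion for quantifier elimination (the fact stated immediately above Definition \ref{embedding-def}). The hypothesis of Fact \ref{qefact} is visibly stronger than that of the preceding fact — it asks for preservation of the entire quantifier free type of $(\bar a, b)$ rather than of one quantifier free formula at a time — so the reduction should be essentially an unpacking of the definition of $\qftp$.

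Concretely, I will verify the premise of the preceding criterion. Fix a quantifier free formula $\phi(\bar x, y)$, models $\mathcal M, \mathcal N \models T$, a finite tuple $\bar a$ in $\mathcal M$, an embedding $f : \langle \bar a\rangle^{\mathcal M} \to \mathcal N$, and an element $b \in \mathcal M$ with $\mathcal M \models \phi(\bar a, b)$. By the hypothesis of Fact \ref{qefact}, pick $c \in \mathcal N$ with $\qftp^{\mathcal M}(\bar a, b) = \qftp^{\mathcal N}(f(\bar a), c)$. Since $\phi(\bar x, y)$ is itself a quantifier free $\mathcal L$-formula in the free variables $\bar x, y$ satisfied by $(\bar a, b)$ in $\mathcal M$, it belongs to $\qftp^{\mathcal M}(\bar a, b)$; by the equality of types it then belongs to $\qftp^{\mathcal N}(f(\bar a), c)$, so $\mathcal N \models \phi(f(\bar a), c)$. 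The preceding criterion now yields that $T$ has quantifier elimination.

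The only subtlety — hardly an obstacle — is to interpret $\qftp^{\mathcal M}(\bar a, b)$ in the sense given just before the statement: the set of quantifier free $\mathcal L$-formulas $\theta(\bar x, y)$ in the free variables $\bar x, y$, with no additional parameters, satisfied in $\mathcal M$ under the assignment $\bar x \mapsto \bar a$, $y \mapsto b$. Under this reading the membership $\phi(\bar x, y) \in \qftp^{\mathcal M}(\bar a, b)$ is immediate and the proof collapses to a one-line invocation of the preceding criterion. If one wanted to emphasize the connection to $\omega$-saturation suggested by the surrounding exposition, one could equivalently run the argument by passing to an $\omega$-saturated elementary extension of $\mathcal N$ and using finite consistency of the quantifier free conditions coming from $\qftp^{\mathcal M}(\bar a, b)$ to realize the required $c$; but this detour is unnecessary once the hypothesis of Fact \ref{qefact} is taken as given.
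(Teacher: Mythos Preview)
Your reduction is correct and matches the paper's approach: the paper does not give a detailed proof of this fact but simply presents it as a consequence of the preceding embedding criterion, with a one-line remark that it follows ``using the definition of $\omega$-saturation and the concept of types.'' As you observe, for the statement \emph{as literally written} no saturation is needed, since requiring equality of the full quantifier free type is a strictly stronger hypothesis than requiring a witness for each individual quantifier free formula; your final paragraph correctly identifies that $\omega$-saturation would only enter if one weakened the hypothesis to quantify over $\omega$-saturated $\mathcal N$ (which is in fact how the paper applies the criterion in the proof of Theorem \ref{qe}).
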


\subsection{The $b$-property}\label{subsection b-property}

We will prove that the following condition (the $b$-property) is sufficient to have $T$ eliminate quantifiers. 
As a consequence, fixing invariants that result in an expansion of $T_{lons}$ satisfying the $b$-property will make such a theory have quantifier elimination, which will make the theory complete.

\begin{dfn}
    Let $T\supseteq T_{lons}$ be an $\hat{\lang}$-theory. We say $T$ has the $b$-property if for every $\model,\nodel\models T$, with $\nodel$ $\omega$-saturated, any $\hat{\lang}$-substructure $\model_0\leq\model$, any embedding $f\colon M_0\to N$, and any $x\in M_b(\model)$, there exists $y\in M_b(\nodel)$ such that:
    \begin{itemize}
        \item the $M_b$ and $R$-residue types of $x$ (in $\mathcal M$) and of $y$ (in $\mathcal N$) coincide,
        \item the $M_a$ and $R$-residue types of $\alpha(x)$ and of $\alpha(y)$ coincide,
        \item if $l$ is a definable distance in $M_0$ less than $a$, then 
        \[\mathcal M\models x-\alpha(x)<l \Leftrightarrow \mathcal N\models y-\alpha(y)<f(l),\] and
        \item for any $m\in M_0$ we have 
        \[\mathcal M\models x<m \Leftrightarrow \mathcal N\models y<f(m);\] in other words, \[\tp^\model(x/M_0)\big\vert_<=\tp^\nodel(y/f(M_0))\big\vert_<.\]
    \end{itemize} 

    We will say that $y$ \emph{witnesses the $b$-property} for $x$.
\end{dfn}

Some considerations. First, by saturation of $\mathcal N$ we just need to show that given $x$ and $M_0$ as above, and any finite set of formulas involving $M_b$-residues and residues of $x$, $M_a$-residues and residues of $\alpha(x)$, the order type of $x-\alpha(x)$ over definable distances less than $a$, and the order type of $x$, the corresponding formulas in $\mathcal N$ are realized by some element.

Now, any finite set of formulas in the order type of an element can of course be reduced to an interval. So:

\begin{obs}
    $T\supseteq T_{lons}$ has the $b$-property if and only if for any $\model,\nodel\models T$, any $\hat{\lang}$-substructure $\model_0\leq\model$, any embedding $f\colon M_0\to N$, if a given system of equations over $M_0$ is realized by some element $x\in \mathcal M$, then the corresponding system of equations (via $f$) must be realized by some $y\in \mathcal N$.
\end{obs}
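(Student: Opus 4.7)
The plan is to prove the two directions separately, treating the forward implication (from $b$-property to realizability of systems) as essentially an unpacking of the definition, and the reverse direction as the substantive one that uses $\omega$-saturation of $\nodel$ together with the Chinese Remainder Theorem.

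For the forward direction, I would take a system of equations $\Phi(x)$ over $M_0$ realized by some $x\in M_b(\model)$, and apply the $b$-property to $x$ to obtain $y\in M_b(\nodel)$. Then I would check that each clause of $\Phi(x)$ transfers to $y$ under $f$: the coincidence of the $M_b$- and $R$-residue types of $x$ and $y$ takes care of the mod-$n$ congruence conditions on $x$; the coincidence of the $M_a$- and $R$-residue types of $\alpha(x)$ and $\alpha(y)$ handles the congruence conditions on $\alpha(x)$; the preservation of $x-\alpha(x) < l$ for every definable distance $l<a$ in $M_0$ yields $a_1 < y-\alpha(y) < f(a_2)$; and the preservation of $x<m$ for every $m\in M_0$ yields $f(b_1) < y < f(b_2)$.

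For the reverse direction I would fix any $x\in M_b(\model)$ and construct the partial type $p(y)$ over $f(M_0)$ consisting of $M_b(y)$ together with: the predicates $R^b_{n,r}(y)$, $R_{n,r}(y)$, $R_{n,r}(\alpha(y))$, $R^a_{n,r}(\alpha(y))$ matching the corresponding predicates that $x$ and $\alpha(x)$ satisfy in $\model$; the inequality $y-\alpha(y)\lessgtr f(l)$ matching the relationship between $x-\alpha(x)$ and $l$ in $\model$ for every definable distance $l<a$ in $M_0$; and the inequality $y\lessgtr f(m)$ matching the relationship between $x$ and $m$ for every $m\in M_0$. By $\omega$-saturation of $\nodel$, it suffices to show $p(y)$ is finitely satisfiable in $\nodel$, and then any realization of $p(y)$ is a witness for the $b$-property.

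The crux, and the step I would expect to require the most care, is showing that each finite fragment $p_0(y)\subseteq p(y)$ is captured by a single system of equations. The finitely many residue predicates in $p_0$ involve only finitely many moduli $n_1,\dots,n_k$, and by the Chinese Remainder Theorem (Proposition \ref{Predicates Behavior}) they can be consolidated into complete sets of $M_b$-, $M_a$-, and $R$-residues modulo $N=\mathrm{lcm}(n_1,\dots,n_k)$ for $y$ and for $\alpha(y)$. The finitely many inequalities on $y-\alpha(y)$ and on $y$ collapse to tightest intervals $a_1 < y-\alpha(y) < a_2$ and $b_1 < y < b_2$ with endpoints in $M_0$. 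The resulting conjunction is exactly an $N$-system of equations realized by $x$ in $\model$, so the hypothesis produces the required witness in $\nodel$; the rest is standard saturation.
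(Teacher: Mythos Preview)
Your proposal is correct and follows the same line as the paper's brief argument: use $\omega$-saturation of $\nodel$ to reduce the $b$-property to finite satisfiability, then observe that any finite fragment of the relevant type collapses to a single system of equations (intervals for the order conditions, a common modulus via CRT for the residue conditions). One small point you gloss over: the observation as stated drops the saturation hypothesis on $\nodel$, so in the forward direction you cannot directly ``apply the $b$-property'' unless $\nodel$ is saturated; the fix is to pass to an $\omega$-saturated elementary extension $\nodel'\succ\nodel$, realize the system there, and pull back by elementarity (since a system of equations is a single first-order formula with parameters in $f(M_0)$). The paper also leaves this implicit, merely remarking afterward that the characterization does not need saturation.
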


Notice that this characterization doesn't need saturation for $\mathcal N$. Also, in light of Remark \ref{finite primes}, we get the following:

\begin{obs}\label{finite primes b-property}
    If the constant $a$ in $T\supseteq T_{lons}$ is such that no prime divides $a$ infinitely many times (recall that this means that for every prime $p$ there is an $n$ such that $T\models \neg R_{{p^n},0}(a)$ then $T$ has the $b$-property if and only if for any $\model,\nodel\models T$, with $\nodel$ $\omega$-saturated, any $\hat{\lang}$-substructure $\model_0\leq\model$, any embedding $f\colon M_0\to N$, if a given reduced system of equations over $M_0$ is realized by some element $x\in \mathcal M$, then the corresponding system of equations (via $f$) must be realized by some $y\in \mathcal N$.
\end{obs}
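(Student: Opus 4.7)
My plan is to prove both directions separately, with the forward implication being essentially trivial and the backward implication relying on Remark \ref{finite primes} together with the Chinese Remainder portion of Proposition \ref{Predicates Behavior}.

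For the forward direction, if $T$ has the $b$-property, then by the preceding observation, every full system realized in $\model$ has its $f$-image realized in $\nodel$. Given any reduced system $\phi$ realized by $x\in\model$, I would form the full system $\phi'$ by adjoining the actual $M_a$-residues of $\alpha(x)$ as witnessed by $x$; applying the $b$-property yields a $y\in\nodel$ realizing $f(\phi')$, and in particular $y$ realizes the corresponding reduced system $f(\phi)$.

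For the backward direction I would exploit the hypothesis on $a$ to translate any full system at level $n$ into a reduced system at a strictly finer level $m$. Concretely, assume every reduced system realized in $\model$ has its $f$-image realized in $\nodel$, and let $x\in\model$ realize a full system $\Psi_n$ at level $n$. I would write $n=\prod_{i}p_i^{k_i}$ and let $n_i$ denote the maximum power of $p_i$ dividing $a$, which the hypothesis on $a$ guarantees to be finite. Then I would set $m:=\prod_{i}p_i^{k_i+n_i}$ and let $\Phi_m$ be the reduced system at level $m$ recording the actual $M_b$-residues of $x$ modulo $m$, the $R$-residues of $\alpha(x)$ modulo $m$, together with the same inequalities on $x$ and on $x-\alpha(x)$ appearing in $\Psi_n$. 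By hypothesis, some $y\in\nodel$ realizes $f(\Phi_m)$, and the goal would be to argue that this same $y$ already realizes the corresponding full system $f(\Psi_n)$.

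The key verification is that $y$ matches $x$ on the $M_a$-residues of $\alpha$ modulo $n$; the remaining data in $\Psi_n$ follow automatically since $n\mid m$. For this I would invoke Remark \ref{finite primes} prime by prime: for each $p_i$, since the maximum power of $p_i$ dividing $a$ is $n_i$, a complete set of $R$-residues of $\alpha(x)$ modulo $p_i^{k_i+n_i}$ determines a complete set of $M_a$-residues modulo $p_i^{k_i}$, and these local data glue via the Chinese Remainder portion of Proposition \ref{Predicates Behavior} to pin down the $M_a$-residues of $\alpha(x)$ modulo $n$. Since $y$ agrees with $x$ on all $R$-residues of $\alpha$ modulo $m$, it must therefore agree on the $M_a$-residues of $\alpha$ modulo $n$, so $y$ realizes $f(\Psi_n)$. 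I do not anticipate a genuine obstacle here: the argument is essentially bookkeeping, and the sole nontrivial ingredient is that the exponent bumps $n_i$ remain finite, which is precisely the content of the hypothesis on $a$.
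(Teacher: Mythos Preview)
Your proposal is correct and is exactly the argument the paper has in mind: the paper states the observation with no proof beyond the phrase ``in light of Remark \ref{finite primes},'' and your two directions---trivially extending a reduced system to a full one for the forward implication, and for the converse passing from level $n$ to a finer level $m=\prod_i p_i^{k_i+n_i}$ so that Remark \ref{finite primes} plus the Chinese Remainder part of Proposition \ref{Predicates Behavior} recover the $M_a$-residues of $\alpha(x)$ modulo $n$---spell out precisely this. The only point worth making explicit is that the determination in Remark \ref{finite primes} is the same in $\model$ and $\nodel$ because $a$ is a constant of $\hat{\lang}$ and the embedding $f$ preserves all $R_{n,r}$ predicates, so $a^{\model}$ and $a^{\nodel}$ have identical residue types; you implicitly use this when concluding that $y$ inherits the correct $M_a$-residues.
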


\subsection{Quantifier elimination}

Assuming it satisfies the $b$-property, we will prove quantifier elimination for the theory $T$ of 2-semigroups with infinite $a$ and $b$, and fixed invariants using the criterion in Fact \ref{qefact}. This is, given an embedding $f$ from a substructure $M_0$ to a saturated $\mathcal N$, and any $x\in \mathcal M$, we can extend $f$ to $\langle M_0, x\rangle$. This will be done in four stages. We will first prove the result whenever $x\in \mathcal M$ is such that 

\[M_b\left(\langle M_0, x\rangle\right)=M_b\left( M_0\right)\] and 
\[M_a\left(\langle M_0, x\rangle\right)=M_a\left( M_0\right).\]

We will then prove the extension of the embedding whenever $x\in M_a$ is such that 
\[M_b\left(\langle M_0, x\rangle\right)=M_b\left(\langle M_0\rangle\right).\]

These two cases will follow mainly from quantifier elimination of Presburger Arithmetic. 

We will then show we can extend the embedding for any $x\in M_b$. This is where the $b$-property hypothesis is used, and it is the heart of the proof. 

Lastly, we will use a standard argument using an enumeration of $\langle M_0, x\rangle$ to show that the above cases imply that one can extend the embedding $f$ for any $x\in \mathcal M$.

Regarding the extension of this embedding $f$, notice the following:
\begin{rem}\label{q_0 en M_a y M_b}
    If $r_0=r(a,b)\neq 0$, then there is no $x\in M_a(\model)\setminus M_0$ such that $M_b(\langle M_0,x\rangle)=M_b(M_0)$. If $x$ is such that $M_b(\langle M_0,x\rangle)=M_b(M_0)$, then we must have $\beta(x)\in M_0$, so that $\alpha(\beta(x))\in M_0$. Since $\beta(x)-b < x <\beta(x)$, we get that $\alpha(\beta(x)-b)=\alpha(\beta(x))-\alpha(b)+\epsilon(b) <  x \leq \alpha(\beta(x))$. Since $r_0\neq 0$, there is $n\in \N$ such that $\alpha(b)=n(a)$, which implies that $x\in M_0$.
\end{rem}

\subsubsection{The easy (and general) cases}

\begin{prop}\label{t-prop}
    Let $T\supseteq T_{lons}$ be an $\lang$-theory, $\model,\nodel\models T$, and $\phi\colon M_0\subseteq\model\to\nodel$ be an embedding, with $\model$ countable, and $\nodel$ saturated. Then, for every $x\in\model$ such that $M_a(\langle{x,M_0}\rangle)=M_a(M_0)$ and $M_b(\langle{x,M_0}\rangle)=M_b(M_0)$, there is $y\in\nodel$ such that $\tp^\model(x/M_0)=\tp^\nodel(y/\phi(M_0))$.
\end{prop}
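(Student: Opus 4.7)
The plan is to pin down $\tp^\model(x/M_0)$ using the structural consequences of the hypothesis, and then invoke the $\omega$-saturation of $\nodel$ to realize the corresponding type.

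First, I would extract the canonical ingredients of $x$. The hypothesis $M_a(\langle x, M_0\rangle) = M_a(M_0)$ forces $\pi_a(x)\in M_0$, since $\pi_a(x)$ is always in $M_a$ and lies in $\langle x, M_0\rangle$; similarly $\pi_b(x)\in M_0$. Set $x_a:=\pi_a(x)$ and $x_b:=\pi_b(x)$. By unique decomposition one has $x=k(ab)+x_a+x_b$ for some (possibly non-standard) natural number $k$ in $\model$. If $k$ is standard then $x\in\langle M_0\rangle$ and $y:=\phi(x)$ suffices, so the substantive case is when $k$ is non-standard.

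Next, I would argue that $\tp^\model(x/M_0)$ is determined by three ingredients: (i) the fixed values $\pi_a(x)=x_a$ and $\pi_b(x)=x_b$; (ii) the residue conditions $R_{n,r}(x)$ for each $n,r$, which by Remark \ref{r-residue determined} together with the known residues of $ab$, $x_a$ and $x_b$ reduce to specifying $k\bmod n$; and (iii) the cut of $x$ in $M_0$, which, once $x_a$ and $x_b$ are fixed, translates into an order constraint on $k$ relative to the (possibly non-standard) levels of elements of $M_0$. A candidate $y\in\nodel$ must therefore take the form $K(ab)+\phi(x_a)+\phi(x_b)$ for some level $K$ in $\nodel$ satisfying the transported residue and order constraints.

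Finally, $\omega$-saturation of $\nodel$ reduces the task to exhibiting such a $K$ for any finite conjunction of these constraints: a system of finitely many congruences $K\equiv k\pmod{n_i}$ together with finitely many interval constraints relative to the levels of chosen $\phi(m_j)$. This system is solvable in $\nodel$ because it is solved by $k$ in $\model$ and both structures satisfy the bounded Presburger axioms (axiom 5 of Definition \ref{dfn axioms}) that govern the multiples-of-$ab$ substructure; quantifier elimination for Presburger (Facts \ref{Presburger} and \ref{Garcia}), transported through $\phi$, then produces the required $K$. The main obstacle is verifying the joint compatibility of residue constraints and interval constraints on $K$, but this is exactly what the completeness and QE of Presburger arithmetic provide, which is why the hypothesis that $\langle x, M_0\rangle$ does not enlarge $M_a$ or $M_b$ was essential: it confines the new type data to a single Presburger-controlled ``level'' parameter.
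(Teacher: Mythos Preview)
Your high-level plan---reduce to Presburger-type data and invoke saturation---is exactly the paper's strategy, but your opening move does not go through. The partial functions $\pi_a^n,\pi_b^n$ have domain $[n(ab),(n+1)(ab))$ for \emph{standard} $n\in\N$, so $\pi_a(x),\pi_b(x)$ are only defined when $x\in\mathbb U(\model)=\bigcup_{n\in\N} n(ab)+M_a+M_b$. But the hypotheses force precisely the opposite: if $x$ lay in some $[n(ab),(n+1)(ab))$ then its unique decomposition $x=n(ab)+x_a+x_b$ would have $x_a\in M_a(M_0)$ and $x_b\in M_b(M_0)$, so $x\in M_0$. Thus in the only nontrivial case $x\notin\mathbb U(\model)$, and your extraction of $x_a,x_b$ never happens. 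Nor is there a ``non-standard $k$ in $\model$'' making $k(ab)$ meaningful: the language has no multiplication by elements of the model, so the decomposition $x=k(ab)+x_a+x_b$ simply does not exist in this framework.

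The paper sidesteps this by arguing more abstractly. From $M_a(\langle x,M_0\rangle)=M_a(M_0)$ and $M_b(\langle x,M_0\rangle)=M_b(M_0)$ one gets that $\langle x,M_0\rangle$ contains no new elements below $ab$ (indeed no new elements of $\mathbb U$), so the domains of $\alpha,\beta,\pi_a,\pi_b$ do not grow and $\langle x,M_0\rangle_{\hat{\mathcal L}}=\langle x,M_0\rangle_{\{<,+,-,\frac{\cdot}{n}\}}$. It therefore suffices to match the quantifier-free type of $x$ over $M_0$ in the reduct $\mathcal L_{Pres}=\{<,+,-\}\cup\{\frac{\cdot}{n}\}_n\cup\{R_{n,r}\}_{n,r}$. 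Since $x>ab$, both $x$ and the relevant parameters live in the part of the semigroup past the conductor, which behaves as (a ray of) a model of Presburger arithmetic; saturation of $\nodel$ together with Presburger quantifier elimination then produces $y$ directly, with no need to name a ``level'' parameter.
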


\begin{proof}
Since $M_a(\langle{x,M_0}\rangle)=M_a(M_0)$ and $M_b(\langle{x,M_0}\rangle)=M_b(M_0)$, by unique decomposition 
\[
\{w\in \langle{x,M_0}\rangle\mid w\leq ab\}=
\{w\in \langle{M_0}\rangle\mid w\leq ab\}.  
\]
Since the domains of the functions $\alpha, \beta, \pi_a$ and $\pi_b$ are precisely the elements in $\model$ less than or equal to $ab$, we have that 
$\langle{x,M_0}\rangle=\langle{x,M_0}\rangle_{<,+,\frac{\cdot}{n}, -}$. This also implies that we don't have to worry about $M_a$-congruences nor $M_b$-congruences. In short, if we define 
$\mathcal L_{Pres}:=\{<,+,-\}\cup \{\frac{\cdot}{n}\}_{n\in \N}\cup \{R_{n,r}\}_{r,n\in \N, r<n}$ then it is enough to find $y\in \mathcal N$ such that 
\[
\qftp(x/M_0)\restriction_{\mathcal L_{Pres}}=\qftp(y/f(M_0))\restriction_{\mathcal L_{Pres}}
\]

But these are all predicates definable in Presburger Arithmetic. Since by unique decomposition we must have that $ab<x$ and elements past $ab$ in both $\mathcal M$ and $\mathcal N$ behave as rays of a model of Presburger arithmetic, saturation of $\mathcal N$ implies the existence of $y$. \end{proof}


When the ratio $q_0 = 0$, it is possible to extend a model of $T_{lons}$ to have new multiples of $a$ without having new multiples of $b$. For such cases, analogously to the $b$-property, we can find a suitable multiple of $a$ in a saturated model that allows a natural extension of an embedding between structures. Formally:

\begin{prop}\label{a-prop}
    Let $q_0 = 0$, and let $T\supseteq T_{lons}$ be an $\hat{\lang}$-theory, $\model,\nodel\models T$, and $f\colon M_0\subseteq\model\to\nodel$ be an embedding, with $\model$ countable, and $\nodel$ saturated. Then, for every $x\in M_a(\model)\setminus M_0$ such that $M_b(\langle{x,M_0}\rangle)=M_b(M_0)$, there is $y\in\nodel$ such that $\qftp^\model(x/M_0)=\qftp^\nodel(y/f(M_0))$  (in particular, for any definable distance $l$ in $M_0$ less than $b$, $M\models \beta(x)-x<l\Leftrightarrow \nodel \models \beta(y)-y<f(l)$).
\end{prop}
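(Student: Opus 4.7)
The plan is to find $y$ by working inside the bounded-Presburger structure on $M_a(\nodel)$, in a manner parallel to how Presburger arithmetic on the ``tail'' past $ab$ was used in the proof of Proposition~\ref{t-prop}.

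First, I would identify what $\qftp^\model(x/M_0)$ consists of. Since $x\in M_a(\model)\setminus M_0$ and $M_b(\langle x,M_0\rangle)=M_b(M_0)$, applying $\beta$ to $x$ must land in $M_b(M_0)$; call the result $\beta_0$. Unique decomposition then shows that the $\hat{\lang}$-quantifier-free data of $x$ over $M_0$ reduce to: (a) the bounded-Presburger quantifier-free type of $x$ over $M_a(M_0)$ (order, $R^a_{n,r}$-residues, partial subtraction and division by $n$), and (b) the inequalities $\beta_0-b<x<\beta_0$. The $R_{n,r}$-residues of $x$ are then implied by (a) together with the residues of $a$ via Remark~\ref{r-residue determined}, so no data is lost.

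Second, I would produce $y$ by realizing the translated type in $\nodel$. By axiom~5 of $T_{ons}$, $M_a(\nodel)$ is a model of bounded Presburger, and $f$ restricts to an embedding of $M_a(M_0)$ into it. Fact~\ref{Garcia} says that quantifier-free types imply types in bounded Presburger, so the partial type $q(Y)$ over $f(M_a(M_0))\cup\{f(\beta_0)\}$ obtained by translating (a) through $f$ and appending the clause $f(\beta_0)-b<Y<f(\beta_0)$ is finitely satisfiable: any finite fragment is witnessed in $\model$ by $x$, and since $f$ preserves quantifier-free formulas, the fragment is witnessed in $\nodel$ as well. The hypothesis $q_0=0$ is essential here: it supplies $na<b$ for every $n\in\N$, so the interval $(f(\beta_0)-b,f(\beta_0))$ contains enough multiples of $a$ to accommodate any prescribed residue data. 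By $\omega$-saturation of $\nodel$, some $y\in M_a(\nodel)$ realizes $q$.

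Third, I would verify that this $y$ realizes the full $\hat{\lang}$-quantifier-free type of $x$ over $M_0$. The values of $\alpha,\pi_a,\pi_b$ at $y$ are the trivial ones; $\beta(y)=f(\beta_0)$ follows from $f(\beta_0)-b<y<f(\beta_0)$ together with the absence of a multiple of $b$ strictly inside that interval. Any order or subtractability statement between $y$ and a general $m=n(ab)+m_a+m_b\in M_0$ reduces, via unique decomposition and the hypothesis that adding $y$ produces no new multiples of $b$, to a Presburger statement about $y$ and $m_a$ in $M_a$, which is already controlled by $q$. In particular, for any definable distance $l<b$ in $M_0$, $\beta(x)-x<l$ iff $\beta(y)-y<f(l)$. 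The main obstacle is verifying that the Presburger type $q$ is consistent over $f(M_a(M_0))$ in $\nodel$; this hinges on Fact~\ref{Garcia} and on $q_0=0$ supplying the geometric room between consecutive multiples of $b$. Once consistency is established, saturation and the decomposition bookkeeping finish the argument.
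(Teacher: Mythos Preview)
Your approach is essentially the paper's: both reduce to realizing a bounded-Presburger type inside $M_a(\nodel)$ via Fact~\ref{Garcia} and saturation, then verify that the remaining $\hat{\lang}$-data follow. The paper phrases the existence of $y$ purely in terms of the $\{<,R^a_{n,r}\}$-type over $M_a(f(M_0))$ and only afterward deduces $\beta(y)=f(\beta(x))$ from that order type, whereas you build the constraint $f(\beta_0)-b<Y<f(\beta_0)$ into the type $q$ from the start; this is harmless but redundant, since that interval constraint is already forced by the $M_a$-order data over $M_a(M_0)$ (as the paper argues via $x<w \Leftrightarrow x\leq \alpha(w)$).

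Two places to tighten. First, your finite-satisfiability sentence ``since $f$ preserves quantifier-free formulas, the fragment is witnessed in $\nodel$ as well'' is not literally correct: $f$ is only defined on $M_0$, so it cannot carry the witness $x$ across. What you need (and what your citation of Fact~\ref{Garcia} suggests you intend) is that $\exists Y\,\phi(Y,\bar m)$ is equivalent, in bounded Presburger, to a quantifier-free statement about $\bar m\in M_a(M_0)$, and \emph{that} is what $f$ transfers. Relatedly, the parameter $f(\beta_0)$ lies outside $M_a$, so Fact~\ref{Garcia} does not apply to $q$ as written; you should first rewrite the interval constraint in $M_a$ terms (e.g.\ via $\alpha(f(\beta_0))$), which is exactly why the paper works over $M_a(f(M_0))$ alone. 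Second, for the ``in particular'' clause, the paper gives the explicit calculation: any definable distance $l<b$ has the form $\beta(m_a)-m_a$ for some $m_a\in M_a(M_0)$ by Theorem~\ref{Axiom 12}, and then $\beta(x)-x<\beta(m_a)-m_a$ rearranges to an $M_a$-order inequality in $x$. You assert this reduction without detail; that is acceptable as a sketch, but the paper's computation is where the work actually happens.
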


\begin{proof}
    Since $M_a(\nodel)$ is a model of bounded Presburger by Fact \ref{Presburger}, there exists $y\in \nodel$ with the same $\{<,M_a\}$-type over $M_a(f(M_0))$ as $x$ over $M_a(M_0)$. Now, notice that given an element of $M_0$ with unique decomposition $m(ab)+w_a+w_b$, we have that $x < m(ab)+w_a+w_b$ if and only if $x \leq m(ab)+w_a+\alpha(w_b)$, and since $m(ab)+w_a+\alpha(w_b)\in M_a^*(M_0)$, the order type of $x$ over $M_0$ is determined by its order type over $M_a(M_0)$, and similarly for $y$. Hence, the order type of $x$ over $M_0$ is that of $y$ over $f(M_0)$. By Theorem \ref{Axiom 12}, any definable distance in $M_0$ less than $b$ is of the form $\beta(m_a)-m_a$ for some $m_a\in M_a(M_0)$, so (assuming $\beta(m_a) < \beta(x)$, which implies $m_a < x$)
    $$\model \models \beta(x)-x < \beta(m_a)-m_a \Leftrightarrow \model \models \beta(x)-\beta(m_a) < x-m_a$$
    which happens if and only if
    $$\model \models \alpha(\beta(x)-\beta(m_a))+m_a\leq x$$
    and since $\beta(x)\in M_0$, such inequality is determined by the order type of $x$ over $M_a(M_0)$. The case $\beta(m_a) > \beta(x)$ is similar. The same reasoning applies to $y$, so the definable distances statement is proved. There are still formulas that need to be verified in order to conclude that $\qftp^\model(x/M_0)=\qftp^\nodel(y,f(M_0))$. Those follow a similar reasoning to Lemma \ref{inequalities} and Lemma \ref{localiso}.
\end{proof}

\subsubsection{Extending an embedding for $x\in M_b$}
As can be seen in Proposition \ref{t-prop}, any $x$ which does not introduce new elements into $M_a$ or $M_b$ is easier to handle, since it doesn't add anything to the domains of $\pi_a, \pi_b$ nor do we need to worry about the $M_a$- and $M_b$-residues. 

With this in mind, we will give a notation for the domain of elements with unique decomposition. This is the countable union of definable subsets of a limit 2-semigroup $\mathcal M$: 
\[
\mathbb U=\bigcup_{n\in \N} nab+M_a+M_b.
\]

We will also relativize $\mathbb U$ to substructures. Given a substructure $M_0$ of a limit 2-semigroups $\mathcal M$ we will define 
\[
\mathbb U(M_0)=\bigcup_{n\in \N} nab+M_a(M_0)+M_b(M_0).
\]

Notice that any element in $\mathbb U$ is in the domain of the functions $\pi_a^n$ and $\pi_b^n$ for some $n\in \N$.  In the proofs that follow, we will abuse notation and omit the superscript on the $\pi_a^n$ and $\pi_b^n$ functions. The proofs and lemmas presented with an unspecified superscript can be seen formally as schemes of proofs and lemmas done for all possible superscripts on such functions.

\medskip

The following two lemmas allow us to reduce the amount of formulas we need to inspect in order to understand the structure of a limit 2-semigroup by telling us precisely how $M_a(M_0)$ and $M_b(M_0)$ change when we add an element from $M_b$ to the substructure $M_0$.

\begin{lemma}\label{fractions}
    Let $\model\models T_{lons}, M_0\leq \model$, and let $x\in M_b(\model)$. We have that $M_b\langle x,M_0\rangle_{\hat{\lang}}=M_b\langle x,M_0\rangle_{\{+,-,\frac{\cdot}{n}\}}$, and $M_a\langle x,M_0\rangle_{\hat{\lang}}=M_a\langle\alpha(x),M_0\rangle_{\{+,-,\frac{\cdot}{n}\}}$.
\end{lemma}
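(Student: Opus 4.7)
The inclusions $\supseteq$ of both equalities are immediate, since $+, -, \frac{\cdot}{n}$ are symbols of $\hat{\lang}$ and $\alpha(x) \in \langle x, M_0\rangle_{\hat{\lang}}$. For the reverse inclusions, I would prove by induction on the construction of terms in $\hat A := \langle x, M_0\rangle_{\hat{\lang}}$ the following stronger statement $(\star)$: every $y \in \hat A$, with unique decomposition $y = m(ab) + y_a + y_b$, satisfies $y_a \in G_a := M_a\langle\alpha(x), M_0\rangle_{\{+,-,\frac{\cdot}{n}\}}$ and $y_b \in G_b := M_b\langle x, M_0\rangle_{\{+,-,\frac{\cdot}{n}\}}$. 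From $(\star)$ the lemma follows at once: an element of $\hat A$ lying in $M_b$ (resp.\ $M_a$) has $m = 0$ and $y_a = 0$ (resp.\ $y_b = 0$), and therefore coincides with its own $b$-component (resp.\ $a$-component). The base cases $y = x$ and $y \in M_0$ are immediate, using for the latter that $M_0$ is an $\hat{\lang}$-substructure and hence closed under $\pi_a, \pi_b$.

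For the inductive step for $+$, a componentwise analysis of $y = y_1 + y_2$ shows that the new $a$-component is either $(y_1)_a + (y_2)_a$ (when this sum is still in $M_a$) or else $(y_1)_a - (ab - (y_2)_a)$ (when the raw sum overflows); both expressions lie in $G_a$ via closure under $+$ and $-$ together with $ab \in M_a(M_0)$, and symmetrically for the $b$-component. The $\hat{\lang}$-symbols $-$ and $\frac{\cdot}{n}$ act only on elements of $M_a \cup M_b$, so those inductive steps are immediate from the corresponding closure of $G_a$ and $G_b$; and $\pi_a(y) = y_a$, $\pi_b(y) = y_b$ are direct from the inductive hypothesis. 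The heart of the proof is therefore the cases of $\alpha$ and $\beta$, which are defined only for $y \in [0, ab]$ (so $m = 0$) and satisfy $\alpha(y) = y_a + \alpha(y_b)$ and $\beta(y) = \beta(y_a) + y_b$. These identities reduce the step to two sub-claims: $\alpha(y_b) \in G_a$ for every $y_b \in G_b$, and $\beta(y_a) \in G_b$ for every $y_a \in G_a$.

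I would prove these sub-claims by parallel inductions on the term structure of $y_b$ and $y_a$. The bases $y_b = x$ and $y_a = \alpha(x)$ give respectively $\alpha(x) \in G_a$ tautologically and $\beta(\alpha(x)) = x \in G_b$ via Remark \ref{alphas and betas functions}; the bases $y_b \in M_b(M_0)$ and $y_a \in M_a(M_0)$ use that $M_0$ is closed under $\alpha$ and $\beta$. The inductive steps for $+$ and $-$ follow from Remark \ref{alphafunction}, which yields $\alpha(y_1 \pm y_2) = \alpha(y_1) \pm \alpha(y_2) + \epsilon a$ with $\epsilon \in \{-1, 0, 1\}$, and the analogous identity for $\beta$. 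The subtlest case, and the main obstacle I expect, is division: for $y_b = y_1/n$ with $R_{n,0}^b(y_1)$, one verifies that $n \cdot \alpha(y_1/n)$ is the greatest multiple of $na$ less than or equal to $y_1$, which yields the explicit formula $\alpha(y_1/n) = (\alpha(y_1) - r(a))/n$, where $r < n$ is the unique value with $R_{n,r}^a(\alpha(y_1))$. The right-hand side lies in $G_a$ because $\alpha(y_1) \in G_a$ by the inductive hypothesis, $r(a) \in M_a(M_0)$, and $\alpha(y_1) - r(a) \in R_{n,0}^a(M_a)$ by the very meaning of $R_{n,r}^a(\alpha(y_1))$. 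An analogous identity using $R_{n,\cdot}^b$ handles the $\beta$-claim, and it is precisely here that the residue predicates of $\hat{\lang}$ are used in an essential way.
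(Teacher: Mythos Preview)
Your proof is correct and follows essentially the same strategy as the paper: an induction on term complexity that tracks the $M_a$- and $M_b$-components, using Remark~\ref{alphafunction} for the additive steps and the key identity $\alpha(z/n)=(\alpha(z)-r\cdot a)/n$ (with $R^a_{n,r}(\alpha(z))$) for the division step. The paper packages things slightly differently---it folds $\alpha(t),\beta(t)$ into the main inductive hypothesis rather than proving your separate sub-claims, and it also tracks elements outside $\mathbb{U}$ (writing them as $t_0+u_1-u_2$ with $t_0\in M_0\setminus\mathbb{U}(M_0)$), a strengthening you can safely omit for this lemma but which the paper reuses in Lemma~\ref{localiso}.
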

\begin{proof}
  The proof will follow from the following claim.
    
    \begin{claim}
       Let $t\in \langle x,M_0\rangle_{\hat{\lang}}$. We have the following:
        \begin{itemize}
            \item  If $t\in\mathbb{U}\langle x,M_0\rangle_{\hat{\lang}}$ then 
            \[\pi_a(t),\alpha(t)\in M_a\langle \alpha(x),M_0\rangle_{\{+,-,\frac{\cdot}{n}\}},\] and  
            \[\pi_b(t),\beta(t)\in M_b\langle x,M_0\rangle_{\{+,-,\frac{\cdot}{n}\}}.\] 
        \item If $t\not\in\mathbb{U}\langle x,M_0\rangle_{\hat{\lang}}$ then $t=t_0+u_1-u_2$, where $t_0\in M_0\setminus \mathbb{U}(M_0)$, and $u_1,u_2\in \mathbb{U}\langle x,M_0\rangle_{\hat{\lang}}$ are such that $\pi_a(u_1),\pi_a(u_2),\alpha(u_1),\alpha(u_2)\in M_a\langle \alpha(x),M_0\rangle_{\{+,-,\frac{\cdot}{n}\}}$ and $ \pi_b(u_1),\pi_b(u_2),\beta(u_1),\beta(u_2)\in M_b\langle x,M_0\rangle_{\{+,-,\frac{\cdot}{n}\}}$.
        \end{itemize} 
    \end{claim}

\begin{claimproof}

The proof will be by induction on the length of the terms starting with all the elements of $M_0$ as constants. If $t=x$ or $t\in M_0$, the claim is evidently true.

\medskip
    
     Assume now that the claim holds for $t_1,t_2$. We will prove that all the elements generated by $t_1,t_2$ with a single use of a function in the language satisfy the claim. We will proceed by cases.  
     
     If $t_1,t_2\in\mathbb{U}\langle x,M_0\rangle_{\hat{\lang}}$, then we have $t_1=n(ab)+t_a+t_b, t_2=n'(ab)+t_a'+t_b'$, with $t_a,t_a',\alpha(t_b),\alpha(t_b')\in M_a\langle \alpha(x),M_0\rangle_{\{+,-,\frac{\cdot}{n}\}}$, and $t_b,t_b',\beta(t_a),\beta(t_a')\in M_b\langle x,M_0\rangle_{\{+,-,\frac{\cdot}{n}\}}$. By Remark \ref{alphafunction} we have that $\alpha(t_1+t_2)=\alpha(t_1)+\alpha(t_2)+\epsilon(a)$, where $\epsilon=0$ or $1$, and similarly for $\alpha(t_1-t_2)$, and the $\beta$ function. It's clear then that $t_1+t_2, t_1-t_2$ satisfy the claim, as well as $\alpha(t_1), \beta(t_1), \pi_a(t_1),\pi_b(t_1)$. Now, for the case $\frac{t_1}{n}$, note that $\alpha(t_1)-res_n^a(\alpha(t_1))(a) \leq t_1 < \alpha(t_1)+n(a)-res_n^a(\alpha(t_1))(a)$, so that $\frac{\alpha(t_1)-res_n^a(\alpha(t_1))(a)}{n}=\alpha(\frac{t_1}{n})$, where 
     for $\alpha\in M^a$ we define $res_n^a(k(ab)+\alpha):=res_n^a(k(ab))+res_n^a(\alpha)$. Analogously for the $\beta$ function. It follows that $\frac{t_1}{n}$ also satisfies the claim.

     If $t_1\not\in\mathbb{U}\langle x,M_0\rangle_{\hat{\lang}}$, but $t_2\in\mathbb{U}\langle x,M_0\rangle_{\hat{\lang}}$, the terms $t_1+t_2$ and $t_1-t_2$ clearly satisfy the claim. For the case $\frac{t_1}{n}$, by induction hypothesis let $t_0, u_1$ and $u_2$ be elements such that $t_1=t_0+u_1-u_2$ with the properties stated in the claim for this case. Let $0\leq i,j,i',j'<n$ be such that $\frac{n(ab)+n(u_1)+i(a)+j(b)}{n},\frac{n(ab)+n(u_1')+i'(a)+j'(b)}{n}$ both exist. We have that $\frac{t_1}{n}=\frac{t_0-(i-i')(a)+(j-j')(b)}{n} + \frac{n(ab)+n(u_1)+i(a)+j(b)}{n}-\frac{n(ab)+n(u_1')+i'(a)+j'(b)}{n}$ (the first fraction exists since the $R$-residue predicate is compatible with the sum and $R_{n,0}(t_1)$ holds), which satisfies the claim. 
     
     The last case is when $t_1,t_2\not\in\mathbb{U}\langle x,M_0\rangle_{\hat{\lang}}$. Again, by induction hypothesis, let $t_1=t_0+u_1-u_2 \geq t_2=t_0'+u_1'-u_2'$, be as stated on the claim. That $t_1+t_2$ satisfy the claim is immediate. For $t_1-t_2$, we have that $t_1-t_2=(t_0-t_0')+(u_1+u_1')-(u_2+u_2')$. If $t_0-t_0'$ exists, such element is in $M_0$ (since both $t_0,t_0'$ are in $M_0$), and so the claim will be satisfied. If $t_0-t_0'$ doesn't exist, we can write $t_1-t_2=(t_0+ab-t_0')+(u_1+u_1')-(ab+u_2+u_2')$ (where $t_0+ab-t_0'$ exists since their difference is greater than or equal to $ab$, and belongs to $M_0$), which clearly satisfies the claim. 
\end{claimproof}

Since by construction $M_a\langle\alpha(x),M_0\rangle_{\{+,-,\frac{\cdot}{n}\}}$ is closed under $+, -, \frac{\cdot}{n}$, that $M_a\langle x,M_0\rangle_{\hat{\lang}}=M_a\langle\alpha(x),M_0\rangle_{\{+,-,\frac{\cdot}{n}\}}$ follows immediately from the definition of $\hat{\lang}$ and the claim. Similarly for  $M_b\langle x,M_0\rangle_{\hat{\lang}}$.
\end{proof}

\begin{lemma}\label{inequalities}
    Let $\model,\nodel\models T_{lons}$ with $\nodel$ saturated, $M_0\leq \model$ be an $\hat{\lang}$-substructure, $f\colon M_0\to \nodel$ an embedding, $x\in M_b(\model)\setminus M_0$, and let $y\in \nodel$ be a corresponding witness of the $b$-property for $x$. Then, any inequality between linear combinations (over $\mathbb{Z}$) of $x$ and $\alpha(x)$ and elements of $M_0$ holds in $\model$ if and only if the inequality comparing $y$ and $\alpha(y)$ with the corresponding elements in $f(M_0)$ holds in $\nodel$.
\end{lemma}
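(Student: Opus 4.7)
The plan is to reduce any $\mathbb{Z}$-linear inequality to a canonical form, and then show that each of the pieces appearing in that form is controlled by data which the $b$-property transfers from $\mathcal{M}$ to $\mathcal{N}$.

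First, by moving terms across the inequality, it is enough to treat the case
\[
 p\, x + q\, \alpha(x) + m \;<\; p'\, x + q'\, \alpha(x) + m',
\]
with $p,q,p',q'\in\mathbb{N}$ and $m,m'\in M_0$. Using the identity $\alpha(x) = x - \delta$ where $\delta := x - \alpha(x)$ is a definable distance below $a$ (Proposition~\ref{basic facts}(6)), the inequality rewrites as a comparison between $(p+q)x + m$ and $(p'+q')x + m'$, shifted by multiples of $\delta$. Set $k := p+q$ and $k' := p'+q'$; the truth of the inequality is then governed by the pair $(k,k')$, the elements $m,m'$, and bounded multiples of $\delta$.

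Next I split on $k$ versus $k'$. If $k = k'$ the $x$-terms cancel and we are left with comparing $m + q'\delta$ and $m' + q\delta$: a purely distance-level statement. Applying Theorem~\ref{Axiom 12} to convert multiples of $\delta$ into the unique-decomposition data of $M_a$-elements near $\alpha(x)$, this comparison depends only on (a) the $M_a$-, $M_b$-, and $R$-residues of $x$ and $\alpha(x)$, and (b) the order type of $\delta$ against the definable distances $<a$ present in $M_0$. All of these are preserved by the $b$-property. If $k \neq k'$, say $k' > k$, the inequality is equivalent to $(k'-k)x$ being greater than an expression of the form $(m - m') + (q - q')\delta$ (interpreted in the semigroup). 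The dominant term $(k'-k)x$ is a positive multiple of $x$, and its order against any element of $\langle M_0\rangle_{\hat{\mathcal{L}}}$ is pinned down by the order type of $x$ over $M_0$ (preserved); the correction $(q-q')\delta$ lives in an interval of width at most $|q-q'|\cdot a$, so away from the boundary it does not perturb the comparison.

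The main obstacle is the boundary of case $k\neq k'$: when $(k'-k)x$ lands within one $a$-step of $(m-m') + (q-q')\delta$, one must decide on which side of the inequality it falls. Here I would use unique decomposition (Proposition~\ref{basic facts}(3)) to split $(k'-k)x$ and the right-hand side into their $M_a$-, $M_b$-, and $(ab)$-components exactly as in the case-analysis of Lemma~\ref{fractions}, and then invoke the $M_b$- and $R$-residues of $x$ (resp.\ the $M_a$- and $R$-residues of $\alpha(x)$) to read off the $R_{k'-k,\cdot}$-class of the dominant term. Since all of this residue data and the relevant order type information transfer under the $b$-property from $x$ to $y$ and from $\alpha(x)$ to $\alpha(y)$, the boundary is resolved the same way in $\mathcal{M}$ and in $\mathcal{N}$, which completes the proof.
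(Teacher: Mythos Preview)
Your case split on whether the total $x$-coefficient $k=p+q$ equals $k'=p'+q'$ matches the paper's dichotomy $c_1+c_2=c_1'+c_2'$ versus $c_1+c_2\neq c_1'+c_2'$, so the skeleton is right. But both branches, as written, have genuine gaps.

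In the $k\neq k'$ case your ``boundary'' resolution does not work. Knowing residue classes of $(k'-k)x$ or of $\alpha(x)$ modulo $k'-k$ tells you nothing about which side of an order inequality you land on; residues do not decide order within an interval of width $|q-q'|\cdot a$. The paper's move here is different and is the missing idea: apply $\alpha$ to \emph{both sides} of the original inequality. By Remark~\ref{alphafunction} this replaces every $x$ by $\alpha(x)$ up to a bounded $\epsilon(a)$, and then the whole comparison lives in $M_a^*$, where it reduces to $k(\alpha(x))\lessgtr m_a$ for some $m_a\in M_a^*(M_0)$. Since $x\notin M_0$ forces $\alpha(x)$ to be infinitely far (in units of $a$) from every $M_a^*(M_0)$-element, the $\epsilon(a)$ is harmless, and the order type of $\alpha(x)$ (which the $b$-property transfers) decides the inequality outright. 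No residue argument is needed.

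In the $k=k'$ case you are comparing $c\delta$ with a distance $d$ coming from $M_0$, where $c=|q-q'|$. You invoke Theorem~\ref{Axiom 12} but skip the actual obstacle: $d$ need not be divisible by $c$, so $\delta<d/c$ is not a priori a comparison with a definable distance in $M_0$. The paper fixes this by first shifting $d$ by $k(\beta_1-\alpha_1)$ for the unique $0\le k<c$ making $d+k(\beta_1-\alpha_1)$ divisible by $c$ (this is where Theorem~\ref{Axiom 12} enters), and then observing that because $x\notin M_0$, the inequality $\delta<d/c$ is unaffected by a shift of at most $(\beta_1-\alpha_1)$; only then does the $b$-property apply. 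Finally, you omit the case $m,m'\in M_0\setminus\mathbb{U}(M_0)$, which the paper reduces separately by adding $ab$ to pull the difference back into $\mathbb{U}(M_0)$.
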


\begin{proof}
    Since $\tp^\model(x/M_0)\big\vert_<=\tp^\nodel(y/f(M_0))\big\vert_<$, we also have $\tp^\model(\alpha(x),M_0)\big\vert_<=\tp^\nodel(\alpha(y),f(M_0))\big\vert_<$. 

The same argument as that of Remark \ref{q_0 en M_a y M_b} shows that 
$\model\models \alpha(x)<m$ for $m\in M_0$ if and only if $\model\models \alpha(x)+k(a)<m$ for all $k\in \N$, and that $\model\models x<m$ for $m\in M_0$ if and only if $\model\models x+k(b)<m$ for all $k\in \N$.

The analogous result holds of course for $\nodel$, since $f$ being an embedding implies that $y\not\in f(M_0)$. This implies that $\model \models \alpha(x)\leq m$ if and only if $\model \models \alpha(x)< m$.

   \medskip

Now, we need to show that 
\[
\model \models c_1(x)+c_2(\alpha(x))+t<c_1'(x)+c_2'(\alpha(x))+t'
\]
if and only if 
\[
\nodel \models c_1(y)+c_2(\alpha(y))+f(t)<c_1'(y)+c_2'(\alpha(y))+f(t')
\]
for $c_1, c_2, c_1', c_2'\in \mathbb Z$ and $t, t'\in M_0$. 

\medskip

We will first prove the result for $t,t'\in \mathbb U(M_0)$, so $t=n(ab)+t_a+t_b$ and $t'=n'(ab)+t_a'+t_b'$.

    Assume towards a contradiction that
    $$\model\models  c_1(x)+c_2(\alpha(x))+n(ab)+t_a+t_b<c_1'(x)+c_2'(\alpha(x))+n'(ab)+t_a'+t_b'$$ and $$\nodel\models c_1(y)+c_2(\alpha(y))+n(ab)+f(t_a)+f(t_b) > c_1'(y)+c_2'(\alpha(y))+n'(ab)+f(t_a')+f(t_b').$$
    for some $t_a,t_a'\in M_a(M_0),t_b,t_b'\in M_b(M_0), n, n',c_1,c_1',c_2,c_2'\in\N$. Applying the $\alpha$ function to both sides, and by Remark \ref{alphafunction}, we get
    $$\model\models c_1(\alpha(x))+c_2(\alpha(x))+n(ab)+t_a+\alpha(t_b)<c_1'(\alpha(x))+c_2'(\alpha(x))+n'(ab)+t_a'+\alpha(t_b')+\epsilon_x (a)$$ and
     $$\nodel\models c_1(\alpha(y))+c_2(\alpha(y))+n(ab)+f(t_a)+\alpha(f(t_b))> c_1'(\alpha(y))+c_2'(\alpha(y))+n'(ab)+f(t_a')+\alpha(f(t_b'))+\epsilon_y (a)$$ where $\epsilon_x$ and $\epsilon_y$ are elements in $\mathbb Z$. We now need to proceed by (sub)cases. 

\medskip

    If $c_1+c_2\neq c_1'+c_2'$, the terms involving $\alpha(x)$ can be grouped to one side without canceling out, and we can reduce the $\mathcal M$ equation to either $k(\alpha(x))<m_a$ or $k(\alpha(x))> m_a$, for some $m_a \in M_a^*(M_0)$ (the $\epsilon_x(a)$ term can be ignored by the initial observation); the $\mathcal N$ equation will reduce, respectively to $k(\alpha(y))>f(m_a)$ or $k(\alpha(y))< f(m_a)$. In either case, this contradicts the fact that the order type of $\alpha(x)$ over $M_0$ is that of $\alpha(y)$ over $f(M_0)$. 

\medskip
    
    Now, the other case is when all the $\alpha(x)$ terms cancel out (so that $c_1+c_2=c_1'+c_2'$), and we either have $c_1=c_1'$ and $c_2=c_2'$, or $c_1\neq c_1'$ and $c_2\neq c_2'$. In the first case, the result follows trivially by cancellation laws and the fact that $f$ is an embedding. In the second case, since $c_1-c_1'=c_2'-c_2$, if we set $c:=c_1-c_1'$ or $c:=c_1'-c_1$ (whichever is positive), and set all the $x$ terms to one side, the inequality 
    \[
    c_1(x)+c_2(\alpha(x))+n(ab)+t_a+t_b<c_1'(x)+c_2'(\alpha(x))+n'(ab)+t_a'+t_b'
    \]
    which we assumed is satisfied in $\model$ can be rewritten as 
    \[
    c(x)+n(ab)+t_a+t_b < c(\alpha(x))+n'(ab)+t_a'+t_b'    
    \]
    (or with the inequality sign flipped), which is equivalent to
    \[
    c(x-\alpha(x))<n'(ab)+t_a'+t_b'-(n(ab)+t_a+t_b)
    \]
    For simplicity, set $d:= n'(ab)+t_a'+t_b'-(n(ab)+t_a+t_b)$. Now, take $k\in\N$, with $k<c$, such that $d+k(\beta_1-\alpha_1)$ is divisible by $c$ (formally, the distance $d+k(\beta_1-\alpha_1)$ can be written as the difference of two elements in $M_0$, both having the same $R$-residue modulo $c$). We get that
    \[
    \model\models x-\alpha(x)<\frac{d+k(\beta_1-\alpha_1)}{c}
    \]
    Notice that we must also have that
    \[
    \model\models x-\alpha(x)<\frac{d+k(\beta_1-\alpha_1)-c(\beta_1-\alpha_1)}{c}
    \]
     since that is the next preceding definable distance divisible by $c$, which is realized as the difference of two elements in $M_0$ by Theorem \ref{Axiom 12}. Doing the same steps for the inequality in $\nodel$, we get that
    \[
    \nodel\models y-\alpha(y)>\frac{d-(c+k)(\beta_1-\alpha_1)}{c}
    \]
    which contradicts the fact that $y$ is a witness of the $b$-property for $x$. This concludes this case.

\medskip
    
    The last non-trivial case is when
    $$\model\models c_1(x)+c_2(\alpha(x))+t_1<c_1'(x)+c_2'(\alpha(x))+t_2$$
    where $t_1,t_2\in M_0\setminus\mathbb{U}(M_0)$ (if $t_2\in M_0\setminus\mathbb{U}(M_0)$ and $t_1\in \mathbb{U}(M_0)$, then $f(t_1)<f(t_2)$ follows by definition). 
    
    If $t_2-t_1$ or $t_1-t_2$ is an element of $\mathbb{U}(M_0)$, this case reduces to the previous case. If $t_2-t_1$ exists but is not in $\mathbb{U}(M_0)$, then the corresponding inequality holds in $\nodel$ since $\nodel\models f(t_2-t_1)>m(ab)$ for every $m\in\N$. The only other subcase is when $t_1-t_2$ nor $t_2-t_1$ exist. If $t_1>t_2$ but $t_1-t_2$ doesn't exist, we get
    $$\model\models c_1(x)+c_2(\alpha(x))+(ab+t_1-t_2)<ab+c_1'(x)+c_2'(\alpha(x))$$
    (where $ab+t_1-t_2$ exists since their difference is greater than $ab$) and the case reduces to the initial case as well. The remaining cases follow the same reasoning, and the result is proved.
\end{proof}

We are now ready to prove that the $b$-property implies we can extend an embedding to a structure including a new element $x\in M_b$.

\begin{lemma} \label{localiso}
    Let $\model,\nodel\models T_{lons}$ with $\nodel$ saturated, $M_0\leq \model$ be an $\hat{\lang}$-substructure, $f\colon M_0\to \nodel$ an embedding, $x\in M_b(\model)\setminus M_0$ and let $y\in \nodel$ be a corresponding witness of the $b$-property for $x$. Then, the function $f'\colon=f\cup\{(x,y)\}$ extends naturally to an embedding $f'\colon \langle x, M_0\rangle_{\hat{\lang}}\to \nodel$.
\end{lemma}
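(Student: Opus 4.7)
The plan is to first describe the closure $\langle x, M_0\rangle_{\hat{\lang}}$ explicitly using Lemma \ref{fractions}, then build $f'$ stage by stage, and finally verify that it respects all the structure. By Lemma \ref{fractions}, we may identify $M_b\langle x, M_0\rangle_{\hat{\lang}} = M_b\langle x, M_0\rangle_{\{+,-,\frac{\cdot}{n}\}}$ and $M_a\langle x, M_0\rangle_{\hat{\lang}} = M_a\langle \alpha(x), M_0\rangle_{\{+,-,\frac{\cdot}{n}\}}$, and every other element of $\langle x, M_0\rangle_{\hat{\lang}}$ is either of the form $n(ab)+t_a+t_b$ with $t_a\in M_a\langle x, M_0\rangle$, $t_b\in M_b\langle x, M_0\rangle$, or of the form $t_0+u_1-u_2$ with $t_0\in M_0\setminus\mathbb{U}(M_0)$ and $u_1,u_2$ in $\mathbb{U}\langle x, M_0\rangle$.

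The first step is to define $f'$ on $M_b\langle x, M_0\rangle$ and $M_a\langle x, M_0\rangle$. On $M_b$-elements, set $f'(x)=y$ and send a representative $\frac{c(x)+m}{n}$ (with $c\in\mathbb{Z}$, $m\in M_b(M_0)$) to $\frac{c(y)+f(m)}{n}$, and analogously on $M_a$ using $f'(\alpha(x))=\alpha(y)$. For this to make sense I must show that the relevant quotients exist in $\mathcal{N}$ and that the assignment is well defined, i.e.\ that any relation $\frac{c_1(x)+m_1}{n_1}=\frac{c_2(x)+m_2}{n_2}$ holding in $\model$ still holds after pushing forward. Existence of the quotients follows because the $b$-property forces the $M_b$- and $R$-residue types of $x$ and $y$ (and of $\alpha(x)$ and $\alpha(y)$) to agree, so the divisibility predicates $R_{n,0}^b$ and $R_{n,0}^a$ needed for the divisions are preserved. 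Well-definedness then reduces to the statement that every linear equality between integer combinations of $x$, $\alpha(x)$ and $M_0$-elements is preserved by $f'$; this is the equality case of Lemma \ref{inequalities}, applied in both directions.

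Next I would extend $f'$ to $\mathbb{U}\langle x, M_0\rangle$ by $f'(n(ab)+t_a+t_b)=n(ab)+f'(t_a)+f'(t_b)$ (uniqueness of decomposition makes this unambiguous), and then to the remaining elements via $f'(t_0+u_1-u_2)=f(t_0)+f'(u_1)-f'(u_2)$, checking that the existence of $u_1-u_2$ in $\nodel$ is forced by the matching order type (again Lemma \ref{inequalities}). With $f'$ defined, I must verify it is an embedding: preservation of the order on all of $\langle x, M_0\rangle$ is precisely Lemma \ref{inequalities}; preservation of $R_{n,r}$, $R_{n,r}^a$, $R_{n,r}^b$ on the generators $x,\alpha(x)$ is given by the $b$-property, and propagates to combinations via Proposition \ref{Predicates Behavior}; preservation of $\pi_a,\pi_b$ follows directly from the unique decomposition definition of $f'$; preservation of $\alpha$ and $\beta$ follows from Remark \ref{alphafunction}, because the ``$+\epsilon a$'' correction depends only on which side of the next multiple of $a$ each $M_b$-component lies, and this is an order comparison preserved by $f'$.

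The main obstacle, I expect, is the combination of well-definedness on $M_b\langle x, M_0\rangle$ (and similarly on $M_a\langle \alpha(x), M_0\rangle$) with the correct computation of $\alpha$ on sums such as $x+m_b$ for $m_b\in M_b(M_0)$: one must see that the integer $\epsilon\in\{0,1\}$ produced by Remark \ref{alphafunction} is the same for $x+m_b$ in $\model$ as for $y+f(m_b)$ in $\nodel$. This reduces to the inequality $x-\alpha(x)+m_b-\alpha(m_b) < a$ being transported faithfully by $f'$, which is exactly the order-type clause in the $b$-property, once one rewrites the relevant definable distance less than $a$ in terms of $m_b-\alpha(m_b)$ via Theorem \ref{Axiom 12}. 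Once this is handled, the extension is an embedding, completing the proof.
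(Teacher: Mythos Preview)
Your proposal is correct and follows essentially the same route as the paper: use Lemma \ref{fractions} to get explicit normal forms for elements of $\langle x, M_0\rangle_{\hat{\lang}}$, define $f'$ on those forms using the matching residue data from the $b$-property (so the needed divisions exist), invoke Lemma \ref{inequalities} for the order and for well-definedness, and reduce preservation of $\alpha,\beta,\pi_a,\pi_b$ to the $\epsilon a$ correction of Remark \ref{alphafunction}, which in turn is controlled by the order-type clause of the $b$-property. The only organizational difference is that the paper works directly with the combined representative $\frac{n(x)+m(\alpha(x))+u_0-u_1}{r}$ rather than building $f'$ on $M_a$ and $M_b$ separately and then assembling via unique decomposition; in particular the paper spells out the $\alpha\bigl(\frac{\cdot}{r}\bigr)$ case in more detail than your ``main obstacle'' paragraph, but your reduction to an inequality of the form $x-\alpha(x)<\ell$ with $\ell$ a definable distance in $M_0$ is exactly the mechanism used there.
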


\begin{proof}
    Using Lemma \ref{fractions}, it follows by induction that any element $t\in \mathbb{U}\langle x,M_0\rangle_{\hat{\lang}}$ can be written as $t=\frac{n(x)+m(\alpha(x))+u_0-u_1}{r}$, for some $n,m\in \Z$, and some $u_0,u_1 \in \mathbb{U}(M_0)$. Notice that if $t=\frac{n(x)+m(\alpha(x))+u_0-u_1}{r}=\frac{n'(x)+m'(\alpha(x))+u_0'-u_1'}{r'}$, we must have $r'n(x)+r'm(\alpha(x))+r'(u_0)-r'(u_1)=rn'(x)+rm'(\alpha(x))+r(u_0')-r(u_1')$, and by unique decomposition and the fact that $x\not\in M_b(M_0)$ and $\alpha(x)\not\in M_a(M_0)$, we get that $r'n=rn'$ and $r'm=rm'$, so that $\frac{n}{r}=\frac{n'}{r'}$, $\frac{m}{r}=\frac{m'}{r'}$, and $r'(u_0-u_1)=r(u_0'-u_1')$. So if we define the function $f'$ by $f'(\frac{n(x)+m(\alpha(x))+u_0-u_1}{r}):=\frac{n(y)+m(\alpha(y))+f(u_0)-f(u_1)}{r}$ there would be no ambiguity. Also, notice that since $f$ is an embedding, $x$ and $y$ have the same $b$ and $R$-residue type, and $\alpha(x)$ and $\alpha(y)$ have the same $a$ and $R$-residue type, the fraction $\frac{n(y)+m(\alpha(y))+f(u_0)-f(u_1)}{r}$ exists, so $f'$ is well-defined. 
    
    If $t\not\in\mathbb{U}\langle{x,M_0}\rangle_{\hat{\lang}}$, using Lemma \ref{fractions} again, an easy proof by induction shows that it can be written as $t=\frac{t_0+n(x)+m(\alpha(x))}{r}$ for some $t_0\in M_0\setminus \mathbb{U}(M_0)$, and some $n,m\in \Z$. Define $f'$ on such elements as $f'(\frac{t_0+n(x)+m(\alpha(x))}{r}):=\frac{f(t_0)+n(y)+m(\alpha(y))}{r}$. The proof that $f'$ is well defined in this case is argued almost in the same way as the previous case. 
    
\medskip

    We now check that $f'$ is indeed an $\hat{\lang}$-embedding. We need to check that $f'$ preserves all element relations and functions in $\hat{\lang}$ (constants are done since they are in $M_0$). 
    \begin{enumerate}[start=0]
    \item That $f'$ preserves +, -  and $\frac{\cdot}{n}$ follows from the definition.
      
        \item That $f'$ preserves all inequalities follows from Lemma \ref{inequalities} (and the axioms of semigroups).       
        
        \item That $f'$ preserves all residue predicates follows from the fact that $x$ and $y$ have the same $b$ and $R$-residue type, that $\alpha(x)$ and $\alpha(y)$ have the same $a$ and $R$-residue type, that residue predicates are compatible with the sum and difference, and the fact that $M_a\langle \alpha(x),M_0\rangle_{\{+,-,\frac{\cdot}{n}\}}\cong M_a\langle \alpha(y),f(M_0)\rangle_{\{+,-,\frac{\cdot}{n}\}}$, and $M_b\langle x,M_0\rangle_{\{+,-,\frac{\cdot}{n}\}}\cong M_b\langle y,f(M_0)\rangle_{\{+,-,\frac{\cdot}{n}\}}$.

        \item $f'(\alpha(\frac{n(x)+m(\alpha(x))+u_0-u_1}{r}))=\alpha(\frac{n(y)+m(\alpha(y))+f(u_0)-f(u_1)}{r}))$: Any element $x$ with unique decomposition $x=m(ab)+m_a+m_b$ is divisible by $n$ if and only if there is a non-negative integer $k\leq m$ such that $k(ab)+m_a$ and $(m-k)(ab)+m_b$ both are divisible by $n$. Since $n(x)+m(\alpha(x))+u_0-u_1$ is divisible by $r$, by grouping the $a$-component and $b$-component of $u_0$ and $u_1$, we can write $\frac{n(x)+m(\alpha(x))+u_0-u_1}{r} = \frac{k(ab)+m(\alpha(x))+ m_a}{r}+\frac{k'(ab)+n(x)+ m_b}{r}$, for some $m_a\in M_a(M_0),m_b\in M_b(M_0)$, and some integers $k,k'$, such that each of the fractions exist. Now, $\alpha(\frac{k(ab)+m(\alpha(x))+m_a}{r}+\frac{k'(ab)+n(x)+ m_b}{r})=\frac{k(ab)+m(\alpha(x))+ m_a}{r}+\alpha(\frac{k'(ab)+n(x)+ m_b}{r})$. We first show that $f'(\alpha(t(x)))=\alpha(t(y))$ for any $t\in\mathbb{N}$. We have that $\alpha(t(x))=t(\alpha(x))+c(a)$ if and only if $(c+1)(a)>t(x)-t(\alpha(x))>c(a)$, if and only if $\frac{(c+1)(a)}{t}>x-\alpha(x)>\frac{c(a)}{t}$. Since $y$ is a witness of the $b$-property for $x$, we must have $\frac{(c+1)(a)}{t}>y-\alpha(y)>\frac{c(a)}{t}$, so $\alpha(t(y))=t(\alpha(t))+c(a)$. Hence, $f'(\alpha(t(x)))=\alpha(t(y))$. Now, notice that for any $m_b$ multiple of $b$ divisible by $r$, we have that $\alpha(\frac{m_b}{r})=\frac{\alpha(m_b)+i(a)}{r}$, where $0\leq i < r$ is such that $R^a_{r,r-i}(\alpha(m_b))$ holds, since $\frac{\alpha(m_b)+i(a)}{r}-\frac{m_b}{r}<\frac{a+i(a)}{r}\leq a$. Therefore, $\alpha(\frac{k'(ab)+n(x)+ m_b}{r})=\frac{\alpha(k'(ab)+n(x)+m_b)+i(a)}{r}$, where $0\leq i < r$ is such that $R_{r,n-i}^a(\alpha(k'(ab)+n(x)+m_b))$ holds. Now, given any $m_b\in M_b^*(M_0)$, and any integer $t$, we have that $\alpha(t(x)\pm m_b)=\alpha(t(x))\pm \alpha(m_b) +c(a)$ (for some integer $c$) if and only if $(c+1)(a)>t(x)\pm m_b - (\alpha(t(x))\pm \alpha(m_b))>c(a)$ if and only if $(c+1)(a)>t(x-\alpha(x))+c'(a)\mp \alpha(m_b) \pm m_b>c(a)$ (where $\alpha(t(x))=t(\alpha(x))+c'(a)$), if and only if $(c+1-c')(a)\pm \alpha(m_b)\mp m_b>t(x-\alpha(x))>(c-c')(a)\pm \alpha(m_b)\mp m_b$ if and only if $\frac{(c+1-c')(a)\pm \alpha(m_b)\mp m_b}{t}>x-\alpha(x)>\frac{(c-c')(a)\pm \alpha(m_b)\mp m_b}{t}$ (the case $t<0$ is done analogously). Since $x-\alpha(x)$ and $y-\alpha(y)$ have the same order type over every realizable distance less than $a$ definable in $M_0$, $y-\alpha(y)$ also satisfies such inequality, and we get that $f'(\alpha(t(x)\pm m_b))=\alpha(t(y)\pm f(m_b))$. Lastly, we have that $f'(\alpha(\frac{k'(ab)+n(x)+m_b}{r}))=f'(\frac{\alpha(k'(ab)+n(x)+m_b)+i(a)}{r})=\frac{f'(\alpha(k'(ab)+n(x)+m_b)+i(a))}{r}=\frac{\alpha(k'(ab)+n(y)+f(m_b))+i(a)}{r}=\alpha(f'(\frac{k'(ab)+n(x)+m_b}{r}))$.
        \item $f'(\beta(\frac{n(x)+m(\alpha(x))+u_0-u_1}{r})=\beta(\frac{n(y)+m(\alpha(y))+f(u_0)-f(u_1)}{r})$: The proof is quite similar to the proof of 3, so we will not include it.
        \item Compatibility with the projection functions $\pi_a,\pi_b$ is straightforward to check from the definition of $\pi_a,\pi_b$ and $f'$, and the fact that $f$ is an $\hat{\lang}$-embedding.
    \end{enumerate}
\end{proof}

\begin{rem}\label{localiso-others}
    Using the ideas of the previous lemma, one can also extend the embedding $f:M_0\to N$ to include a new element $x\not\in\mathbb{U}(\model)$ simply by mapping $x$ to some $y\in \nodel$ with the same $R$-residue type as $x$, and same order type (over $f(M_0)$), which exists by Proposition \ref{t-prop}. The embedding can also be extended to include a new element $x\in M_a(M_0)$ by mapping $x$ to an element $y$ whose quantifier-free type over $f(M_0)$ is that of $x$ over $M_0$, which exists by Proposition \ref{a-prop}, and also follows a very similar proof. 
\end{rem}

\subsubsection{Proof of quantifier elimination}

\begin{theo}\label{qe}
    Let $T\supseteq T_{lons}$ be a $\lang_{ons}$-theory with the $b$-property. Then, $T$ has quantifier elimination.
\end{theo}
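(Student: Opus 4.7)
The plan is to verify the quantifier elimination criterion of Fact \ref{qefact}. I would fix $\mathcal{M},\mathcal{N}\models T$ with $\mathcal{N}$ $\omega$-saturated, a finite tuple $\bar{a}\subseteq\mathcal{M}$, an $\hat{\mathcal{L}}$-embedding $f\colon M_0:=\langle\bar a\rangle^{\hat{\mathcal{L}}}\to\mathcal{N}$, and an element $x\in\mathcal{M}$; the goal is to find $y\in\mathcal{N}$ with $\qftp^{\mathcal{M}}(\bar a,x)=\qftp^{\mathcal{N}}(f(\bar a),y)$. Producing such a $y$ is equivalent to extending $f$ to an $\hat{\mathcal{L}}$-embedding defined on $\langle M_0,x\rangle^{\hat{\mathcal{L}}}$ and taking $y$ to be the image of $x$ under the extension, and this is what I would build.

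The proof proceeds by a case analysis on the position of $x$, invoking the extension tools already established. First, if $x\notin \mathbb{U}(\mathcal{M})$, then $x$ introduces no new multiples of $a$ or $b$, and Proposition \ref{t-prop} together with the variant in Remark \ref{localiso-others} yields the extension directly. Second, if $x\in M_b(\mathcal{M})\setminus M_0$, the $b$-property furnishes a witness $y\in M_b(\mathcal{N})$, and Lemma \ref{localiso} promotes $f\cup\{(x,y)\}$ to an $\hat{\mathcal{L}}$-embedding of $\langle M_0,x\rangle^{\hat{\mathcal{L}}}$. Third, if $x\in M_a(\mathcal{M})\setminus M_0$, then when $q_0=0$ Proposition \ref{a-prop} and Remark \ref{localiso-others} apply directly; when $q_0\neq 0$, Remark \ref{q_0 en M_a y M_b} forces $\beta(x)\in M_b(\mathcal{M})\setminus M_0$, so one first extends by $\beta(x)$ via the $b$-property and Lemma \ref{localiso}, after which $x$ is pinned down as one of the finitely many multiples of $a$ lying between $\alpha(\beta(x)-b)$ and $\alpha(\beta(x))$---both now in the enlarged base---with the correct choice determined by the quantifier-free data. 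Finally, a generic $x\in\mathbb{U}(\mathcal{M})$ has unique decomposition $x=n(ab)+\pi_a(x)+\pi_b(x)$, so extending by $x$ reduces to successively extending by $\pi_b(x)$ and then by $\pi_a(x)$ over the enlarged base, each step handled by one of the cases above.

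The main obstacle I anticipate is managing the composite case $x\in\mathbb{U}(\mathcal{M})\setminus(M_a\cup M_b)$: after enlarging the base by $\pi_b(x)$, one must check that the second extension (by $\pi_a(x)$) still meets the hypotheses of the relevant proposition, in particular that the $M_a$-residue type, the order type, and the compatibilities with $\alpha,\beta,\pi_a,\pi_b$ remain coherent with the already-fixed data on $\langle M_0,\pi_b(x)\rangle$. This, however, is exactly what the preservation results in Lemmas \ref{fractions} and \ref{inequalities} guarantee, so the final proof amounts to a careful assembly of the single-element extensions, iterated finitely many times to close $\langle M_0,x\rangle^{\hat{\mathcal{L}}}$ under the $\hat{\mathcal{L}}$-operations.
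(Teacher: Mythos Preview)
Your approach and the paper's use the same criterion (Fact \ref{qefact}) and the same extension tools---Lemma \ref{localiso} for $M_b$-elements, Proposition \ref{a-prop} and Remark \ref{localiso-others} for $M_a$ and non-$\mathbb U$ elements---so the skeletons agree. The difference is in how the iteration is organised. The paper does not try to reach $\langle M_0,x\rangle$ in finitely many single-element steps. Instead (taking $\mathcal M$ countable) it enumerates \emph{all} of $M_b(\langle M_0,m\rangle)$ as a sequence $\{x_n\}_{n\in\omega}$ and extends by each $x_n$ in turn via the $b$-property and Lemma \ref{localiso}, obtaining $M_1$ with $M_b(M_1)=M_b(\langle M_0,m\rangle)$. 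Only after $M_b$ has been saturated inside $\langle M_0,m\rangle$ does it adjoin the new $M_a$ elements; at that stage the hypothesis of Proposition \ref{a-prop} (no new $M_b$ elements are introduced) holds automatically, since everything stays inside $\langle M_0,m\rangle$.

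The phrase ``iterated finitely many times'' is where your argument is incomplete. After you extend by $\pi_b(x)$, adding $\pi_a(x)$ can introduce $\beta(\pi_a(x))$ and further $M_b$ elements not in the enlarged base, so Proposition \ref{a-prop} does not apply directly; you then need to extend by those via the $b$-property, which may in turn create new $M_a$ elements whose $\beta$-images are again new, and so on. To conclude that this stabilises after finitely many rounds you would need a two-element analogue of Lemma \ref{fractions}---something like $M_b\langle m_a,m_b,M_0\rangle_{\hat{\mathcal L}}=M_b\langle \beta(m_a),m_b,M_0\rangle_{\{+,-,\frac{\cdot}{n}\}}$---which neither the paper nor your sketch provides (Lemma \ref{fractions} is stated only for a single $M_b$ element). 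The paper's countable-enumeration strategy sidesteps this finiteness question entirely; replacing your finite iteration with that enumeration closes the gap without altering the rest of your argument.
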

\begin{proof}
    We use the characterization of a theory having quantifier elimination as stated in Fact \ref{qefact}. Let $\model,\nodel\models T$ with $\model$ countable, $\nodel$ saturated, let $f_0:M_0\subseteq \model\to \nodel$ be an embedding, and let $m\in\model$. 
    
    Let $\{x_n\}_{n\in\omega}$ be a sequence of elements in $M_b(\langle M_0, m\rangle)$ such that 
    \[M_b\left(\left\langle M_0,x_1, x_2\dots\right\rangle_{\hat{\lang}}\right)=M_b(\langle M_0, m\rangle)_{\hat{\lang}}.\]
     We extend $f_0$ iteratively so that its domain is $M_1:=\langle M_0,x_1, x_2\dots\rangle_{\hat{\lang}}$ as follows: Suppose $f_{k}:\langle M_0,x_1\dots,x_k\rangle_{\hat{\lang}}\to\nodel$ is an embedding. Let $y_{k+1}\in\nodel$ be a corresponding witness of the $b$-property for $x_{k+1}$. By Lemma \ref{localiso}, if we define $f_k(x_{k+1})=y_{k+1}$, $f_k$ extends naturally to an $\hat{\lang}$-embedding $f_{k+1}:\langle M_0,x_1\dots,x_k,x_{k+1}\rangle\to\nodel$. We continue iteratively this process, and take $\hat{f_1}=\cup_{i=1}^\infty f_i$. Clearly, such $\hat{f_1}:\langle M_0,x_1,\dots\rangle_{\hat{\lang}}\to\nodel$ will be an embedding. 

\medskip

    Next, let $\{y_n\}_{n\in\omega}$ be a sequence of elements in $M_a\left(\left\langle M_1, m\right\rangle\right)\setminus \langle M_1, m\rangle$ such that 
    \[M_a\left(\left\langle M_1,y_1, y_2\dots\right\rangle_{\hat{\lang}}\right)=M_a(\langle M_1,m\rangle)_{\hat{\lang}}.\] 

    Since $ M_1\subset \left\langle M_0, m\right\rangle$ and  \[M_b\left(\left\langle M_0,x_1, x_2\dots\right\rangle_{\hat{\lang}}\right)=M_b(\langle M_0, m\rangle)_{\hat{\lang}},\] we have that for every $i$
    \[M_b\left(\left\langle M_1,y_1, \dots, y_i\right\rangle_{\hat{\lang}}\right)=M_b\left(\left\langle M_1,y_1, \dots, y_i, y_{i+1}\right\rangle_{\hat{\lang}}\right),\] Remark \ref{localiso-others} implies that $\hat{f_1}$ can be increased at each stage, so we get an embedding $\hat{f_2}$ from $M_2:=\left\langle M_1,y_1, \dots, y_i\right\rangle_{\hat{\lang}}$ to $\mathcal N$.
    
    If $m\not\in M_2$, we must have 
    \[\mathbb{U}\left(\left\langle M_2,m\right\rangle\right)=\mathbb{U}\left(M_2\right).\]  Again by Remark \ref{localiso-others}, there is an $\hat{\lang}$-embedding $\hat{f}$ extending $\hat{f_2}$ to a domain including $m$. By Fact \ref{qefact}, $T$ has quantifier elimination. 
\end{proof}

\section{Some complete theories of limit 2-semigroups}\label{Section: conclusions}
As mentioned at the end of Section \ref{Section: model theoretic}, completeness (or even consistency) of the theories given \emph{any} combination of invariants is beyond what we can prove in this paper. But we will prove one special case.

\begin{dfn}
 Let $\langle a_i, b_i\rangle_{i\in \N}$ 
be a sequence of relatively prime natural numbers with $a_i<b_i$. We will say that \emph{the residue of $b_i$ modulo $a_i$ is constant and equal to $n/m$} with $m,n\in \N$ if and only if for every pair $a_i, b_i$ we have $mb_i\equiv_{a_i} n$. We will require that $(m,a_i)=1$ for cofinitely many $a_i$, and that $\lim_{i\rightarrow \infty} a_i=\infty$.     
\end{dfn}

In this section we will prove the following:

\begin{theorem}\label{rational b}
    Let $\langle a_i, b_i\rangle_{i\in \N}$ be a sequence of relatively prime natural numbers with $a_i<b_i$ and such that the residue of $b_i$ modulo $a_i$ is constant and equal to $m/n$. 
    
    Assume that for every prime $p$ there is some $n$ such that the set $\{i \mid p^n|a_i\}$ is finite (we will refer to this condition as ``no prime divides $a$ infinitely many times'').
    
    Let $\U$ be a non principal ultrafilter over $\N$ and $S_i$ the numerical semigroup generated by $a_i, b_i$. Then the theory of $\prod S_i/\U$ is completely determined by:
    \begin{itemize}
        \item The axioms in Definition \ref{dfn axioms}.
        \item The residues realized by $a$.
        \item $m(b)=\beta_n$.
        \item The quotient $q_0$. 
        \item If $q_0=0$, the list of $M_a$-residues of $\alpha(b)$.
    \end{itemize} 
\end{theorem}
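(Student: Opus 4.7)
\emph{Plan.} The strategy is to apply Fact \ref{Marker} in the form ``quantifier elimination $+$ prime model $\Rightarrow$ completeness.'' Thus the proof breaks into two parts: (i) verify the $b$-property for the theory $T$ specified by the listed axioms so that Theorem \ref{qe} applies and $T$ has quantifier elimination, and (ii) observe that the construction of Section \ref{Subsection Prime Models} produces a prime model of $T$ under the hypothesis $m(b)=\beta_n$.

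\emph{Verifying the $b$-property.} Let $\mathcal{M},\mathcal{N}\models T$ with $\mathcal{N}$ $\omega$-saturated, let $M_0\leq \mathcal{M}$, let $f\colon M_0\to \mathcal{N}$ be an embedding, and suppose $x\in M_b(\mathcal{M})$ is given. Because no prime divides $a$ infinitely many times, Observation \ref{finite primes b-property} reduces the task to finding $y\in M_b(\mathcal{N})$ realizing the image under $f$ of an arbitrary reduced system of equations satisfied by $x$. Following the reformulation at the end of Section \ref{Section: model theoretic}, after identifying $M_b$ with $\mathrm{res}_a$ via division by $b$, a reduced system over $M_0$ becomes a system in the structure $(\mathrm{res}_a, +_a, <_a, \lambda)$, where $\lambda$ is multiplication by the residue of $b$ modulo $a$. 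The hypothesis $m\cdot b \equiv_a n$ with $(m,a)=1$ means exactly that $\lambda(w)$ is characterized by the quantifier-free Presburger condition $m\cdot \lambda(w)=_a n\cdot w$, so $\lambda$ is $\emptyset$-interpretable inside $(\mathrm{res}_a,+_a,<_a)$ itself. By Fact \ref{Garcia}, the quantifier-free type of any finite tuple in this bounded Presburger structure is determined by the residues of $a$, which are listed among the invariants. Therefore the realizability in $\mathcal N$ of the image of the reduced system follows from $\omega$-saturation together with the fact that the same system is realized in $\mathcal M$, giving the $b$-property. Theorem \ref{qe} then delivers quantifier elimination.

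\emph{Prime model and conclusion.} Under $m(b)=\beta_n$, Theorem \ref{Axiom 12} places $\beta_1 \in \langle b, ab\clos$, so the construction in Section \ref{Subsection Prime Models} applies verbatim: $M^0 = \bigcup_{n\in\N} n(ab) + M_a^0 + M_b^0$, with $M_b^0 = \langle b, ab \clos$ explicitly isomorphic to $\mathcal Z_a$, is a substructure of $\mathcal M$ which is itself a model of $T$, and the natural map to any other model $\mathcal N$ of $T$ sending the constants to constants is an $\hat{\mathcal L}$-embedding (this is the content of the proposition and claim established there, using that once all listed invariants are fixed, the order, the $\alpha,\beta,\pi_a,\pi_b$ functions, and the residue predicates on $M^0$ are uniquely determined). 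Hence $M^0$ is a prime model of $T$, and Fact \ref{Marker} yields completeness.

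\emph{Main obstacle.} The technical crux is the second paragraph: replacing the general (potentially undecidable) question of axiomatizing scalar multiplication on $\mathrm{res}_a$ with the decidable Presburger question obtained from the \emph{rational} relation $m(b)=\beta_n$. This rationality is precisely what lets us avoid the general open problem flagged at the end of Section \ref{Section: model theoretic}, and the ``no prime divides $a$ infinitely often'' hypothesis is what allows the reduction from full systems to reduced systems, so both assumptions in the theorem are used essentially.
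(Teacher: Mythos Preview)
Your proposal is correct and follows essentially the same route as the paper: reduce the $b$-property to reduced systems via Observation \ref{finite primes b-property}, exploit $m(b)=\beta_n$ to make the scalar map $\lambda$ definable in bounded Presburger so that Fact \ref{Garcia} decides realizability from the quantifier-free type of the parameters, and then combine Theorem \ref{qe} with the prime-model construction of Section \ref{Subsection Prime Models} and Fact \ref{Marker}. The one step the paper spells out that you only allude to is Proposition \ref{invariants for rational b}, which checks that the short list of data in the statement actually pins down \emph{all} the invariants of Section \ref{Section invariants} (in particular $q_1$, $q_2$, and the residue types of $b$, $\alpha_1$, $\beta_1$); this is what licenses invoking the Section \ref{Subsection Prime Models} construction ``verbatim,'' so it would strengthen your write-up to cite it explicitly.
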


Notice that if $a_i, b_i$ all satisfy the conditions in the statement of Theorem \ref{rational b}, and $b:=\prod b_i/\U$ and $a:=\prod a_i/\U$, then $\prod S_i/\U$ satisfies the axioms in Definition \ref{dfn axioms}, and $m(b)=\beta_n$. So we only need to show that the set of consequences of the list of axioms above is a complete theory.

We will first proof that all the invariants specified in Section \ref{Section invariants} are determined. 

\begin{prop}\label{invariants for rational b}
    Let $\model$ be a limit $2$-semigroup such that $m(b)=\beta_n$ for some $m,n\in \N$ with $(\res_m(a),m)=1$. Then all the invariants in Section \ref{Section invariants} are determined by 
    the residues realized by $a$, the quotient $q_0$ and, if $q_0=0$, the $a$-residues of $\alpha(b)$.

    Furthermore, the only possible values of $q_0$ are $0$ or $m/k$ for some $k\in \N$.
\end{prop}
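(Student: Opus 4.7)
The plan is to split the argument into two parts: first, establishing the constraint $q_0\in\{0\}\cup\{m/k : k\in\N\}$; second, showing that every item in the Invariants box of Subsection~\ref{Invariants} is either part of the given data or derivable from it, using the structural equation $mb=\beta_n$.

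For the ``furthermore'' clause, I would unpack $mb=\beta_n$ using Remark~\ref{alphas and betas}: the equation says $mb-n=\alpha_n\in M_a$. If $q_0\neq 0$ then $\alpha(b)=k_0(a)$ for some finite $k_0\geq 1$, and writing $b=k_0(a)+(b-\alpha(b))$ and multiplying by $m$, the requirement that $mb-n$ be a multiple of $a$ forces an equation $m(b-\alpha(b))=l(a)+n$ for a unique $l\in\{0,1,\dots,m-1\}$. Uniqueness comes from the congruence $la\equiv -n\pmod m$, which has a unique solution in that range because $\gcd(m,a)=1$; the same hypothesis together with $\gcd(a,b)=1$ also forces $\gcd(n,a)=1$ via a short divisibility argument. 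Computing $r(a,b)$ directly from $b=((mk_0+l)a+n)/m$ then yields $q_0=m/(mk_0+l)$, which is of the claimed form.

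For the invariants themselves, the key observation is that $\beta_1-\alpha_1=1$ combined with $mb-\alpha(mb)=n$ yields $mb-\alpha(mb)=n(\beta_1-\alpha_1)$, so the equation ``$n'(b)-\alpha(n'(b))=m'(\beta_1-\alpha_1)$'' from the Invariants box is automatically satisfied with $n'=m$, $m'=n$, eliminating the need for $q_1$. Moreover, by Remark~\ref{alphas and betas} we can write $\beta_n=n\beta_1-p\cdot ab$ for some $p<n$, and substituting $\beta_n=mb$ gives $\beta_1=(m+pa)b/n$, where $p$ is the unique element of $\{0,\dots,n-1\}$ with $pa\equiv -m\pmod n$ (well-defined because $\gcd(n,a)=1$). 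This expresses $\beta_1$, and therefore $\alpha_1=\beta_1-1$, as $\hat{\mathcal L}$-terms in $a,b$, and a direct supremum computation of $r(\beta_1,ab)$ gives $q_2=p/n$.

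It then remains to determine the congruences of $b$ modulo each $p^k$. From $mb=m\alpha(b)+l(a)+n$ (with $l$ as above), the value of $mb$ modulo any prime power $p^K$ is determined by the residues of $a$ and of $\alpha(b)$: when $q_0\neq 0$ we have $\alpha(b)=k_0(a)$ with $k_0$ read off from $q_0$, and when $q_0=0$ the $M_a$-residues of $\alpha(b)$ are provided by the data. Extracting $b\bmod p^k$ from $mb\bmod p^{K}$ requires dividing by $m$, taking $K$ large enough to absorb the $p$-adic valuation of $m$. The main, and essentially only, delicate step is this division for primes $p$ dividing $m$; however, $\gcd(m,a)=1$ guarantees $p\nmid a$ in exactly these cases, so the residues of $a$ at arbitrary $p$-adic precision are part of the given data and the division goes through. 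With the congruences of $b$ in hand, the congruences of $\beta_1$, $\alpha_1$, and $\alpha(b)$ follow algebraically from the closed-form expressions derived above, completing the verification.
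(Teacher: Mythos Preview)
Your argument is correct and follows essentially the same route as the paper's proof: both derive the identity $m(b-\alpha(b))=t\,a+n$ (your $l$ is the paper's $t$), both express $n\beta_1=mb+(\text{multiple of }ab)$ to pin down $\beta_1$ and $q_2$, and both split into the cases $q_0=0$ and $q_0\neq 0$ to recover the congruences of $b$ from those of $a$ and $\alpha(b)$. The only cosmetic difference is that the paper computes $q_1=t/m$ and $q_2=l/n$ explicitly, whereas you observe that the hypothesis $m(b)=\beta_n$ already realizes the equation in the third bullet of the Invariants box, so $q_1$ and $q_2$ need not be recorded separately; your treatment of the division-by-$m$ step when extracting $\operatorname{res}_{p^k}(b)$ is also a bit more explicit than the paper's.
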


\begin{proof}
Suppose that $m(b)=\beta_n$. 

\medskip

$\alpha_n=\beta_n-n=\alpha(\beta_n)$ and we have $\alpha(m(b))=m(\alpha(b))+t(a)$ for some $t<m$.

So $n=\beta_n-\alpha(\beta_n)=m(b)-m(\alpha(b))-t(a)$ and
\[
n+t(a)=m(b-\alpha(b)).
\]
This implies that $t$ is the unique number less than $m$ such that $t(\res_m(a))\equiv_m -n$, which is completely determined by $\res_m(a)$. Also, the equality $m(b)-m(\alpha(b))=n+t(a)$ implies that $q_1=r(b-\alpha(b),a)=t/m$,  determines all the ratios $r(l(b)-\alpha(l(b)),a)$ and therefore determines the order types of  $\langle a, b, ab\rangle_{+, -, \frac{\cdot}{n}}$.

\medskip

On the other hand, $\beta_n-\alpha_n=n$, so by Theorem \ref{Axiom 12} if $\beta'=\beta_n+l(ab)$ and $\alpha'=\alpha_n+l(ab)$ with $l$ the unique number such that $\res^b_n(\beta_n+l(ab))=0$, then $n(\beta')-n(\alpha')=\beta_n-\alpha_n=n$. It follows that $n(\beta_1)=\beta_n+l(ab)$ and $n(\alpha_1)=\alpha_n+l(ab)$ with $l$ depending only on $\res_n(a)$ and $\res_n(b)$. So $n(\beta_1)=m(b)+l(ab)$ which implies that $q_2=r(\beta_1,ab)=l/n$ and, more generally, determines the complete order type of $\beta_1$ and $\alpha_1$ over $\langle a, ab\rangle_{+, - \frac{\cdot}{n}}$ and $\langle b, ab\rangle_{+, - \frac{\cdot}{n}}$, respectively.

\medskip

Finally, there are two cases. If $q_0\neq 0$ then $k(a)<m(b)<(k+1)(a)$ so $\alpha_n=k(a)$ which implies  $r(a,b)=m/k$ and $m(b)-k(a)=n$. So in this case, the residue type of $b$ is completely determined by the residue type of $a$. If $q_0=0$ then we do need to know the $M_a$-residue type of $\alpha(b)$. But once we know this, we know the residue type of $\alpha(b)$, and the residue type of $b$ (since $n+t(a)=m(b-\alpha(b))$), of $\beta_n=m(b)$, of $\alpha_n=\beta_n-n$ and then, since the residue type of $a$ and $b$ imply that of $ab$, we also have the residue type of $\beta_1$, since $n\beta_1=\beta_n+l(ab)$. 

\bigskip

This completes the proof of the proposition.
\end{proof}

We will now prove Theorem \ref{rational b} using Fact \ref{Marker}; namely, that any theory with prime models and quantifier elimination is complete. 

By Proposition \ref{invariants for rational b} we have all the invariants and by Subsection \ref{Subsection Prime Models}, $M^0:=\bigcup_{n\in \mathbb N} n(ab)+M^0_a+M^0_b$ where $M_a^0=\langle a, \alpha(b), ab\rangle_{+, - \frac{\cdot}{n}}$ and $M_b^0=\langle b, ab\rangle_{+, - \frac{\cdot}{n}}$ is a prime model. So, we are only left to prove that the theory of the ultraproduct of standard 2-semigroups $S_i$ with generators $a_i, b_i$, with rational residue of $b_i$ modulo $a_i$, and no prime dividing $a$ infinitely many times, has elimination of quantifiers.

\bigskip

So let $S$ be a limit $2$-semigroup such that 
\begin{itemize} 
    \item for any prime $p\in \N$ there is some $n$ such that $\neg R_{p^n, 0}(a)$ and 
    \item $\beta_n=m(b)$ for some $m,n\in \N$.
\end{itemize}

\begin{proof}(of Theorem \ref{rational b})    
   Let $a$ and $B$ be as in the statement of Theorem \ref{rational b}, and let $T$ be the theory implied by the axioms
    \begin{itemize}
        \item The axioms in Definition \ref{dfn axioms},
        \item The residues realized by $a$,
        \item $m(b)=\beta_n$,
        \item The quotient $q_0$, 
        \item If $q_0=0$, the list of $M_a$-residues of $\alpha(b)$.
    \end{itemize} 
    
    We need to show that $T$ is complete. By Theorem \ref{qe}, we just need to prove that any model of $T$ has the $b$-property. By Observation \ref{finite primes b-property} in Subsection \ref{subsection b-property} and our hypothesis, this amounts to showing that given models $\model, \nodel$ of $T$, a substructure $M_0$ of $\model$, and an embedding $f:M_0\rightarrow \nodel$, if a given reduced system of equations (as defined in Definition \ref{def: reduced system}) is realized by some element $x\in \model$, then the corresponding system of equations is realized in $\nodel$.

    Fix a reduced system of equations over some $M_0$. This is, 
    \begin{itemize}
   \item A (fixed) complete set of $M_b$-residues modulo $n$.
\item A complete set of residues modulo $n$ for $x-\alpha(x)$.
\item $a_1<x-\alpha(x)<a_2$.
\item $b_1< x<b_2$.
 \end{itemize}
 where $a_1, a_2$ are distances definable in $M_0$ and $b_1, b_2\in M_b(M_0)$.

 \medskip

 The model $\model$ is naturally a subset of some model of Presburger $\mathcal Z$. Let $\mathcal Z_a$ be the structure $([0,a), +_a, <)$ interpreted in $\mathcal Z$ with $a=a^{\model}$. 
 
 As in the discussion after Remark \ref{follows discussion}, if we identify $M_b(\model)$ with $\mathcal Z_a$, then the above system is realized in $\model$ whenever  there is some $x\in \mathcal Z_a$ satisfying:
 \begin{itemize}
 \item A (fixed) complete set of $M_b$-residues modulo $n$.
\item A complete set of residues modulo $n$ for $\lambda_b(x)$.
\item $a_1<\lambda_b(x)<a_2$.
\item $b_1< x<b_2$.
\end{itemize}
 where $\lambda_b(x)$ is (cyclic) multiplication in $\mathcal Z_a$ by the residue of $b=b^{\model}$ modulo $a=a^{\model}$.

 Now, $\beta_n=m(b)$ implies that the residue of $m(b)$ modulo $a$ is $n$, so that $\lambda_b$ is just multiplication by $n/m$ which is of course definable in  $\mathcal Z_a$. By Fact \ref{Garcia} the fact that this system is realized in $\mathcal Z_a$ is implied by the Presburger quantifier-free type of the tuple $a_1, a_2, b_1, b_2, a^\mathcal M$. Since $f$ is an embedding, the Presburger quantifier free type of the tuple $a_1, a_2, b_1, b_2, a^\mathcal M$ is the same as the Presburger quantifier free type of the tuple $f(a_1), f(a_2), f(b_1), f(b_2), a^{\nodel}$ which implies that any reduced system is realized in $\model$ if and only if the corresponding system is realized in $\nodel$, as required.
\end{proof}

\printbibliography

\end{document}